\providecommand{\url}[1]{\url{#1}}
\newcommand{\CC}{\mathbb{C}}
\newcommand{\RR}{\mathbb{R}}
\newcommand{\Tr}{\text{Tr}}
\newcommand{\eps}{\varepsilon}
\newcommand{\wtilde}{\widetilde}
\newcommand{\GT}{\mathrm{GT}}
\newcommand{\trig}{\text{trig}}
\newcommand{\FF}{\mathcal{F}}
\newcommand{\hpi}{\widehat{\pi}}
\newcommand{\Haar}{\mathsf{Haar}}
\newcommand{\EE}{\mathbb{E}}
\newcommand{\BB}{\mathcal{B}}
\newcommand{\Deltat}{\Delta^\text{trig}}
\newcommand{\wB}{\wtilde{\BB}}
\newcommand{\wF}{\wtilde{\FF}}
\newcommand{\wW}{\wtilde{C}^{\text{Wish}}}
\newcommand{\wBB}{\wtilde{C}^{\text{MVB}}}
\newcommand{\wH}{\wtilde{C}^{\text{HO}}}
\newcommand{\wJ}{\wtilde{C}^{\text{Jac}}}
\theoremstyle{definition}
\newtheorem{theorem}{Theorem}[section]
\newtheorem{corr}[theorem]{Corollary}
\newtheorem{lemma}[theorem]{Lemma}
\newtheorem{prop}[theorem]{Proposition}
\newtheorem*{remark}{Remark}
\numberwithin{equation}{section}
\begin{document}

\title{Matrix models for multilevel Heckman-Opdam and multivariate Bessel measures}
\author{Yi Sun}
\address{Y.S.: Department of Mathematics\\ Columbia University\\ 2990 Broadway\\ New York, NY 10027, USA}
\email{yisun@math.columbia.edu}
\date{\today}

\begin{abstract}
We study multilevel matrix ensembles at general $\beta$ by identifying them with a class of processes defined via the branching rules for multivariate Bessel and Heckman-Opdam hypergeometric functions.  For $\beta = 1, 2$, we express the joint multilevel density of the eigenvalues of a generalized $\beta$-Wishart matrix as a multivariate Bessel ensemble, generalizing a result of Dieker-Warren in \cite{DW}.  In the null case, we prove the conjecture of Borodin-Gorin in \cite{BG} that the joint multilevel density of the $\beta$-Jacobi ensemble is given by a principally specialized Heckman-Opdam measure.
\end{abstract}

\maketitle

\tableofcontents

\section{Introduction}

The purpose of the present work is to provide a link between measures defined via the branching structure of certain multivariate hypergeometric functions at general $\beta$ and the multilevel eigenvalue measures of certain random matrix models at $\beta = 1, 2$.  In particular, we consider multivariate Bessel and Heckman-Opdam processes defined in a way similar to the Schur process of \cite{OR} and the Macdonald process of \cite{BC}.  At $\beta = 1$, we show that the multivariate Bessel process with general specializations is realized by the multilevel eigenvalue density of the generalized $\beta$-Wishart ensemble, generalizing a result of Dieker-Warren in \cite{DW} for $\beta = 2$.  At $\beta = 1, 2$, we prove a conjecture of Borodin-Gorin in \cite{BG} that the Heckman-Opdam process at shifted principal specializations is realized by the multilevel eigenvalue density of the $\beta$-Jacobi ensemble.  

The motivation for our work stems from the work of Borodin-Gorin in \cite{BG}, where they use degenerations of techniques from Macdonald processes to show that a rescaling of the Heckman-Opdam process converges to a $2$ dimensional Gaussian free field.  Combined with our identification of the Heckman-Opdam and $\beta$-Jacobi ensembles at $\beta = 1, 2$, this reveals a Gaussian free field structure in the eigenvalues of random matrices, as was first shown for Wigner random matrices in \cite{Bor}.  A special case of the generalized $\beta$-Wishart case is the real spiked covariance model, which admits statistical applications (see \cite{Joh, OMH}) and exhibits the Baik-Ben~Arous-P\'ech\'e phase transition for the largest eigenvalue (see \cite{BBP, Mo, BV}), and it would be interesting to apply this work to analyze it from the perspective of integrable probability.

In the remainder of this introduction, we state our results more precisely and provide additional motivation and background.  For convenience, all notations will be redefined in later sections.

\subsection{Results for generalized $\beta$-Wishart ensembles}

For $\beta > 0$, let $\BB^{n, m}_\beta(\lambda, s)$ and $\wB^{n, m}_\beta(\lambda, s)$ denote the multivariate Bessel and dual multivariate Bessel functions with parameter $\beta$, defined in detail in Section \ref{sec:mvb-def}.  Fix $\theta = \beta/2$.  The multivariate Bessel ensemble with parameters $\{\pi_i\}_{1 \leq i \leq n}$ and $\{\hpi_j\}_{j \geq 1}$ is the process on $\{\mu^m_i\}_{1 \leq i \leq \min\{n, m\}}$ whose distribution on the first $m$ levels is supported on 
\[
\mu^1 \prec \cdots \prec \mu^m, \qquad \mu^l_i \in [0, \infty)
\]
with joint density proportional to 
\begin{multline*}
\Delta(\mu^m)^{\theta} e^{-\theta\sum_{l = 1}^m \hpi_l (|\mu^l| - |\mu^{l-1}|)} \prod_{i = 1}^{\min\{m, n\}} (\mu^m_i)^{\theta (n - \min\{n, m\})} \\ \prod_{i = 1}^{\min\{n, m\}} \frac{(\mu^{\min\{n, m\}}_i)^{\theta - 1}}{(\mu^m_i)^{\theta - 1}}\prod_{l = 1}^{m - 1} \frac{\Delta(\mu^l, \mu^{l+1})^{\theta - 1}}{\Delta(\mu^l)^{\theta - 1}\Delta(\mu^{l+1})^{\theta - 1}} \wB_\beta^{\min\{n, m\}, n}(\mu^m, - \theta\pi)
\end{multline*}
where for sets of variables $\mu$ and $\lambda$ we define $\Delta(\mu) = \prod_{i< j} (\mu_i - \mu_j)$ and $\Delta(\mu, \lambda) = \prod_{i, j} |\mu_i - \lambda_j|$.  The marginal density on level $m$ is proportional to 
\[
\Delta(\mu^m)^{2\theta} \prod_{i = 1}^{\min\{m, n\}} (\mu^m_i)^{\theta (\max\{n, m\} - \min\{n, m\})} \BB_\beta^{\min\{n, m\}, m}(\mu^m, -\theta \hpi) \wB_\beta^{\min\{n, m\}, n}(\mu^m, - \theta \pi).
\]
At $\beta = 2$, a multilevel matrix model was proposed in \cite{BP} and proven in \cite{DW} for the multivariate Bessel ensemble.  Our first main result is a construction of such a matrix model at $\beta = 1$.  Fix some $n \geq 1$.  Let $(A_{ij})$ be an infinite matrix of independent real Gaussian random variables with mean $0$ and variance $(\pi_i + \hpi_j)^{-1}$, and let $A_m$ be its top $m \times n$ corner.  At $\beta = 1$, define the generalized $\beta$-Wishart process to be the sequence of eigenvalues of the $n \times n$ symmetric positive semi-definite matrices
\[
M_m := A_m^T A_m.
\]
Let $\{\mu_i^m\}_{1 \leq i \leq \min\{m, n\}}$ be the non-zero eigenvalues of $M_m$ and denote their joint distribution by $P^{\pi, \hpi}$.  The following two results show that the generalized $\beta$-Wishart ensemble gives a matrix model for the multivariate Bessel process at $\beta = 1$; they generalize the result of \cite{BP, DW} from the $\beta = 2$ case.

{\renewcommand{\thetheorem}{\ref{thm:beta1-wish}}
\begin{theorem}
For $\beta = 1$, the eigenvalues $\{\mu^m_i\}_{m \geq 1}$ of the multilevel generalized $\beta$-Wishart ensemble with parameters $(\pi, \hpi)$ form a Markov chain with transition kernel
\begin{multline*}
Q_{m - 1, m}^{\pi, \hpi}(\mu^{m-1}, d\mu^m) = \frac{\prod_{i = 1}^n (\pi_i + \hpi_m)^{\frac{1}{2}}}{\Gamma(m/2) \Gamma(1/2)^m} e^{-\frac{1}{2}\hpi_m(|\mu^m| - |\mu^{m-1}|)}\\
 \frac{\prod_{i = 1}^{\min\{m, n\}} (\mu^m_i)^{\frac{n - \min\{m, n\} - 1_{m \leq n}}{2}}}{\prod_{i = 1}^{\min\{m - 1, n\}} (\mu^{m-1}_i)^{\frac{n - \min\{m - 1, n\} - 1_{m \leq n}}{2}}} \frac{h^1_\pi(\mu^m)}{h^1_\pi(\mu^{m-1})}\Delta(\mu^m) \Delta(\mu^m, \mu^{m - 1})^{-1/2} 1_{\mu^{m-1} \prec \mu^m} d\mu^m,
\end{multline*}
where $h^1_\pi(\mu)$ denotes the real HCIZ integral
\[
h^1_\pi(\mu) := \int_{V \in O(n)} e^{-\frac{1}{2} V \pi V^T \mu} d\Haar_V
\]
and $d\Haar_V$ denotes Haar measure on the orthogonal group.
\end{theorem}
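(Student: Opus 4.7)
The plan is to induct on $m$, exploiting the rank-one update $M_m = M_{m-1} + a_m a_m^T$, where $a_m \in \RR^n$ is the transpose of the $m$-th row of $A$. By construction, $a_m$ is Gaussian with diagonal covariance $(\Pi + \hpi_m I)^{-1}$ (with $\Pi = \diag(\pi_1, \dots, \pi_n)$) and is independent of $A_{m-1}$, hence of $M_{m-1}$; Cauchy interlacing gives $\mu^{m-1} \prec \mu^m$. For the induction to close cleanly, I would carry the stronger inductive hypothesis that the joint density of $(\mu^1, \dots, \mu^{m-1}, U_{m-1})$, where $U_{m-1}$ is the orthogonal eigenvector matrix of $M_{m-1}$, factors as a function of the eigenvalues alone times $h^1_\pi(\mu^{m-1})^{-1}\exp(-\tfrac{1}{2}\Tr(\Pi U_{m-1} \oL_{m-1} U_{m-1}^T))$, with $\oL_{m-1}$ the diagonal matrix of eigenvalues. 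Here $h^1_\pi(\mu^{m-1})$ appears as the normalizing constant of this orthogonal-group exponential by the very definition given in the statement.

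Conditional on $M_{m-1}$, I rotate coordinates $v := U_{m-1}^T a_m$, so that the eigenvalues of $M_m$ are those of $\oL_{m-1} + vv^T$. Using $|v|^2 = \Tr(M_m - M_{m-1}) = |\mu^m| - |\mu^{m-1}|$, the Gaussian weight of $a_m$ splits as
\[
\exp\!\bigl(-\tfrac{1}{2}\hpi_m(|\mu^m|-|\mu^{m-1}|)\bigr) \exp\!\bigl(-\tfrac{1}{2}\Tr(\Pi(M_m - M_{m-1}))\bigr).
\]
Combining the second factor with the $U_{m-1}$-exponential of the inductive hypothesis and writing $\Tr(\Pi M_m) = \Tr(\Pi U_m \oL_m U_m^T)$ yields the telescoping identity leaving exactly $\exp(-\tfrac{1}{2}\Tr(\Pi U_m \oL_m U_m^T))$ at level $m$. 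Multiplying and dividing by $h^1_\pi(\mu^m)$ restores the inductive form while depositing the characteristic ratio $h^1_\pi(\mu^m)/h^1_\pi(\mu^{m-1})$ into the kernel. Crucially, the eigenvector $U_{m-1}$ drops out of the integrand, so no new orthogonal-group integral needs to be evaluated beyond what is already bundled into $h^1_\pi$.

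What remains is the change-of-variables Jacobian from $v$ to $\mu^m$. The secular equation $1 + \sum_i v_i^2/(\mu^{m-1}_i - \lambda) = 0$ expresses $v_i^2$ as an explicit rational function of $(\mu^{m-1}, \mu^m)$; the corresponding Jacobian at $\beta=1$ contributes the Vandermonde factor $\Delta(\mu^m)$, the interlacing factor $\Delta(\mu^m,\mu^{m-1})^{-1/2}$, and the monomial powers of $\mu^m_i$ and $\mu^{m-1}_i$, together with the Gaussian normalization $\prod_i (\pi_i + \hpi_m)^{1/2}$. I expect the main obstacle to be the rank-deficient regime $m \leq n$: then $\oL_{m-1}$ has a nontrivial kernel, $M_m$ has strictly higher rank than $M_{m-1}$, and one must decompose $v$ along $\mathrm{Im}(M_{m-1})$ and $\ker(M_{m-1})$ and integrate separately over a Stiefel-type parameter describing the new eigendirection in $\ker(M_{m-1})$. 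This latter integration produces the $\Gamma(m/2)\Gamma(1/2)^m$ normalization and the $1_{m \leq n}$ correction in the monomial exponents. Since, unlike the unitary case of Dieker--Warren \cite{DW}, the orthogonal HCIZ integral $h^1_\pi$ has no closed form, it must be manipulated abstractly as a ratio throughout; correctness of the final formula can then be verified by matching the resulting marginal at level $m$ against the multivariate Bessel ensemble density stated in the introduction.
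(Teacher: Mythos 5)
Your strategy is viable but follows a genuinely different route from the paper. The paper's proof has two ingredients: (i) it quotes the null case $\pi = (1,\dots,1)$, $\hpi = (0,\dots,0)$ wholesale from Forrester--Rains \cite{FR}, where the transition kernel of the ordinary multilevel $\beta$-Wishart eigenvalue process (including the rank-one perturbation Jacobian, the $\Delta(\mu^m)\Delta(\mu^m,\mu^{m-1})^{-1/2}$ structure, the monomial exponents with the $1_{m\le n}$ correction, and the $\Gamma(m/2)\Gamma(1/2)^m$ constant) is already established; and (ii) it computes the Radon--Nikodym derivative $dP^{\pi,\hpi}_M/dP_M$ of the increments on the \emph{matrix} level, which is elementary because $M_m - M_{m-1} = a_m^Ta_m$ is determined by its diagonal, and then projects this to the eigenvalue filtration by taking a conditional expectation: orthogonal invariance of the conditional law of $M_m$ given the eigenvalues turns $\EE[e^{-\frac12\Tr(\pi M_m)}\mid \mu^1,\dots,\mu^m]$ into exactly $h^1_\pi(\mu^m)$ via Lemma \ref{lem:hciz-real}. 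Your induction carrying the eigenvector matrix $U_{m-1}$ with the weight $\exp(-\tfrac12\Tr(\Pi U_{m-1}\oL_{m-1}U_{m-1}^T))$ is morally the same tilting, just unrolled step by step instead of stated once as a change of measure; what it buys is a self-contained derivation that does not presuppose the null-case kernel, at the cost of having to redo the entire rank-one eigenvalue perturbation analysis at $\beta=1$.

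That cost is where your writeup is thinnest: the secular-equation Jacobian producing $\Delta(\mu^m)\Delta(\mu^m,\mu^{m-1})^{-1/2}$ and the monomial powers, and especially the rank-deficient regime $m\le n$ with its Stiefel integration yielding $\Gamma(m/2)\Gamma(1/2)^m$ and the $1_{m\le n}$ shift, are asserted rather than carried out, and they are precisely the content of the result the paper cites. You should either execute those computations in full or, more efficiently, cite \cite{FR} for the null case --- at which point your induction collapses to the paper's two-line change-of-measure argument and the stronger inductive hypothesis on $U_{m-1}$ becomes unnecessary (the paper replaces it by the single observation that the conditional law of $(M_1,\dots,M_m)$ given the eigenvalues is invariant under simultaneous orthogonal conjugation). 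Your closing suggestion to ``verify correctness by matching the level-$m$ marginal against the multivariate Bessel density'' is a consistency check, not a proof of the transition kernel, so it cannot substitute for the Jacobian computation.
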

\addtocounter{theorem}{-1}}

{\renewcommand{\thetheorem}{\ref{corr:beta1-mb-ml}}
\begin{corr}
For $\beta = 1$, the eigenvalues $\mu^1 \prec \cdots \prec \mu^m$ of the multilevel $\beta$-Wishart ensemble with parameters $(\pi, \hpi)$ have the law of the multivariate Bessel ensemble with parameters $(\pi, \hpi)$.
\end{corr}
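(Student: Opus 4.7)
The plan is to iteratively apply the one-step Markov kernel from Theorem~\ref{thm:beta1-wish} (with the conventions $\mu^0=\emptyset$, $h^1_\pi(\emptyset)=1$, $|\mu^0|=0$, and $\Delta(\mu^1,\emptyset):=1$) to build the joint density of $(\mu^1,\ldots,\mu^m)$ under $P^{\pi,\hpi}$, and then to match this density term-by-term against the multivariate Bessel density written in the introduction evaluated at $\theta=1/2$.

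Three telescopes do most of the work in parallel. The exponentials collapse into $e^{-\frac{1}{2}\sum_{l=1}^m \hpi_l(|\mu^l|-|\mu^{l-1}|)}$, matching the exponential factor of the Bessel density. The ratios $h^1_\pi(\mu^l)/h^1_\pi(\mu^{l-1})$ collapse to $h^1_\pi(\mu^m)$, which I would identify with the dual multivariate Bessel function $\wB^{\min\{n,m\},n}_1(\mu^m,-\pi/2)$ via the $O(n)$ integral representation of $\wB_\beta^{k,n}$ recalled in Section~\ref{sec:mvb-def}. The single-eigenvalue power factors cancel at intermediate levels $l<n$ because the $\mu^l$-exponent in the numerator of $Q_{l-1,l}^{\pi,\hpi}$ equals the $\mu^l$-exponent in the denominator of $Q_{l,l+1}^{\pi,\hpi}$; the residual non-cancelling contributions come from the top level $l=m$ and, when $m>n$, from the crossover level $l=n$. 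The Vandermonde factors $\prod_{l=1}^m \Delta(\mu^l)\,\Delta(\mu^l,\mu^{l-1})^{-1/2}\,1_{\mu^{l-1}\prec \mu^l}$ are then regrouped, by splitting each $\Delta(\mu^l)=\Delta(\mu^l)^{1/2}\cdot\Delta(\mu^l)^{1/2}$ and shifting indices by one, into the combination $\Delta(\mu^m)^{1/2}\prod_{l=1}^{m-1}\Delta(\mu^l,\mu^{l+1})^{-1/2}\Delta(\mu^l)^{1/2}\Delta(\mu^{l+1})^{1/2}$ appearing in the Bessel density, while the interlacing indicators produce the support condition $\mu^1\prec\cdots\prec\mu^m$.

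The main obstacle is the bookkeeping across the two regimes $m\leq n$ and $m>n$: in the first the Wishart matrix $M_m$ has rank $m$ with $n-m$ forced zero eigenvalues, while in the second it has full rank $n$. The $\min\{n,m\}$ asymmetries in the Bessel density, the $1_{m\leq n}$ correction in the exponents of the Theorem~\ref{thm:beta1-wish} kernel, and the reduction of $h^1_\pi(\mu^m)$ from an $O(n)$ integral against a rank-deficient diagonal matrix (when $m<n$) to a Stiefel-type integral defining $\wB$ all track this same crossover; the Jacobian produced by this Stiefel reduction is precisely what absorbs the residual single-eigenvalue power factors left over from the power telescope, and the accumulated normalization constants $\prod_{l=1}^m \Gamma(l/2)^{-1}\Gamma(1/2)^{-l}\prod_{i=1}^n (\pi_i+\hpi_l)^{1/2}$ assemble into the partition function of the multivariate Bessel ensemble, completing the identification.
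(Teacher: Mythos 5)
Your proposal follows essentially the same route as the paper's proof: multiply the one-step kernels of Theorem~\ref{thm:beta1-wish}, telescope the exponential, HCIZ, and single-eigenvalue power factors, and identify the surviving $h^1_\pi(\mu^m)$ with a (dual) multivariate Bessel function so that the product matches the density (\ref{eq:mvb-density}) at $\theta = 1/2$. The one place you diverge is in the justification of that identification: there is no ``$O(n)$ integral representation of $\wB$'' in Section~\ref{sec:mvb-def} --- the function $\wB$ is defined there by a Gelfand--Tsetlin integral --- so your appeal to it, and to a Stiefel-manifold reduction of the rank-deficient HCIZ integral whose Jacobian absorbs the leftover power factors, is an argument you would actually have to carry out rather than cite. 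The paper instead gets the same statement from Lemma~\ref{lem:hciz-real} (Neretin's orbital-measure result giving $h^1_\pi(\mu^m) = \BB^{n,n}_1(\pi, -\mu^m/2)$), the symmetry $\BB_\beta^{n,m}(\lambda, s) = \BB_\beta^{m,m}(s, \lambda)$ of Proposition~\ref{prop:mvb-prop} to move the zero-padding to the correct argument, and the definitional relation $\BB^{k,n}_1(\lambda, s) = \Gamma(1/2)^{k}\prod_i \lambda_i^{1/2}\,\wB^{k,n}_1(\lambda, s)$, which is also what produces the extra half-integer powers of $\mu^m_i$ you attribute to the Stiefel Jacobian. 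With that substitution your telescoping argument is exactly the paper's proof.
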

\addtocounter{theorem}{-1}}

\subsection{Results for $\beta$-Jacobi ensembles}

For $\beta > 0$, let $\FF_\beta^{n, m}(\lambda, s)$ and $\wF_\beta^{n, m}(\lambda, s)$ denote the Heckman-Opdam and dual Heckman-Opdam hypergeometric functions, defined in detail in Section \ref{sec:ho-def}.  The Heckman-Opdam ensemble with parameters $\{\pi_i\}_{1 \leq i \leq n}$ and $\{\hpi_i\}_{i \geq 1}$ is the probability measure on $\{\mu^l_i\}_{1 \leq i \leq \min\{n, l\}, 1\leq l \leq m}$ whose joint distribution on the first $m$ levels is supported on 
\[
\mu^1 \prec \cdots \prec \mu^m \qquad \mu^l_i \in [0, \infty)
\]
with density proportional to
\begin{multline*} 
\Deltat(\mu^m)^{\theta} \prod_{i = 1}^{\min\{m, n\}} (1 - e^{-\mu_i^m})^{\theta(n - \min\{m, n\})}
 \prod_{i = 1}^{\min\{m, n\}} \frac{(1 - e^{-\mu_i^{\min\{m, n\}}})^{\theta - 1}}{(1 - e^{-\mu_i^m})^{\theta - 1}}\\ e^{-\theta\sum_{l = 1}^m \hpi_l (|\mu^l| - |\mu^{l-1}|)} \prod_{l = 1}^{m - 1} \frac{\Delta(e^{-\mu^l}, e^{-\mu^{l+1}})^{\theta - 1}}{\Delta(e^{-\mu^l})^{\theta - 1} \Delta(e^{-\mu^{l + 1}})^{\theta - 1}} e^{(\theta - 1) |\mu^l|} \wF_\beta^{\min\{m, n\}, n}(\mu^m, - \theta\pi).
\end{multline*}
Its marginal density on level $m$ is proportional to 
\[
\prod_{i = 1}^{\min\{m, n\}} (1 - e^{-\mu^m_i})^{\theta (\max\{m, n\} - \min\{m, n\})} \\ \Deltat(\mu^m)^{2\theta} \FF_\beta^{\min\{m, n\}, m}(\mu^m, -\theta\hpi) \wF_\beta^{\min\{m, n\}, n}(\mu^m, - \theta\pi).
\]
Our second main result is an identification of the Heckman-Opdam ensemble at the principal specialization
\[
\pi = (A + m - 1, A + m - 2, \ldots, A + m - n) \text{ and } \hpi = (0, 1, \ldots)
\]
with the density of transformed eigenvalues of the $\beta$-Jacobi ensemble.  The $\beta$-Jacobi ensemble, defined in detail in Section \ref{sec:bj-def}, is constructed as follows. Let $X$ and $Y$ be infinite matrices of independent standard Gaussian random variables over $\RR$ for $\beta = 1$ and $\CC$ for $\beta = 2$.  Choose $A \geq n$ and $m \leq n$, and let $X^{An}$ and $Y^{mn}$ denote the left $A \times n$ and $m \times n$ corners of $X$ and $Y$.  Then the matrix
\[
(X^{An})^* X^{An} ((X^{An})^* X^{An} + (Y^{mn})^* Y^{mn})^{-1}
\]
has $m = \min\{A, m, n\}$ non-zero eigenvalues $\lambda_1^m, \ldots, \lambda_m^m$ in $(0, 1)$.  The $\beta$-Jacobi ensemble is the joint distribution of the eigenvalues $\{\lambda^m_i\}_{1 \leq i \leq m}$ for $1\leq m \leq n$.  The results below explicitly compute its joint density and match it with that of a principally specialized Heckman-Opdam ensemble for $\beta = 1, 2$; together, they resolve a conjecture of Borodin-Gorin in \cite[Section 1.5]{BG}.

{\renewcommand{\thetheorem}{\ref{thm:bg-ver}}
\begin{theorem} 
For $\beta = 2$, the eigenvalues $\{\lambda^l_i\}$ of the first $m$ levels of the $\beta$-Jacobi ensemble with parameters $(A, n)$ are supported on interlacing sequences 
\[
\lambda^1 \prec \cdots \prec \lambda^m \qquad \lambda^l_i \in [0, 1]
\]
with joint density given by
\[
\wJ_{1, m, n} \Delta(\lambda^m) \prod_{i = 1}^m (\lambda^m_i)^{A + m - n - 1} (1 - \lambda^m_i)^{n - m}\prod_{l = 1}^{m - 1} \prod_{i = 1}^l (\lambda^l_i)^{-2}
\]
for a normalization constant $\wJ_{1, m, n}$.
\end{theorem}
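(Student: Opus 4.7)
My plan is to prove Theorem \ref{thm:bg-ver} by induction on $m$, computing the joint density explicitly from the rank-one update structure $W_l = W_{l-1} + y_l y_l^*$ (where $y_l^*$ is the $l$-th row of $Y$) together with the complex unitary invariance available at $\beta = 2$. Write $S := (X^{An})^* X^{An}$ and $W_l := S + (Y^{ln})^* Y^{ln}$, so that $M_l := S W_l^{-1}$ has the $m$ non-trivial eigenvalues $\lambda^l_1, \ldots, \lambda^l_l$ in $(0,1)$ plus $n - l$ eigenvalues equal to $1$.

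For the base case $m = 1$, the Sherman--Morrison formula on $W_1 = S + y_1 y_1^*$ yields $\lambda^1 = (1 + y_1^* S^{-1} y_1)^{-1}$. Unitary invariance of $y_1$ and the Schur-complement representation of $(S^{-1})_{11}^{-1}$ as an independent complex chi-square with $2(A-n+1)$ degrees of freedom give $\lambda^1 \sim \mathrm{Beta}(A-n+1, n)$, matching the theorem. For the inductive step I pass to the Hermitian matrix $N_l := W_l^{-1/2} S W_l^{-1/2}$, which shares eigenvalues with $M_l$. By unitary invariance of the joint law of $(S, y_1, \ldots, y_{l-1})$, the eigenframe of $N_{l-1}$ is Haar-distributed on $U(n)$ independently of $(\lambda^1, \ldots, \lambda^{l-1})$. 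The passage from $N_{l-1}$ to $N_l$ is a rank-one Hermitian perturbation whose direction, expressed in the $N_{l-1}$-eigenbasis, is uniform on the unit sphere in $\CC^n$; a Cauchy-interlacing computation combined with the $\beta = 2$ HCIZ integral then evaluates the conditional density, producing a transition kernel of the form
\[
Q_{l-1, l}(\lambda^{l-1}, d\lambda^l) \propto \frac{\Delta(\lambda^l)\prod_{i=1}^l f_l(\lambda^l_i)}{\Delta(\lambda^{l-1})\prod_{i=1}^{l-1} f_{l-1}(\lambda^{l-1}_i)}\prod_{i=1}^{l-1}(\lambda^{l-1}_i)^{-2}\, 1_{\lambda^{l-1}\prec\lambda^l}\, d\lambda^l
\]
with $f_l(x) = x^{A+l-n-1}(1-x)^{n-l}$. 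Composing $Q_{0, 1}\cdots Q_{m-1, m}$ telescopes the Vandermonde and $f$-ratios into a single top-level factor $\Delta(\lambda^m)\prod_i f_m(\lambda^m_i) = \Delta(\lambda^m)\prod_i (\lambda^m_i)^{A+m-n-1}(1-\lambda^m_i)^{n-m}$ and accumulates the inverse-square factors into $\prod_{l=1}^{m-1}\prod_{i=1}^l(\lambda^l_i)^{-2}$, which reproduces the density in the theorem.

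The principal obstacle is the explicit evaluation of this transition kernel. Since the natural rank-one update acts on the non-Hermitian $M_l$, symmetrization via $N_l$ is forced; but the conjugating matrices $W_{l-1}^{-1/2}$ and $W_l^{-1/2}$ differ between consecutive levels, so one must verify that the successive spherical integrations decouple cleanly across levels. The most delicate feature is to check that the characteristic $\prod_{i=1}^{l-1}(\lambda^{l-1}_i)^{-2}$ factor --- the signature of the $\beta = 2$ rank-one interlacing density with uniform spherical direction --- emerges with exactly the right Jacobian after the change of variables $(S, y_1, \ldots, y_l) \mapsto (\lambda^1, \ldots, \lambda^l, \text{unitary data})$ is combined with the HCIZ evaluation; this cancellation is the core calculation that drives the telescoping.
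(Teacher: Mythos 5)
Your high-level scaffolding (induction on $m$, a Markov transition kernel whose Vandermonde and weight ratios telescope, an appeal to the $\beta=2$ HCIZ integral) matches the shape of the result, but the step that carries all the weight is asserted rather than proved, and the justification you offer for it does not hold as stated. You claim that the passage from $N_{l-1}$ to $N_l$ is a rank-one Hermitian perturbation whose direction, in the eigenbasis of $N_{l-1}$, is uniform on the unit sphere in $\CC^n$ independently of $(\lambda^1,\ldots,\lambda^{l-1})$. Writing $W_l = W_{l-1} + y_ly_l^*$ and using Sherman--Morrison, the natural Hermitian form of the update is
\[
S^{1/2}W_l^{-1}S^{1/2} \;=\; S^{1/2}W_{l-1}^{-1}S^{1/2} \;-\; \frac{u_lu_l^*}{1+y_l^*W_{l-1}^{-1}y_l}, \qquad u_l := S^{1/2}W_{l-1}^{-1}y_l,
\]
so the perturbation direction is the image of the fresh Gaussian $y_l$ under $S^{1/2}W_{l-1}^{-1}$, and its conditional law given the eigenvalues of $N_{l-1}$ depends on the full matrices $S$ and $W_{l-1}$, not merely on a Haar frame; moreover the scalar $1+y_l^*W_{l-1}^{-1}y_l$ is correlated with $u_l$. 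Even the Markov property of the eigenvalue sequence is not immediate from this description. You flag this as ``the core calculation,'' but without it the proof is not complete, and the mechanism you invoke for the $\prod_i(\lambda^{l-1}_i)^{-2}$ factor (a uniform-direction interlacing density) is not the one that actually produces it.

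The paper sidesteps this difficulty entirely. It first changes variables to $\tau^l_i = (\lambda^l_i)^{-1}-1$ and shows (Proposition \ref{prop:beta-jac-cond}) that, \emph{conditionally on the eigenvalues $\lambda_X$ of $(X^{An})^*X^{An}$}, the process $\{\tau^l_i\}$ is exactly a multilevel generalized $\beta$-Wishart process with spike parameters $\pi_i=\lambda_{X,i}$, $\hpi_j=0$; the conditional transition kernel is then supplied by the Dieker--Warren result (Theorem \ref{thm:gen-wish-dens}) rather than derived from scratch. The unconditional density follows by integrating out $\lambda_X$ against the complex Wishart eigenvalue density, where the key identity
\[
\int_{\lambda_X} h^2_{\lambda_X}(\tau^m)\,\Delta(\lambda_X)^2\,e^{-\sum_i\lambda_{X,i}}\prod_i\lambda_{X,i}^{A-n+m}\,d\lambda_X \;\propto\; \prod_{i=1}^m(1+\tau^m_i)^{-A-m}
\]
is evaluated by undoing the HCIZ integral into a Gaussian integral over $(A+m)\times n$ complex matrices. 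In this route the intermediate-level factors $\prod_i(\lambda^l_i)^{-2}$ arise purely as Jacobians of $\tau\mapsto\lambda$, since the $\tau$-density at $\beta=2$ is flat in the intermediate levels. If you want to salvage your direct approach, the missing ingredient is precisely a proof of the conditional transition kernel for spiked rank-one updates --- which is the content of \cite{DW} --- so conditioning on $X$ first is not an optional convenience but the step that makes the known results applicable.
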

\addtocounter{theorem}{-1}}

{\renewcommand{\thetheorem}{\ref{corr:bg-ver-ho}}
\begin{corr} 
For $\beta = 2$, the transformation $\mu^l_i := - \log \lambda^l_i$ of the eigenvalues of the multilevel $\beta$-Jacobi ensemble have the law of the Heckman-Opdam ensemble with parameters $\pi = (A - n + 1, \ldots, A)$ and $\hpi = (0, 1, \ldots)$.
\end{corr}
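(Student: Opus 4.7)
The plan is to deduce the corollary from Theorem~\ref{thm:bg-ver} by the change of variables $\lambda^l_i = e^{-\mu^l_i}$ and comparison with the Heckman-Opdam ensemble density at $\theta = \beta/2 = 1$. The Jacobian of this substitution contributes $\prod_{l=1}^{m}\prod_{i=1}^l e^{-\mu^l_i}$, and the interlacing region $\lambda^1 \prec \cdots \prec \lambda^m$ in $[0,1]$ translates to the corresponding interlacing region for the $\mu^l$ in $[0,\infty)$. Combining the Jacobian with the power factors $(\lambda^m_i)^{A+m-n-1}$ on the top level and $(\lambda^l_i)^{-2}$ on lower levels from Theorem~\ref{thm:bg-ver}, the transformed joint density becomes proportional to
\[
\Delta(e^{-\mu^m}) \prod_{i=1}^m e^{-\mu^m_i(A+m-n)}(1-e^{-\mu^m_i})^{n-m}\prod_{l=1}^{m-1}\prod_{i=1}^l e^{\mu^l_i}.
\]

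Next I match this against the Heckman-Opdam ensemble density for $\pi = (A-n+1,\ldots,A)$ and $\hpi = (0,1,2,\ldots)$ at $\theta = 1$. Since every factor raised to the power $\theta - 1$ trivializes, the density collapses to
\[
\Deltat(\mu^m) \prod_{i=1}^m (1-e^{-\mu^m_i})^{n-m} \exp\Big(-\sum_{l=1}^m (l-1)(|\mu^l|-|\mu^{l-1}|)\Big) \wF_2^{m,n}(\mu^m, -\pi).
\]
An Abel summation yields $\sum_{l=1}^m (l-1)(|\mu^l|-|\mu^{l-1}|) = (m-1)|\mu^m| - \sum_{l=1}^{m-1}|\mu^l|$, so the exponential factor equals $e^{-(m-1)|\mu^m|}\prod_{l=1}^{m-1}\prod_i e^{\mu^l_i}$, matching the lower-level product on the Jacobi side. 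After this cancellation and using that $\Deltat(\mu^m)$ agrees with $\Delta(e^{-\mu^m})$ up to a simple factor, the identification reduces to verifying
\[
\wF_2^{m,n}(\mu^m, -\pi) \;\propto\; e^{-(A-n+1)|\mu^m|}
\]
at the stated principal specialization.

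The main obstacle is this evaluation of $\wF_2^{m,n}$ at the principal specialization. At $\theta = 1$ the dual Heckman-Opdam function admits a Harish-Chandra / Weyl-character-type determinantal formula whose main piece takes the shape $\det(e^{-s_i \mu_j})_{1\leq i,j\leq m}/(\Delta(s)\Delta(e^{-\mu^m}))$, with additional contributions from the trailing entries $s_{m+1},\ldots,s_n$. When $-s$ is the arithmetic progression $(A-n+1,\ldots,A)$, extracting the common factor $e^{-(A-n+1)\mu_j}$ from the $j$-th column of the $m \times m$ determinant reduces it to the Vandermonde $\det(e^{-(i-1)\mu_j})$, which cancels the Vandermonde in the denominator and yields exactly the claimed exponential $e^{-(A-n+1)|\mu^m|}$; the contribution from the remaining $n-m$ entries of $\pi$ must be seen to combine cleanly with $\prod_i (1-e^{-\mu^m_i})^{n-m}$, and this combinatorial cancellation is the technical heart of the argument. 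Once this identity is established, the two densities agree up to a multiplicative constant, which is pinned down by both being probability measures on the same interlacing region, completing the proof.
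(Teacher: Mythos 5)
Your overall route is the same as the paper's: change variables $\lambda^l_i = e^{-\mu^l_i}$ in the density of Theorem \ref{thm:bg-ver}, observe that at $\theta = 1$ all exponent-$(\theta-1)$ factors in (\ref{eq:ho-density}) trivialize, do the Abel summation for $\hpi = (0,1,\ldots)$, and reduce everything to an evaluation of $\wF_2^{m,n}(\mu^m, -\pi)$ at the shifted principal specialization. Your Jacobian, power counting, and Abel summation are all correct. The problem is that you stop exactly at the one nontrivial point: you acknowledge that the evaluation of $\wF_2^{m,n}(\mu^m,-\pi)$ is ``the technical heart of the argument'' and leave it unproven, proposing instead a Harish--Chandra determinantal formula for the rectangular case $m < n$ whose interaction with the trailing $n-m$ entries of $\pi$ you do not control. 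That is a genuine gap, and the route you sketch is harder than necessary. The evaluation follows from two facts already available in the paper: (i) from the integral formula for $\Phi^{m,n}_\theta$, a constant shift of the spectral parameters satisfies $\FF^{m,n}_\beta(\mu, s + c\mathbf{1}) = e^{c|\mu|}\,\FF^{m,n}_\beta(\mu,s)$, since $\sum_l (s_l+c)(|\nu^l|-|\nu^{l-1}|) = \sum_l s_l(|\nu^l|-|\nu^{l-1}|) + c|\mu|$; and (ii) from Corollary \ref{corr:mac-ho-scale}, the ratio of principal specializations is identically $1$, so $\FF^{m,n}_\beta(\mu,(0,-\theta,\ldots,-(n-1)\theta)) = e^{-\frac{m-1}{2}\theta|\mu|}$. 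Since $-\pi$ is, up to reordering, the principal specialization shifted by $-(A-n+1)$, these give the needed closed form with no determinants and no separate treatment of the trailing entries.

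Relatedly, your stated target $\wF_2^{m,n}(\mu^m,-\pi) \propto e^{-(A-n+1)|\mu^m|}$ is not quite right: the ``simple factor'' relating $\Deltat(\mu^m)$ to $\Delta(e^{-\mu^m})$ is $e^{\frac{m-1}{2}|\mu^m|}$, which depends on $\mu^m$ and cannot be absorbed into a proportionality constant. Tracking it, the identity you actually need is $\wF_2^{m,n}(\mu^m,-\pi) = e^{-\frac{m-1}{2}|\mu^m| - (A-n+1)|\mu^m|}$, which is exactly what (i) and (ii) above produce. With that correction and the evaluation actually carried out, your argument closes and coincides with the paper's proof.
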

\addtocounter{theorem}{-1}}

{\renewcommand{\thetheorem}{\ref{thm:bg-ver-beta1}}
\begin{theorem}
For $\beta = 1$, the eigenvalues $\{\lambda^l_i\}$ of the first $m$ levels of the $\beta$-Jacobi ensemble with parameters $(A, n)$ are supported on interlacing sequences
\[
\lambda^1 \prec \cdots \prec \lambda^m \qquad \lambda^l_i \in [0, 1]
\]
with joint density given by
\[
\wJ_{1/2, m, n} \Delta(\lambda^m) \prod_{i = 1}^m (\lambda^m_i)^{\frac{A + m - n - 4}{2}} (1 - \lambda^m_i)^{\frac{n - m + 1}{2}} \prod_{l = 1}^{m - 1} \prod_{i = 1}^l (\lambda^l_i)^{-1} \Delta(\lambda^l) \Delta(\lambda^l, \lambda^{l + 1})^{-1/2}
\]
for a normalization constant $\wJ_{1/2, m, n}$.
\end{theorem}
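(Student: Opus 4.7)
The strategy parallels the proof of Theorem \ref{thm:bg-ver} for $\beta = 2$: exploit the Markov structure induced by the incremental construction of $Y^{mn}$, then compute the level-to-level transition density using a real rank-one perturbation formula. Since $Y^{mn}$ is obtained from $Y^{(m-1)n}$ by appending a single row $y_m \sim N(0, I_n)$, the matrix $Z_m := (Y^{mn})^T Y^{mn}$ differs from $Z_{m-1}$ by the rank-one update $y_m y_m^T$. Conditional on $W := (X^{An})^T X^{An}$, the generalized eigenvalues $\lambda^m_i$ of the pencil $(W, W + Z_m)$ undergo a rank-one perturbation as $m$ increments, so the sequence $\lambda^1 \prec \cdots \prec \lambda^m$ becomes a Markov chain. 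The joint density can then be written as the product of the marginal density of $\lambda^m$ with a telescoping product of the backward transition densities $\lambda^{l-1} \mid \lambda^l$.

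The marginal density at level $m$ is the standard $\beta = 1$ Jacobi ensemble density with parameters determined by $(A, n, m)$, which supplies the first line of the claimed formula. For the transition, fix $W$ and use orthogonal invariance to diagonalize it; then decompose $y_m = r\xi$ with $r^2 \sim \chi^2_n$ and $\xi$ uniform on $S^{n-1}$, independently. The interlacing $\lambda^{m-1} \prec \lambda^m$ follows from Cauchy interlacing applied to rank-one perturbations of the pencil, and explicit integration over $\xi$ against the orthogonal Haar measure---via a real HCIZ-style computation analogous to the one entering Theorem \ref{thm:beta1-wish}---yields the conditional density of $\lambda^m$ given $\lambda^{m-1}$. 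The essential feature at $\beta = 1$ is the appearance of the factor $\Delta(\lambda^{l-1}, \lambda^l)^{-1/2}$, mirroring the Baryshnikov--Neretin formula for the eigenvalue minor process of GOE, together with the factor $\Delta(\lambda^l)$ from the real eigenvalue Jacobian.

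The main obstacle will be verifying that the transition density can be manipulated into the precise telescoping form displayed in the theorem, including the normalization constants and the exponents $\frac{A + m - n - 4}{2}$ and $\frac{n - m + 1}{2}$, which reflect the Jacobian bookkeeping of the change of variables from matrix entries to eigenvalues plus Haar factor. Concretely, one must track how the $\chi^2_n$ factor from $\|y_m\|^2$ combines with the orthogonal integral over $\xi$ to produce the correct powers of $\lambda^m_i$ and $1 - \lambda^m_i$ at each level, and how the interaction with the fixed Wishart part $W$ contributes to these exponents. Once the transition density and the base case are properly normalized, assembling them into the multilevel formula is an inductive bookkeeping exercise. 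Corollary \ref{corr:bg-ver-ho} (or its $\beta = 1$ analogue) then falls out by matching the principal specialization of the Heckman--Opdam ensemble term-by-term against the obtained joint density under $\mu^l_i = -\log \lambda^l_i$.
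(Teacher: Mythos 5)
Your overall architecture---condition on $X^{An}$, exploit the rank-one increments of $Y^{mn}$ to get a Markov chain, and evaluate transitions with a real HCIZ integral---matches the paper's, which implements it by combining Proposition \ref{prop:beta-jac-cond} (after the substitution $\tau^m_i = (\lambda^m_i)^{-1}-1$, the process conditioned on the spectrum $\lambda_X$ of $(X^{An})^*X^{An}$ is exactly a multilevel generalized $\beta$-Wishart ensemble with $\pi_i = \lambda_{X,i}$, $\hpi_j = 0$) with the already-established Theorem \ref{thm:beta1-wish}, rather than re-deriving the rank-one transition from scratch. The genuine gap is that everything you compute is conditional on $W = (X^{An})^T X^{An}$, and you never say how that conditioning is removed. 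The conditional joint density carries the factor $h^1_{\lambda_X}(\tau^m)$, and the substantive computation in the paper's proof is precisely the de-conditioning integral against the real Wishart eigenvalue law,
\[
\int_{\lambda_X} h^1_{\lambda_X}(\tau^m)\,\Delta(\lambda_X)\, e^{-\frac{1}{2}\sum_i \lambda_{X,i}} \prod_{i=1}^n \lambda_{X,i}^{\frac{A+m-n-1}{2}}\, d\lambda_X \;=\; \prod_{j=1}^m (1+\tau^m_j)^{-\frac{A+m}{2}},
\]
evaluated by undiagonalizing back to a Gaussian integral over $(A+m)\times n$ real matrices. This is where the exponents $\frac{A+m-n-4}{2}$ and $\frac{n-m+1}{2}$ actually come from; they are not produced by the Jacobian bookkeeping of the eigenvalue map that you identify as the main obstacle.

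A second, related problem: you conflate conditional and unconditional laws. Given $W$, the level-$m$ marginal of $\lambda^m$ is \emph{not} the standard $\beta=1$ Jacobi density (it involves $h^1_{\lambda_X}(\tau^m)$), so the decomposition ``(standard Jacobi marginal at level $m$) $\times$ (telescoping backward transitions)'' is not available as stated. It could be rescued by observing that in the generalized Wishart density the $h^1$ factors telescope, so the backward transitions $\tau^{l-1}\mid\tau^l$ are independent of $\lambda_X$ and the decomposition commutes with integration over $W$; that would give a legitimate alternative route using only the classical single-level Jacobi density. But you would need to state and prove that independence, and as written the proposal does not close this loop.
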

\addtocounter{theorem}{-1}}

{\renewcommand{\thetheorem}{\ref{corr:bg-ver-ho-beta1}}
\begin{corr}
For $\beta = 1$, the transformation $\mu^l_i := - \log \lambda^l_i$ of the eigenvalues of the multilevel $\beta$-Jacobi ensemble have the law of the Heckman-Opdam ensemble with parameters $\pi = (A - n + 1, \ldots, A)$ and $\hpi = (0, 1, \ldots)$.
\end{corr}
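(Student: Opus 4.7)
The plan is to deduce the corollary directly from Theorem \ref{thm:bg-ver-beta1} by applying the change of variables $\mu^l_i = -\log \lambda^l_i$ to the explicit joint density given there and matching the resulting expression, factor by factor, against the density of the Heckman-Opdam ensemble at the stated specialization. Both densities are supported on the same space of interlacing sequences $\mu^1 \prec \cdots \prec \mu^m$ with $\mu^l_i \in [0, \infty)$, so it suffices to check that they agree as functions on this domain (the normalization then follows automatically, as both integrate to $1$).

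First I would substitute $\lambda^l_i = e^{-\mu^l_i}$ into the density of Theorem \ref{thm:bg-ver-beta1}, tracking the Jacobian $\prod_{l, i} e^{-\mu^l_i}$. Factors of the form $(\lambda^l_i)^{\alpha}$ turn into $e^{-\alpha \mu^l_i}$, $(1 - \lambda^m_i)^{\alpha}$ into $(1 - e^{-\mu^m_i})^{\alpha}$, and the Vandermondes $\Delta(\lambda^l)$ and $\Delta(\lambda^l, \lambda^{l+1})$ into $\Delta(e^{-\mu^l})$ and $\Delta(e^{-\mu^l}, e^{-\mu^{l+1}})$. This yields an explicit density in the $\mu$-variables built out of the same types of factors appearing in the Heckman-Opdam ensemble density.

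Next I would specialize the Heckman-Opdam density at $\theta = 1/2$ to $\pi = (A - n + 1, \ldots, A)$ and $\hpi = (0, 1, 2, \ldots)$. Using $\hpi_l = l - 1$ and Abel summation, the exponential weight collapses as $\sum_{l=1}^m \hpi_l (|\mu^l| - |\mu^{l-1}|) = (m-1)|\mu^m| - \sum_{l=1}^{m-1} |\mu^l|$, so $e^{-\theta \sum_l \hpi_l (|\mu^l| - |\mu^{l-1}|)} = e^{-(m-1)|\mu^m|/2} \prod_{l=1}^{m-1} e^{|\mu^l|/2}$; the second product cancels exactly against the $\prod_{l=1}^{m-1} e^{(\theta - 1) |\mu^l|}$ in the density, leaving only the factor $e^{-(m-1)|\mu^m|/2}$ together with $\wF_\beta^{\min\{m,n\},n}(\mu^m, -\theta\pi)$ carrying all remaining $\mu^m$-dependence beyond the Vandermonde and $(1 - e^{-\mu^m_i})$ factors.

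The key remaining input is an explicit evaluation of $\wF_\beta^{\min\{m,n\}, n}(\mu^m, -\theta \pi)$ at the principal specialization $\pi = (A - n + 1, \ldots, A)$. Using the Harish-Chandra-type integral formula for the dual Heckman-Opdam function recorded in Section \ref{sec:ho-def}, this specialization should reduce to a product formula whose residual $\mu^m$-dependence, when combined with the $e^{-(m-1)|\mu^m|/2}$ factor above, produces exactly the powers of $e^{-\mu^m_i}$ and $(1 - e^{-\mu^m_i})$ needed to match the transformed Jacobi density. The hardest step is this principal-specialization evaluation: once it is in hand, all that remains is mechanical bookkeeping of Vandermondes, powers of $\theta = 1/2$, and interlacing constraints, which are already parallel between the two densities by construction.
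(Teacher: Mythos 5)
Your proposal follows the paper's proof essentially verbatim: change variables via $\lambda^l_i = e^{-\mu^l_i}$ in the density of Theorem \ref{thm:bg-ver-beta1}, collapse the $\hpi = (0,1,\ldots)$ exponential weight against the $e^{(\theta-1)|\mu^l|}$ factors, and match what remains using the explicit value of the dual Heckman-Opdam function at the principal specialization. The one step you defer as the ``hardest'' --- the product-formula evaluation $\wF^{m,n}_1(\mu^m,-\pi/2) = \Gamma(1/2)^{-m}\, e^{-\frac{m-1}{4}|\mu^m| - \frac{A-n+1}{2}|\mu^m|}\prod_{i=1}^m (1-e^{-\mu^m_i})^{-1/2}$ --- is exactly the identity the paper records and substitutes into (\ref{eq:ho-density}), so you have isolated the same key ingredient the paper uses.
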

\addtocounter{theorem}{-1}}

\subsection{Relation to the literature}

In \cite{BP}, Borodin-P\'ech\'e introduced the $\beta = 2$ generalized Wishart ensemble and conjectured that its joint eigenvalue structure is described by a $\beta = 2$ multivariate Bessel measure with corresponding specializations.  This conjecture was proven for special values of parameters in \cite{FN} and in general in \cite{DW}.  Our Theorem \ref{thm:beta1-wish} and Corollary \ref{corr:beta1-mb-ml} generalize these results to the $\beta = 1$ case.  This case is of particular interest because it contains the statistically important case of real sample covariance matrices under Johnstone's spiked covariance model; we refer the reader to \cite{Joh, OMH} for some examples of such applications.

In \cite{BG}, Borodin-Gorin identify the single-level marginal principally specialized Heckman-Opdam ensemble with the single level density of the $\beta$-Jacobi ensemble.  They then proved that a rescaling of the multilevel Heckman-Opdam process (termed the $\beta$-Jacobi corners process in \cite{BG}) converges to the Gaussian free field in a large $N$ limit.  Borodin-Gorin conjectured that for $\beta = 1, 2, 4$, the principally specialized Heckman-Opdam ensemble corresponded to a multilevel eigenvalue process for real, complex, and symplectic random Jacobi random matrices, respectively.  Our Theorems \ref{thm:bg-ver} and \ref{thm:bg-ver-beta1} prove this for $\beta = 1, 2$, providing an interpretation of the probabilistic limit theorems of \cite{BG} in terms of random matrix ensembles.

\begin{remark}
A different approach to this identification may be obtained via the random co-rank $1$ projections discussed in \cite{FR}, though we do not pursue this approach further here.
\end{remark}

\subsection{Outline of method and organization}

The remainder of this paper is organized as follows.  In Section \ref{sec:mvb-ho}, we fix our notations for Heckman-Opdam hypergeometric functions and multivariate Bessel functions and derive some identities for them as degenerations of the corresponding identities for Macdonald polynomials.  In Section \ref{sec:mvb-wish}, we define the multivariate Bessel ensemble and use the generalized $\beta$-Wishart ensemble to give a multilevel matrix model for it at $\beta = 1, 2$.  In Section \ref{sec:ho-jac}, we define the Heckman-Opdam ensemble and prove the conjecture of Borodin-Gorin that its principal specialization has matrix model given by the $\beta$-Jacobi ensemble at $\beta = 1, 2$.  In Appendix \ref{sec:asymp}, we collect some elementary computations of limits of different special functions which appear in our limit transitions.

\subsection{Acknowledgements}

The author thanks A. Borodin and V. Gorin for bringing their conjecture to his attention and A. Borodin, P. Etingof, V. Gorin, and E. Rains for helpful discussions. Y.~S. was supported by a NSF Graduate Research Fellowship (NSF Grant \#1122374) and a Junior Fellow award from the Simons Foundation.

\section{Multivariate Bessel and Heckman-Opdam functions} \label{sec:mvb-ho}

In this section, we fix our notations on multivariate Bessel functions, Heckman-Opdam hypergeometric functions, and Macdonald polynomials.  We then describe scaling limits which transform Macdonald polynomials to Heckman-Opdam hypergeometric functions and then multivariate Bessel functions.  Finally, we take limits of the Cauchy identity for Macdonald polynomials to prove Cauchy identities for multivariate Bessel and Heckman-Opdam hypergeometric functions.

\subsection{Notations}

Throughout this paper we denote the rational and trigonometric Vandermonde determinants by
\[
\Delta(\lambda) = \prod_{i < j} (\lambda_i - \lambda_j) \qquad \text{ and } \qquad
\Delta^\trig(\lambda) = \prod_{i < j} \left(e^{\frac{\lambda_i - \lambda_j}{2}} - e^{\frac{\lambda_j - \lambda_i}{2}}\right).
\]
We also use the quantity
\[
\Delta(\mu, \lambda) := \prod_{i, j} |\mu_i - \lambda_j|.
\]
Notice that $\Delta^\trig(\lambda) = e^{\frac{n - 1}{2} |\lambda|} \Delta(e^{-\lambda})$.  For a fixed $\beta > 0$, define $\theta := \frac{\beta}{2}$.  For $\lambda_1 \geq \cdots \geq \lambda_n \in \RR^n$, define the Gelfand-Tsetlin polytope to be 
\[
\GT_\lambda := \{(\mu^l_i)_{1 \leq i \leq l, 1 \leq l < n} \mid \mu^{l+1}_i \geq \mu^l_i \geq \mu^{l+1}_{i+1}, \mu^n_i = \lambda_i\}.
\]
A point $\{\mu^l_i\}$ in $\GT_\lambda$ is called a Gelfand-Tsetlin pattern subordinate to $\lambda$.  For $\mu = (\mu_1 \geq \cdots \geq \mu_{n - 1})$ and $\lambda = (\lambda_1 \geq \cdots \geq \lambda_n)$, we write $\mu \prec \lambda$ to denote that $\mu$ and $\lambda$ interlace, meaning that
\[
\lambda_1 \geq \mu_1 \geq \cdots \geq \mu_{n - 1} \geq \lambda_n.
\]

\subsection{Macdonald polynomials}

We recall some identities for Macdonald polynomials; we refer the reader to the book of \cite{Mac} for a complete treatment.  We will take scaling limits of these to obtain facts on Heckman-Opdam hypergeometric functions and multivariate Bessel functions.  Let $P_\lambda(x; q, t)$ and $Q_\lambda(x; q, t)$ denote the Macdonald and dual Macdonald polynomials.  Recall that
\[
Q_\lambda(x; q, t) = b_\lambda(q, t) P_\lambda(q, t),
\]
where $b_\lambda(q, t)$ is defined by taking $a(s) = \lambda_i - j$ and $l(s) = \lambda_j' - i$ for $s = (i, j)$ and setting
\begin{equation} \label{eq:mac-norm-val}
b_\lambda(q, t) := \langle P_\lambda, P_\lambda \rangle^{-1} = \prod_{s \in \lambda} \frac{1 - q^{a(s)} t^{l(s) + 1}}{1 - q^{a(s) + 1} t^{l(s)}} = \prod_{l = 1}^m \prod_{i = 0}^{l - 1} \frac{(t^{i + 1} q^{\lambda_{l - i} - \lambda_l}; q)_{\lambda_l - \lambda_{l + 1}}}{(t^i q^{\lambda_{l - i} - \lambda_l + 1}; q)_{\lambda_l - \lambda_{l + 1}}}.
\end{equation}
The Macdonald polynomials satisfy a Cauchy identity, evaluation identity, and branching rule.

\begin{prop}[Cauchy identity] \label{prop:mac-cauchy}
For any $x_1, \ldots, x_m$ and $y_1, \ldots, y_n$, we have 
\[
\sum_{\ell(\lambda) \leq \min\{m, n\}} P_\lambda(x_1, \ldots, x_m; q, t) Q_\lambda(y_1, \ldots, y_n; q, t) = \prod_{i = 1}^n \prod_{j = 1}^m \frac{(t x_i y_j; q)}{(x_i y_j; q)}.
\]
\end{prop}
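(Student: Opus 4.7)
The plan is to follow Macdonald's standard proof, in which the Cauchy identity appears as the reproducing kernel identity for a natural inner product on symmetric functions. Working in the ring $\Lambda$ of symmetric functions over $\mathbb{Q}(q,t)$, introduce the $(q,t)$-inner product on the power-sum basis,
\[
\langle p_\lambda, p_\mu\rangle_{q,t} := \delta_{\lambda\mu}\, z_\lambda \prod_i \frac{1-q^{\lambda_i}}{1-t^{\lambda_i}},
\]
where $z_\lambda$ is the usual combinatorial factor. The essential structural input, which I would import from Macdonald's book rather than redevelop, is that $\{P_\lambda\}$ is orthogonal for $\langle\cdot,\cdot\rangle_{q,t}$ with $\langle P_\lambda, P_\lambda\rangle_{q,t} = b_\lambda(q,t)^{-1}$; this is proved via the self-adjointness of the Macdonald difference operator combined with the distinctness of its eigenvalues on the $P_\lambda$. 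Consequently $\{P_\lambda\}$ and $\{Q_\lambda\}$ are dual bases, i.e.\ $\langle P_\lambda, Q_\mu\rangle_{q,t} = \delta_{\lambda\mu}$, directly from \eqref{eq:mac-norm-val}.

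Second, expand the right-hand side as an explicit sum over power products: taking logarithms and using $\log(1-u)^{-1} = \sum_{k\geq 1} u^k/k$,
\[
\Omega(x,y;q,t) := \prod_{i,j} \frac{(tx_iy_j;q)}{(x_iy_j;q)} = \exp\!\left(\sum_{k\geq 1} \frac{1-t^k}{k(1-q^k)}\, p_k(x)\, p_k(y)\right) = \sum_\lambda z_\lambda^{-1} \prod_i \frac{1-t^{\lambda_i}}{1-q^{\lambda_i}}\, p_\lambda(x)\, p_\lambda(y).
\]
Comparing with the definition of $\langle\cdot,\cdot\rangle_{q,t}$ shows that $\Omega$ is the reproducing kernel in the standard sense: for any pair of dual bases $\{u_\lambda\}, \{v_\lambda\}$ of $\Lambda$ one has $\Omega(x,y;q,t) = \sum_\lambda u_\lambda(x)\, v_\lambda(y)$. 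Applying this with $u_\lambda = P_\lambda$, $v_\lambda = Q_\lambda$ yields the symmetric-function identity $\Omega(x,y;q,t) = \sum_\lambda P_\lambda(x;q,t)\, Q_\lambda(y;q,t)$. Specializing to the finite alphabets $(x_1,\ldots,x_m)$ and $(y_1,\ldots,y_n)$ and using that $P_\lambda(x_1,\ldots,x_m;q,t) = 0$ for $\ell(\lambda) > m$, together with the analogous vanishing for $Q$, truncates the sum to $\ell(\lambda) \leq \min\{m,n\}$, while on the left the infinite Pochhammer product collapses to the finite double product over $1\leq i\leq n$, $1\leq j\leq m$. This gives the statement of the proposition.

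The main obstacle is the orthogonality statement for Macdonald polynomials together with the identification of their squared norms with $b_\lambda(q,t)^{-1}$; the remainder of the argument is routine manipulation of generating functions. Since Proposition \ref{prop:mac-cauchy} enters the present paper only as a starting point for degeneration to the Heckman-Opdam and multivariate Bessel settings, the most economical route is simply to cite the identity from \cite{Mac} rather than reprove it here.
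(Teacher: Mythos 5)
Your outline is a correct sketch of Macdonald's standard argument (orthogonality of $P_\lambda$ under the $(q,t)$-inner product, duality with $Q_\lambda$ via $b_\lambda(q,t)$, reproducing-kernel expansion of the product, then truncation to finite alphabets), and your concluding recommendation to simply cite \cite{Mac} is exactly what the paper does: Proposition \ref{prop:mac-cauchy} is stated there without proof as a recalled standard identity.
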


\begin{prop}[Evaluation identity] \label{prop:mac-eval}
For any $\lambda$ and $m$ with $n = \ell(\lambda) \leq m$, we have 
\[
P_\lambda(1, t, \ldots, t^{m - 1}; q, t) = t^{\sum_{i = 1}^n (i - 1)\lambda_i} \prod_{1 \leq i < j \leq n} \frac{(q^{\lambda_i - \lambda_j} t^{j - i}; q)}{(q^{\lambda_i - \lambda_j} t^{j - i + 1}; q)} \frac{(t^{j - i + 1}; q)}{(t^{j - i}; q)} \prod_{i = 1}^n \prod_{j = n + 1}^m \frac{(q^{\lambda_i} t^{j- i}; q)}{(q^{\lambda_i} t^{j - i + 1}; q)} \frac{(t^{j - i + 1}; q)}{(t^{j - i}; q)}.
\]
\end{prop}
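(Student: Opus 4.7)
The assertion is Macdonald's classical principal-specialization formula for $P_\lambda(x; q, t)$, which appears as equation (6.11$'$) in Chapter VI of \cite{Mac}. My plan is to deduce it from the standard cell-wise form of that formula by a combinatorial reorganization of the product. In a self-contained writeup I would cite \cite{Mac} for the starting identity and then carry out the reindexing; the statement in the proposition is just the cell-wise formula rewritten as a product over pairs $1 \leq i < j \leq n$ and a second product over $1 \leq i \leq n < j \leq m$ that accounts for the empty rows.

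First, recall (or reprove by induction on $|\lambda|$) the cell-wise evaluation
\[
P_\lambda(1, t, \ldots, t^{m-1}; q, t) \;=\; t^{n(\lambda)} \prod_{s \in \lambda} \frac{1 - q^{a'(s)} t^{m - l'(s)}}{1 - q^{a(s)} t^{l(s)+1}},
\]
where $a(s), l(s), a'(s), l'(s)$ are the arm, leg, co-arm, and co-leg lengths of the box $s \in \lambda$. The inductive proof uses the one-step branching rule
\[
P_\lambda(x_1, \ldots, x_m; q, t) = \sum_{\mu \prec \lambda} \psi_{\lambda/\mu}(q, t)\, x_m^{|\lambda|-|\mu|}\, P_\mu(x_1, \ldots, x_{m-1}; q, t),
\]
specialized to $x_m = t^{m-1}$, together with the explicit $q$-Pochhammer formula for the skew Pieri coefficient $\psi_{\lambda/\mu}(q,t)$; the sum over $\mu \prec \lambda$ telescopes. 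Alternatively, one can derive the cell-wise formula by plugging $y_j = t^{j-1}$ into the Cauchy identity of Proposition \ref{prop:mac-cauchy}, using the telescoping $\prod_{j=1}^m (t^{j-1}x; q)_\infty^{-1}(t^j x; q)_\infty = (t^m x; q)_\infty/(x; q)_\infty$, and extracting coefficients by orthogonality.

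Second, convert the cell-wise product to the stated pairwise product. The prefactor matches since $n(\lambda) = \sum_{i=1}^n (i-1)\lambda_i$. For the main product, split the cells of $\lambda$ by row: for each $i \leq n$, group the cells $(i,j) \in \lambda$ into blocks indexed by the unique $k > i$ with $\lambda_k \leq j - 1 < \lambda_{k-1}$ (taking $k = n+1$ when $j - 1 < \lambda_n$ and allowing $k > n$ up to $k = m$ for the empty rows). In such a block, the arm and leg statistics read $a(s) = \lambda_i - \lambda_k$ (or $\lambda_i$ when $k > n$) and $l(s) = k - i - 1$, so the per-block product of $(1 - q^{a(s)}t^{l(s)+1})$ collapses to a ratio of two infinite $q$-Pochhammer symbols of the form $(q^{\lambda_i-\lambda_k}t^{k-i};q)_\infty/(q^{\lambda_i-\lambda_k}t^{k-i+1};q)_\infty$, together with the matching $(t^{k-i+1};q)_\infty/(t^{k-i};q)_\infty$ coming from the numerator $(1-q^{a'(s)}t^{m-l'(s)})$. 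Summing these block contributions over $k = i+1, \ldots, n$ produces the first product in the statement, while $k = n+1, \ldots, m$ produces the second.

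The main obstacle is purely bookkeeping: keeping track of which cell of $\lambda$ contributes to which pair $(i, k)$ and verifying that the arm/leg/co-arm/co-leg statistics reindex correctly so that the per-cell $q$-Pochhammer factors collapse to the per-pair ratios in the statement. There is no analytic subtlety and no new combinatorial input beyond Macdonald's standard formula; the only care needed is in tracking the $(t^{j-i+1};q)/(t^{j-i};q)$ factors, which arise from the co-leg contribution of the $m - n$ "empty" rows appended below $\lambda$.
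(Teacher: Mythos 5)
Your proposal is correct and is essentially the paper's own approach: the paper states this evaluation identity without proof as a standard fact from \cite{Mac}, and your argument is precisely the routine conversion of Macdonald's cell-wise principal specialization formula (Ch.~VI, (6.11$'$), with $m$ variables) into the pairwise product form, the second product arising from padding $\lambda$ with $m-n$ zero parts. One bookkeeping caution: for a fixed row $i$ the block of cells with leg length $k-i-1$ contributes $(q^{\lambda_i-\lambda_{k-1}}t^{k-i};q)_\infty/(q^{\lambda_i-\lambda_k}t^{k-i};q)_\infty$ to the denominator (same power of $t$, shifted $q$-exponent), not directly the stated ratio, so you must re-pair factors across adjacent values of $k$ — producing boundary terms $(t;q)_\infty$ and $(q^{\lambda_i}t^{n+1-i};q)_\infty$ that combine with the numerator $(t^{m-i+1};q)_{\lambda_i}$ to yield the $(t^{j-i+1};q)/(t^{j-i};q)$ factors and the $j>n$ product — before the claimed form emerges; this is harmless but is where the reindexing is easiest to get wrong.
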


\begin{prop}[Branching rule] \label{prop:mac-branch}
For $\lambda = (\lambda_1 \geq \cdots \geq \lambda_n, 0, \ldots, 0)$, we have
\[
P_\lambda(x_1, \ldots, x_m; q, t) = \sum_{\mu \prec \lambda} \psi^m_{\lambda/\mu}(q, t) P_\mu(x_1, \ldots, x_{m - 1}; q, t) x_m^{|\lambda| - |\mu|}
\]
for the branching coefficient defined for $\ell(\lambda) = m$ in terms of $f(u) := \frac{(tu; q)}{(qu; q)}$ by
\[
\psi^m_{\lambda/\mu}(q, t) = \prod_{1 \leq i \leq j \leq m - 1}  \frac{f(q^{\mu_i - \mu_j} t^{j - i}) f(q^{\lambda_i - \lambda_{j + 1}} t^{j - i})}{f(q^{\mu_i - \lambda_{j + 1}}t^{j - i}) f(q^{\lambda_i - \mu_j} t^{j - i})}.
\]
\end{prop}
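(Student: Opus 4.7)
The branching rule in Proposition \ref{prop:mac-branch} is a classical identity due to Macdonald and appears in Chapter VI of \cite{Mac}. My plan is to cite it from the literature rather than reprove it; nevertheless, for context I would reconstruct the standard derivation in three steps.

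First, I would introduce the skew Macdonald polynomials $P_{\lambda/\mu}(x; q, t)$ via the expansion $P_\lambda(x, y; q, t) = \sum_\mu P_{\lambda/\mu}(x; q, t)\, P_\mu(y; q, t)$ on concatenated alphabets. Specializing the alphabet $x$ to a single variable $x_m$, homogeneity forces $P_{\lambda/\mu}(x_m; q, t) = \psi^m_{\lambda/\mu}(q, t)\, x_m^{|\lambda| - |\mu|}$ for some scalar $\psi^m_{\lambda/\mu}(q, t)$, and the support condition $\mu \prec \lambda$ drops out from the fact that $P_{\lambda/\mu}$ vanishes unless $\lambda/\mu$ is a horizontal strip. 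Iterating this one-variable branching $m$ times then produces an expansion of the form in the proposition, leaving only the explicit evaluation of $\psi^m_{\lambda/\mu}$ to be established.

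Second, I would evaluate $\psi^m_{\lambda/\mu}(q, t)$ using the Pieri rule for multiplication by $e_1$ together with the Macdonald inner product $\langle P_\nu, Q_\rho \rangle = \delta_{\nu\rho}$. The Pieri rule produces coefficients expressible as ratios of Macdonald's norm factors $b_\lambda(q, t)$ from (\ref{eq:mac-norm-val}), and rewriting those ratios in terms of the arm and leg statistics of $\lambda/\mu$ yields a product of $(q, t)$-Pochhammer factors that, after telescoping along rows, collapses to the compact product of ratios of $f(u) = (tu; q)/(qu; q)$ values stated in the proposition.

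The main obstacle is the combinatorial bookkeeping in the second step: matching the ratio $b_\lambda/b_\mu$ to the product over pairs $1 \leq i \leq j \leq m - 1$ of $f$-values requires careful tracking of how arm and leg statistics behave under the interlacing $\mu \prec \lambda$, and this is where all of the computational work in \cite{Mac} is concentrated. Since Proposition \ref{prop:mac-branch} plays only an auxiliary role in the present paper, feeding into the $q \to 1$ degenerations carried out in Section \ref{sec:mvb-ho} that produce the analogous branching rules for Heckman-Opdam hypergeometric functions and multivariate Bessel functions, the plan is to quote \cite{Mac} directly and devote the new work of Section \ref{sec:mvb-ho} to the asymptotics of the stated identity rather than to its derivation.
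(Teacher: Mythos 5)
Your plan matches the paper exactly: the paper offers no proof of Proposition \ref{prop:mac-branch}, stating it as a recalled identity with a pointer to \cite{Mac}, which is precisely what you propose to do. Your background sketch of the standard derivation is accurate in outline (the only small slip is that the horizontal-strip Pieri rule relevant to $\psi^m_{\lambda/\mu}$ is the one for multiplication by $g_r = Q_{(r)}$ rather than by $e_1$, which governs vertical strips), but since you correctly defer to the literature this does not affect the argument.
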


\begin{corr}[Truncated branching rule] \label{corr:mac-branch-trunc}
For $\ell(\lambda) = n < m$, we have for $f(u) := \frac{(tu; q)}{(qu; q)}$ that
\[
\psi^m_{\lambda/\mu}(q, t) = \prod_{1 \leq i < j \leq n} \frac{f(q^{\mu_i - \mu_j} t^{j - i}) f(q^{\lambda_i - \lambda_{j}} t^{j - i - 1})}{f(q^{\lambda_i - \mu_j} t^{j - i}) f(q^{\mu_i - \lambda_{j}}t^{j - i - 1})} \prod_{i = 1}^n \frac{f(1)}{f(q^{\lambda_i - \mu_i})} \prod_{i = 1}^n \frac{f(q^{\lambda_i} t^{n - i})}{f(q^{\mu_i} t^{n - i})},
\]
where $\lambda = (\lambda_1 \geq \cdots \geq \lambda_n, 0, \ldots, 0)$ and $\mu = (\mu_1 \geq \cdots \geq \mu_n, 0, \ldots, 0)$.
\end{corr}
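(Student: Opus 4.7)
The plan is to derive this from Proposition \ref{prop:mac-branch} by padding $\lambda$ and $\mu$ with zeros to lengths $m$ and $m-1$ respectively, and then simplifying the product $\prod_{1 \leq i \leq j \leq m-1}$ by partitioning the index set according to whether $i, j$ are $\leq n$ or $> n$.

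First, I would split $\{(i, j) : 1 \leq i \leq j \leq m-1\}$ into four regions: (A) $1 \leq i \leq j \leq n-1$, (B) $1 \leq i \leq n = j$, (C) $1 \leq i \leq n < j \leq m-1$, and (D) $n < i \leq j \leq m-1$. In region (D), all of $\lambda_i, \lambda_{j+1}, \mu_i, \mu_j$ vanish, so each of the four factors of $f$ in the ratio from Proposition \ref{prop:mac-branch} becomes $f(t^{j-i})$; numerator and denominator therefore cancel. In region (C), $\mu_j$ and $\lambda_{j+1}$ vanish while $\mu_i, \lambda_i$ need not, and one checks directly that numerator and denominator both simplify to $f(q^{\mu_i} t^{j-i}) f(q^{\lambda_i} t^{j-i})$, so these factors cancel as well.

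Next, I would combine the surviving regions (A) and (B) into a single product over $\{(i, j) : 1 \leq i \leq j \leq n\}$ (using $\lambda_{n+1} = 0$ in region (B)), then peel off the diagonal $i = j$, whose contribution is $\prod_{i=1}^n \frac{f(1) f(q^{\lambda_i - \lambda_{i+1}})}{f(q^{\mu_i - \lambda_{i+1}}) f(q^{\lambda_i - \mu_i})}$. In the remaining off-diagonal ($i < j$) part, the factors involving $\lambda_{j+1}$ or $\mu_{j+1}$ suggest the reindexing $k := j + 1$, under which they become $f(q^{\lambda_i - \lambda_k} t^{k-i-1})$ and $f(q^{\mu_i - \lambda_k} t^{k-i-1})$ for $1 \leq i < k \leq n+1$, while the $\mu_i - \mu_j$ and $\lambda_i - \mu_j$ factors retain their original indexing.

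Finally, I would separate out the boundary $k = n+1$ case of this reindexing. Combined with the $\lambda_{i+1} = \lambda_{n+1} = 0$ diagonal factors from the previous step, it yields exactly $\prod_{i=1}^n \frac{f(q^{\lambda_i} t^{n-i})}{f(q^{\mu_i} t^{n-i})}$, while the bulk $1 \leq i < k \leq n$ terms produce (after renaming $k \to j$) the first product in the claimed formula. The main obstacle is not conceptual but purely organizational: one must verify that each $\mu$-index and $\lambda$-index is tracked through the reindexing so that the boundary contributions from $\lambda_{n+1} = 0$ merge correctly with the diagonal $i = j$ terms, and that the cancellations claimed in regions (C) and (D) are complete.
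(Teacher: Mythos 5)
Your derivation is correct and is exactly the (implicit) argument behind the paper's statement: specialize Proposition \ref{prop:mac-branch} with $\lambda_k = \mu_k = 0$ for $k > n$, observe the cancellations for index pairs with $j > n$, and reindex the $\lambda_{j+1}$-factors via $k = j+1$ so that the diagonal terms and the $k = n+1$ boundary reassemble into the products $\prod_i f(1)/f(q^{\lambda_i - \mu_i})$ and $\prod_i f(q^{\lambda_i} t^{n-i})/f(q^{\mu_i} t^{n-i})$. The bookkeeping you outline checks out, including the point that the $k = i+1$ terms missing from the reindexed off-diagonal range are supplied precisely by the diagonal $j = i$ factors.
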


We compute now some quasi-classical limits of Macdonald polynomials at both a general specialization and the principal specialization.  These results overlap with those of \cite[Section 6]{BG}, but we include them here for the reader's convenience.

\begin{lemma} \label{lem:mac-eval-lim}
For $q = e^{-\eps}$ and $t = e^{-\theta\eps}$, we have 
\[
\lim_{\eps \to 0} \eps^{\theta ((m - n)n + n(n - 1)/2)} P_{\eps^{-1}\lambda}(1, t, \ldots, t^{m - 1}; q, t) =\frac{\Gamma(\theta)^n}{\Gamma(m\theta) \cdots \Gamma((m - n + 1)\theta)} \Delta(e^{-\lambda})^\theta \prod_{i = 1}^n (1 - e^{-\lambda_i})^{\theta (m - n)}.
\]
\end{lemma}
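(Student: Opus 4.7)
The plan is to substitute $q = e^{-\eps}$, $t = e^{-\theta\eps}$, and $\lambda \mapsto \eps^{-1}\lambda$ into the evaluation identity of Proposition \ref{prop:mac-eval}, and to compute the leading $\eps \to 0^+$ behavior of each factor. Under this substitution the right side of Proposition \ref{prop:mac-eval} splits into four meaningfully distinct pieces: (a) the elementary prefactor $t^{\sum_i(i-1)\eps^{-1}\lambda_i} = e^{-\theta\sum_i(i-1)\lambda_i}$; (b) ratios over $1 \leq i < j \leq n$ of the form $(q^{\lambda_i-\lambda_j}t^{j-i};q)_\infty/(q^{\lambda_i-\lambda_j}t^{j-i+1};q)_\infty$ in which the argument approaches the fixed value $e^{-(\lambda_i-\lambda_j)} \in (0,1)$; (c) analogous ratios over $1 \leq i \leq n,\ n+1 \leq j \leq m$ in which the argument approaches $e^{-\lambda_i}$; and (d) the two groups of ratios $(t^{j-i+1};q)_\infty/(t^{j-i};q)_\infty$ whose numerator and denominator both tend to $1$ and account for the $\eps^{-\theta((m-n)n+n(n-1)/2)}$ divergence that is compensated by the explicit prefactor on the left.

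Two asymptotic lemmas carry the proof. The first, obtained by expanding $\log (u;q)_\infty = -\sum_{n\geq 1} u^n/[n(1-q^n)]$ and using $(1-q^{n\theta})/(1-q^n) \to \theta$, gives for any $u \in (0,1)$ that
\[
\lim_{\eps \to 0} \frac{(u;q)_\infty}{(ut;q)_\infty} = (1-u)^\theta.
\]
The second is the standard $q$-Gamma asymptotic $(q^x;q)_\infty \sim (q;q)_\infty(1-q)^{1-x}/\Gamma(x)$, which yields
\[
\frac{(t^{k+1};q)_\infty}{(t^k;q)_\infty} \sim \frac{\Gamma(k\theta)}{\Gamma((k+1)\theta)}\,\eps^{-\theta}.
\]
Applying the first asymptotic termwise to (b) and (c) produces, respectively, $\prod_{i<j \leq n}(1 - e^{-(\lambda_i-\lambda_j)})^\theta$ and $\prod_{i=1}^n (1-e^{-\lambda_i})^{\theta(m-n)}$. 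The latter already matches the target, while the former, combined with the prefactor $e^{-\theta\sum_i(i-1)\lambda_i}$ from (a), the identity $\sum_{i<j}\lambda_j = \sum_i(i-1)\lambda_i$, and the rewriting $1 - e^{-(\lambda_i-\lambda_j)} = e^{\lambda_j}(e^{-\lambda_j} - e^{-\lambda_i})$, reassembles into $\Delta(e^{-\lambda})^\theta$ (with the paper's sign convention absorbing the factor $(-1)^{\theta\binom{n}{2}}$).

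The cleanest way to handle (d) is to observe that its two subproducts join to give a telescoping product in $j$:
\[
\prod_{i=1}^n \prod_{j=i+1}^m \frac{(t^{j-i+1};q)_\infty}{(t^{j-i};q)_\infty} = \prod_{i=1}^n \frac{(t^{m-i+1};q)_\infty}{(t;q)_\infty}.
\]
Applying the second asymptotic to each factor yields $\prod_{i=1}^n \Gamma(\theta)\Gamma((m-i+1)\theta)^{-1} \eps^{-(m-i)\theta}$, and the identity $\sum_{i=1}^n(m-i) = (m-n)n + n(n-1)/2$ shows that the $\eps$-divergence cancels exactly against the prefactor on the left, while the Gamma quotient is precisely $\Gamma(\theta)^n/[\Gamma(m\theta)\cdots\Gamma((m-n+1)\theta)]$ as in the statement. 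The only real subtlety in the argument is the index bookkeeping needed to realize that the two blocks of $(t^a;q)_\infty$-ratios fuse into a single telescoping product; once that is noticed the verification is a routine combination of the two asymptotics above.
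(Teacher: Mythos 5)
Your proposal is correct and follows essentially the same route as the paper: substitute into the evaluation identity of Proposition \ref{prop:mac-eval} and apply the two asymptotics of Lemmas \ref{lem:qpoch-asymp} and \ref{lem:ratio-qpoch} (which you re-derive rather than cite), with the Gamma ratios telescoping to produce $\Gamma(\theta)^n/[\Gamma(m\theta)\cdots\Gamma((m-n+1)\theta)]$. Your write-up is somewhat more explicit about the telescoping and the reassembly of $\Delta(e^{-\lambda})^\theta$, but there is no substantive difference in method.
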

\begin{proof}
Applying Lemmas \ref{lem:qpoch-asymp} and \ref{lem:ratio-qpoch} in Proposition \ref{prop:mac-eval}, we find that 
\begin{align*}
\lim_{\eps \to 0} &\eps^{\theta ((m - n)n + n(n - 1)/2)} P_{\eps^{-1}\lambda}(1, t, \ldots, t^{m - 1}; q, t)\\ 
&= e^{-\theta\sum_{i = 1}^n (i - 1)\lambda_i} \prod_{1 \leq i < j \leq n} (1 - e^{\lambda_j - \lambda_i})^\theta \frac{\Gamma((j - i)\theta)}{\Gamma((j - i + 1)\theta)} \prod_{i = 1}^n \prod_{j = n + 1}^m (1 - e^{-\lambda_i})^\theta \frac{\Gamma((j - i)\theta)}{\Gamma((j - i + 1)\theta)}\\
&= \frac{\Gamma(\theta)^n}{\Gamma(m\theta) \cdots \Gamma((m - n + 1)\theta)} \Delta(e^{-\lambda})^\theta \prod_{i = 1}^n (1 - e^{-\lambda_i})^{\theta (m - n)}. \qedhere
\end{align*}
\end{proof}

\begin{lemma} \label{lem:mac-branch-lim}
For $q = e^{-\eps}$ and $t = e^{-\theta \eps}$, if $\ell(\lambda) = m$, we have 
\[
\lim_{\eps \to 0} \eps^{(\theta - 1)n} \psi^m_{\eps^{-1}\lambda/\eps^{-1}\mu}(q, t) =  \Gamma(\theta)^{1 - m} \frac{\Delta(e^{-\mu}, e^{-\lambda})^{\theta - 1}}{\Delta(e^{-\mu})^{\theta - 1} \Delta(e^{-\lambda})^{\theta - 1}} e^{(\theta - 1)|\mu|}.
\]
\end{lemma}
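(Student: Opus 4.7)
The plan is to start from the formula for $\psi^m_{\lambda/\mu}$ in Proposition \ref{prop:mac-branch}, substitute $\lambda \mapsto \eps^{-1}\lambda$, $\mu \mapsto \eps^{-1}\mu$, $q = e^{-\eps}$, and $t = e^{-\theta\eps}$, and take the limit factor by factor. The structural observation is that the factors $f(u) = (tu; q)_\infty/(qu; q)_\infty$ split into two regimes: those whose argument stays bounded away from $1$, which contribute finite limits of the form $(1-e^{-(\cdot)})^{1-\theta}$, and the single problematic factor $f(1)$ appearing in each diagonal term, whose $\eps^{1 - \theta}$ singularity produces the prefactor balancing the $\eps^{(\theta - 1)n}$ on the left-hand side.

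Concretely, I would split $\prod_{1 \leq i \leq j \leq m - 1}$ into the diagonal part $i = j$ and the off-diagonal part $i < j$. For every off-diagonal factor, and for three of the four diagonal factors, the argument has the form $q^{\eps^{-1}(a - b)} t^{j - i}$ with $a - b > 0$ fixed by strict interlacing, so it converges to $e^{-(a - b)} \in (0, 1)$. Applying dominated convergence to $\log f(u) = - \sum_{n \geq 1} u^n (t^n - q^n)/(n(1 - q^n))$ using the pointwise limit $(t^n - q^n)/(1 - q^n) \to 1 - \theta$ and a uniform bound in $n$ yields $f(u) \to (1 - u)^{1 - \theta}$. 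The remaining diagonal factor $f(1)$, occurring once per $i = 1, \ldots, m - 1$, is handled via Lemma \ref{lem:ratio-qpoch} (equivalently the $q$-Gamma identity $(q^\theta; q)_\infty/(q; q)_\infty = (1-q)^{1-\theta}/\Gamma_q(\theta)$), giving $f(1) \sim \eps^{1 - \theta}/\Gamma(\theta)$. The resulting $m - 1$ singular copies produce exactly the scaling $\eps^{(1 - \theta)(m - 1)}$ and the prefactor $\Gamma(\theta)^{1 - m}$ on the right-hand side.

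It remains to rearrange the surviving finite product of $(1 - e^{-(a - b)})^{1 - \theta}$ factors into the target. Using $1 - e^{-(a - b)} = e^{b}(e^{-b} - e^{-a})$, I would split each factor into an exponential prefactor $e^{b(1 - \theta)}$ and a ``geometric'' factor $(e^{-b} - e^{-a})^{1 - \theta}$. The exponential prefactors from the off-diagonal $i < j$ terms cancel between numerator and denominator (the $\lambda_{j+1}$ and $\mu_j$ prefactors appear in matching symmetric positions), while those from the $i = j$ diagonal leave over exactly $e^{-\mu_i(1 - \theta)} = e^{(\theta - 1)\mu_i}$ per $i$, combining to $e^{(\theta - 1)|\mu|}$. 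Setting $x_i = e^{-\lambda_i}$ and $y_i = e^{-\mu_i}$, the interlacing $x_1 < y_1 < x_2 < \cdots < y_{m-1} < x_m$ makes every geometric factor positive, and direct enumeration shows that the numerator factors assemble into $|\Delta(e^{-\mu})|^{1 - \theta} |\Delta(e^{-\lambda})|^{1 - \theta}$ and the denominator into $\Delta(e^{-\mu}, e^{-\lambda})^{1 - \theta}$, yielding the claimed ratio.

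The main obstacle I anticipate is this last combinatorial matching: one must check that the diagonal numerator factors $(x_{i+1} - x_i)^{1-\theta}$ fill exactly the consecutive-index gaps missing from the off-diagonal $(x_{j+1} - x_i)^{1-\theta}$ with $j \geq i + 1$, completing the full Vandermonde on $\{x_1, \ldots, x_m\}$, and dually that the denominator pairs $(x_{j + 1}, y_i)$ for $i \leq j$ together with $(y_j, x_i)$ for $i \leq j$ exhaust precisely all $(m - 1) \times m$ pairs $\{(y_a, x_b) : 1 \leq a \leq m - 1, 1 \leq b \leq m\}$. Once this bookkeeping is verified, the signs (trivial under the interlacing regime) take care of themselves and the identity follows.
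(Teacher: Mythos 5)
Your proposal is correct and follows essentially the same route as the paper's proof: take the branching formula of Proposition \ref{prop:mac-branch} factor by factor, extract the $m-1$ singular diagonal factors $f(1)\sim\eps^{1-\theta}/\Gamma(\theta)$ to produce the scaling and the prefactor $\Gamma(\theta)^{1-m}$, send the remaining factors to $(1-u)^{1-\theta}$ (the paper cites Lemma \ref{lem:f-lim}, which packages the dominated-convergence computation you redo by hand), and reassemble into the Vandermonde ratio times $e^{(\theta-1)|\mu|}$. The combinatorial bookkeeping you flag as the main obstacle does check out exactly as you describe, and is the step the paper leaves implicit.
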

\begin{proof}
Taking the limit of Proposition \ref{prop:mac-branch}, we find by applying Lemma \ref{lem:f-lim} that 
\begin{align*}
\lim_{\eps \to 0} \eps^{(\theta - 1)(m - 1)} \psi^m_{\eps^{-1}\lambda/\eps^{-1}\mu}(q, t) &= \Gamma(\theta)^{1 - m}\!\!\!\!\prod_{1 \leq i < j \leq m - 1} (1 - e^{\mu_j - \mu_i})^{1 - \theta}\!\!\!\!\!\!\! \prod_{1 \leq i \leq j \leq m - 1}\!\! \frac{(1 - e^{\lambda_{j + 1} - \lambda_j})^{1 - \theta}}{(1 - e^{\lambda_{j + 1} - \mu_i})^{1 - \theta} (1 - e^{\mu_j - \lambda_i})^{1 - \theta}}\\
&= \Gamma(\theta)^{1 - m} \frac{\Delta(e^{-\mu}, e^{-\lambda})^{\theta - 1}}{\Delta(e^{-\mu})^{\theta - 1} \Delta(e^{-\lambda})^{\theta - 1}} e^{(\theta - 1)|\mu|}. \qedhere
\end{align*}
\end{proof}

\begin{lemma} \label{lem:mac-tbranch-lim}
For $q = e^{-\eps}$ and $t = e^{-\theta \eps}$, if $\ell(\lambda) = n < m$, we have 
\[
\lim_{\eps \to 0} \eps^{(\theta - 1)n} \psi^m_{\eps^{-1}\lambda/\eps^{-1}\mu}(q, t) = \Gamma(\theta)^{-n} \frac{\Delta(e^{-\mu}, e^{-\lambda})^{\theta - 1}}{\Delta(e^{-\mu})^{\theta - 1} \Delta(e^{-\lambda})^{\theta - 1}} \prod_{i = 1}^n \frac{(1 - e^{-\mu_i})^{\theta - 1}}{(1 - e^{-\lambda_i})^{\theta - 1}} e^{(\theta - 1)|\mu|}. 
\]
\end{lemma}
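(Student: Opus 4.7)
The strategy is to mirror the proof of Lemma \ref{lem:mac-branch-lim}: substitute $\lambda\mapsto \eps^{-1}\lambda$ and $\mu\mapsto\eps^{-1}\mu$ into the formula of Corollary \ref{corr:mac-branch-trunc}, take the $\eps\to 0$ limit of each $f$-factor using Lemma \ref{lem:f-lim}, and then reassemble the resulting product into the claimed Vandermonde and exponential form.

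First I would note that under the substitution, the arguments of the seven types of $f$-factors become, in the limit,
\begin{align*}
q^{\eps^{-1}(\mu_i-\mu_j)}t^{j-i}&\to e^{\mu_j-\mu_i}, & q^{\eps^{-1}(\lambda_i-\lambda_j)}t^{j-i-1}&\to e^{\lambda_j-\lambda_i},\\
q^{\eps^{-1}(\lambda_i-\mu_j)}t^{j-i}&\to e^{\mu_j-\lambda_i}, & q^{\eps^{-1}(\mu_i-\lambda_j)}t^{j-i-1}&\to e^{\lambda_j-\mu_i},\\
q^{\eps^{-1}(\lambda_i-\mu_i)}&\to e^{\mu_i-\lambda_i}, & q^{\eps^{-1}\lambda_i}t^{n-i}&\to e^{-\lambda_i},
\end{align*}
and $q^{\eps^{-1}\mu_i}t^{n-i}\to e^{-\mu_i}$. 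Each of these targets lies strictly inside $(0,1)$ under the interlacing assumption $\mu\prec\lambda$ with generic positions, so Lemma \ref{lem:f-lim} converts each such $f$-factor to $(1-u)^{1-\theta}$ for the appropriate $u$. In contrast, each of the $n$ factors $f(1)$ from the middle product in Corollary \ref{corr:mac-branch-trunc} degenerates, contributing a singular factor of order $\eps^{1-\theta}\Gamma(\theta)^{-1}$. The total $\eps$-scaling is therefore $\eps^{n(1-\theta)}$, which cancels exactly against the prefactor $\eps^{(\theta-1)n}$ and produces a finite nonzero limit
\[
\Gamma(\theta)^{-n}\!\!\!\prod_{1\leq i<j\leq n}\!\!\!\frac{(1-e^{\mu_j-\mu_i})^{1-\theta}(1-e^{\lambda_j-\lambda_i})^{1-\theta}}{(1-e^{\mu_j-\lambda_i})^{1-\theta}(1-e^{\lambda_j-\mu_i})^{1-\theta}}\prod_{i=1}^n(1-e^{\mu_i-\lambda_i})^{\theta-1}\prod_{i=1}^n\frac{(1-e^{-\lambda_i})^{1-\theta}}{(1-e^{-\mu_i})^{1-\theta}}.
\]

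The remaining task is a purely algebraic identification of this expression with the claimed one. Using $1-e^{b-a} = e^{-b}(e^{-b}-e^{-a})\cdot e^{2b-a}/\text{(sign)}$ --- more usefully, $1-e^{b-a} = (e^{-a}-e^{-b})e^{a}$ when $a<b$ --- I would rewrite the $(1-e^{\mu_j-\mu_i})$ and $(1-e^{\lambda_j-\lambda_i})$ factors (with $i<j$) in terms of $|\Delta(e^{-\mu})|$ and $|\Delta(e^{-\lambda})|$, and the combined factors $(1-e^{\mu_j-\lambda_i})(1-e^{\lambda_j-\mu_i})$ together with $(1-e^{\mu_i-\lambda_i})$ in terms of $\Delta(e^{-\mu},e^{-\lambda})$. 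Splitting the product defining $\Delta(e^{-\mu},e^{-\lambda})$ into the ranges $j\leq i$ and $j>i$ (where the interlacing forces $\mu_i\leq\lambda_j$ and $\mu_i\geq\lambda_j$ respectively) gives the cleanest bookkeeping and produces exactly the factors appearing in the numerator. Tracking the exponential prefactors $e^{\sum_i i\mu_i}$ and $e^{\sum_j(j-1)\lambda_j}$ that arise from these rewrites, the $\lambda$-dependent exponentials in numerator and denominator cancel and the $\mu$-exponentials telescope to leave exactly $e^{|\mu|}$ (raised to $\theta-1$ after restoring the outer exponent).

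The main obstacle is this final bookkeeping: keeping track of signs (so that $\Delta(\cdot)^{\theta-1}$ is unambiguous for non-integer $\theta$) and verifying that the index splits $j\leq i$ versus $j>i$ produce precisely the right exponential offsets. Once that calculation is checked, equating the assembled product to $\frac{\Delta(e^{-\mu},e^{-\lambda})^{\theta-1}}{\Delta(e^{-\mu})^{\theta-1}\Delta(e^{-\lambda})^{\theta-1}}\prod_i\frac{(1-e^{-\mu_i})^{\theta-1}}{(1-e^{-\lambda_i})^{\theta-1}}e^{(\theta-1)|\mu|}$ is immediate, completing the proof.
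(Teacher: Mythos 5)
Your proposal is correct and takes essentially the same route as the paper: the paper likewise takes the $\eps\to 0$ limit of Corollary \ref{corr:mac-branch-trunc} factor by factor via Lemma \ref{lem:f-lim}, with the $n$ factors $f(1)$ supplying the singular $\eps^{n(1-\theta)}\Gamma(\theta)^{-n}$ that cancels the prefactor, and then reassembles the surviving product into the Vandermonde/exponential form exactly as you describe. (One small slip in your aside: the rewriting you want is $1-e^{b-a}=e^{b}(e^{-b}-e^{-a})$, not $(e^{-a}-e^{-b})e^{a}$; with that correction your exponential bookkeeping, including the cancellation of the $e^{\sum_j (j-1)\mu_j}$ and $e^{\sum_j (j-1)\lambda_j}$ factors and the leftover diagonal contribution $e^{|\mu|}$, goes through.)
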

\begin{proof}
Taking the limit of Corollary \ref{corr:mac-branch-trunc}, we find by applying Lemma \ref{lem:f-lim} that 
\begin{align*}
\lim_{\eps \to 0} \eps^{(\theta - 1)n} \psi^m_{\eps^{-1}\lambda/\eps^{-1}\mu}(q, t) &= \Gamma(\theta)^{-n}\!\!\!\!\! \prod_{1 \leq i < j \leq n}\! \frac{(1 - e^{\mu_j - \lambda_i})^{\theta - 1} (1 - e^{\lambda_j - \mu_i})^{\theta - 1}}{(1 - e^{\mu_j - \mu_i})^{\theta - 1} (1 - e^{\lambda_j - \lambda_i})^{\theta - 1}} \prod_{i = 1}^n  \frac{(1 - e^{\mu_i - \lambda_i})^{\theta - 1}(1 - e^{-\mu_i})^{\theta - 1}}{(1 - e^{-\lambda_i})^{\theta - 1}} \\
&= \Gamma(\theta)^{-n} \frac{\Delta(e^{-\mu}, e^{-\lambda})^{\theta - 1}}{\Delta(e^{-\mu})^{\theta - 1} \Delta(e^{-\lambda})^{\theta - 1}} \prod_{i = 1}^n \frac{(1 - e^{-\mu_i})^{\theta - 1}}{(1 - e^{-\lambda_i})^{\theta - 1}} e^{(\theta - 1)|\mu|}. \qedhere
\end{align*}
\end{proof}

\subsection{Definition of the Heckman-Opdam hypergeometric function} \label{sec:ho-def}

For $s = (s_1, \ldots, s_m)$ and $\lambda = (\lambda_1, \ldots, \lambda_n, 0, \ldots, 0)$, define the integral formula
\begin{multline*}
\Phi^{n, m}_{\theta}(\lambda, s) = \Gamma(\theta)^{-n(m - n) - n(n-1)/2} \int_{\mu \in \GT_\lambda}\!\!\!\!\!\!\!\!\!\! e^{\sum_{l = 1}^{m} s_l (|\mu^{l}| - |\mu^{l - 1}|)}\\ \prod_{i = 1}^n \frac{(1 - e^{-\mu^n_i})^{\theta - 1}}{(1 - e^{-\lambda_i})^{\theta - 1}}\prod_{l = 1}^{m - 1} \frac{\Delta(e^{-\mu^l}, e^{-\mu^{l + 1}})^{\theta - 1}}{\Delta(e^{-\mu^l})^{\theta - 1} \Delta(e^{-\mu^{l + 1}})^{\theta - 1}} e^{(\theta - 1)|\mu^l|} \prod_{l = 1}^{m - 1} \prod_{i = 1}^{\min\{l, n\}} d\mu^l_i,
\end{multline*}
where the integral is over a space of dimension $(m - n)n + \frac{n(n - 1)}{2}$.  The Heckman-Opdam hypergeometric function is defined by 
\begin{equation} \label{eq:ho-def}
\FF_{\beta}^{n, m}(\lambda, s) := \frac{\Gamma(m\theta) \cdots \Gamma((m - n + 1)\theta)}{\Gamma(\theta)^{n}} \frac{\Phi^{n, m}_{\theta}(\lambda, s)}{\Delta^\trig(\lambda)^\theta \prod_{i = 1}^n (1 - e^{-\lambda_i})^{\theta(m - n)}}.
\end{equation}
Define the conjugate Heckman-Opdam hypergeometric function by 
\begin{equation} \label{eq:ho-conj-def}
\wF_\beta^{n, m}(\lambda, s) := \Gamma(\theta)^{-n} \prod_{i = 1}^n (1 - e^{-\lambda_i})^{\theta - 1} \FF_\beta^{n, m}(\lambda, s).
\end{equation}

\begin{remark}
The integral formula for the Heckman-Opdam hypergeometric function which appears here was first given by Borodin-Gorin in \cite{BG}, though our choice of normalization differs from that of \cite{BG}.  We refer the reader to \cite{HS} for an exposition of the classical theory of these functions and to \cite{Sun:ho} for an explanation of the connection between the two.
\end{remark}

We now prove several limit formulas relating Macdonald polynomials and Heckman-Opdam hypergeometric functions.

\begin{corr}[{\cite[Propositions 6.2 and 6.4]{BG}}] \label{corr:mac-lim} 
If $\lambda = (\lambda_1, \ldots, \lambda_n, 0, \ldots, 0)$ and $s = (s_1, \ldots, s_m)$ with $\lambda_1 > \cdots > \lambda_n > 0$, then 
\[
\lim_{\eps \to 0} \eps^{\theta(n(m - n) + n (n - 1)/2)} P_{\eps^{-1}\lambda}(e^{\eps s_1}, \ldots, e^{\eps s_m}; e^{-\eps}, e^{-\theta \eps}) = \Phi^{n, m}_\theta(\lambda, s).
\]
\end{corr}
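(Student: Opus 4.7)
The plan is to iterate the branching rule of Proposition \ref{prop:mac-branch} so as to expand
\[
P_{\eps^{-1}\lambda}(e^{\eps s_1}, \ldots, e^{\eps s_m}; e^{-\eps}, e^{-\theta\eps}) = \sum_{\bbmu^0 \prec \cdots \prec \bbmu^m} \prod_{l = 1}^m \psi^l_{\bbmu^l/\bbmu^{l-1}}(q, t)\, e^{\eps s_l(|\bbmu^l| - |\bbmu^{l-1}|)},
\]
where $\bbmu^0 = \emptyset$, $\bbmu^m = \eps^{-1}\lambda$, and $\bbmu^l$ runs over integer partitions with $\ell(\bbmu^l) \leq \min(l, n)$. (Above the threshold $l = n$ the branching takes the truncated form of Corollary \ref{corr:mac-branch-trunc}, since the top partition has fixed effective length $n$.) I would then set $\bbmu^l = \eps^{-1}\mu^l$ so that the sum becomes a Riemann sum over the $\eps\ZZ$-lattice inside $\GT_\lambda$ with mesh $\eps$.

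Next, I would feed the asymptotics of Lemma \ref{lem:mac-branch-lim} into each branching factor with $l \leq n$, and those of Lemma \ref{lem:mac-tbranch-lim} into each factor with $l > n$. These contribute the $\eps$-powers
\[
\sum_{l=1}^n (\theta - 1)(l - 1) + \sum_{l=n+1}^m (\theta - 1) n = (\theta - 1)\Bigl[\tfrac{n(n-1)}{2} + n(m-n)\Bigr],
\]
while the Riemann-sum-to-integral conversion contributes one further $\eps$ per integration variable, i.e.\ another $\eps^{n(n-1)/2 + n(m-n)}$ (the total number of free coordinates, matching the stated dimension $(m-n)n + n(n-1)/2$ of $\GT_\lambda$). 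Adding the two contributions yields exactly the prefactor $\eps^{-\theta((m-n)n + n(n-1)/2)}$ needed to make the renormalized limit finite. The product of $\Gamma(\theta)$-powers coming from the two lemmas collapses to $\Gamma(\theta)^{-n(m-n) - n(n-1)/2}$, matching the constant in front of $\Phi^{n,m}_\theta$. Combining the pointwise limits, the Vandermonde ratios telescope into the product $\prod_{l=1}^{m-1} \Delta(e^{-\mu^l}, e^{-\mu^{l+1}})^{\theta - 1}/(\Delta(e^{-\mu^l})^{\theta - 1}\Delta(e^{-\mu^{l+1}})^{\theta - 1})$, the crossover at $l = n$ produces the boundary factor $\prod_i (1 - e^{-\mu^n_i})^{\theta-1}/(1 - e^{-\lambda_i})^{\theta-1}$, the $e^{(\theta-1)|\mu^l|}$ factors combine, and the terms $e^{\eps s_l(|\bbmu^l|-|\bbmu^{l-1}|)}$ produce $e^{\sum_l s_l(|\mu^l| - |\mu^{l-1}|)}$. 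Summing against the Riemann mesh reproduces the integral defining $\Phi^{n,m}_\theta(\lambda, s)$.

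The main obstacle is justifying the exchange of the $\eps \to 0$ limit with the Riemann sum, since the pointwise limits of $\psi^l_{\mu^l/\mu^{l-1}}$ carry integrable singularities along the walls $\mu^l_i = \mu^l_{i+1}$, $\mu^l_i = \mu^{l+1}_i$ of $\GT_\lambda$ whenever $\theta < 1$. I would handle this by obtaining an $\eps$-uniform upper bound on the summand in terms of the limiting integrand (times a harmless constant) via the $q$-Pochhammer asymptotics of Lemmas \ref{lem:qpoch-asymp}, \ref{lem:ratio-qpoch}, and \ref{lem:f-lim}, and then apply dominated convergence using local integrability of the limiting expression on $\GT_\lambda$. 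The hypothesis $\lambda_1 > \cdots > \lambda_n > 0$ ensures that the outermost boundary of the polytope is kept at positive distance from any singularity, so only the internal wall singularities need to be controlled, and these are of classical Selberg type with integrability controlled by $\theta > 0$.
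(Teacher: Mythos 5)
Your proposal is correct and follows essentially the same route as the paper, whose proof is simply to combine the iterated branching rule of Proposition \ref{prop:mac-branch} with the limits in Lemmas \ref{lem:mac-branch-lim} and \ref{lem:mac-tbranch-lim}; your power-of-$\eps$ bookkeeping and $\Gamma(\theta)$ count match the normalization of $\Phi^{n,m}_\theta$ exactly. You are in fact more careful than the paper, which leaves the Riemann-sum-to-integral conversion and the domination of the singular branching factors implicit.
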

\begin{proof}
Combine Proposition \ref{prop:mac-branch} and Lemmas \ref{lem:mac-branch-lim} and \ref{lem:mac-tbranch-lim}.
\end{proof}

We now extend this scaling to $Q_\lambda(x; q, t)$.

\begin{lemma} \label{lem:b-ho-scaling}
For $\lambda_1 > \cdots > \lambda_n > 0$, we have the scaling limit
\[
\lim_{\eps \to 0} \eps^{n(\theta - 1)} b_{\lfloor \eps^{-1} (\lambda_1, \ldots, \lambda_n, 0, \ldots, 0)\rfloor}(e^{-\eps}, e^{-\theta\eps}) = \Gamma(\theta)^{-n}\prod_{i = 1}^n (1 - e^{-\lambda_i})^{\theta - 1}.
\]
\end{lemma}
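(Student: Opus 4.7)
The strategy is to apply the product formula~\eqref{eq:mac-norm-val} to $\lambda = \lfloor\eps^{-1}(\lambda_1,\dots,\lambda_n,0,\dots)\rfloor$, compute the asymptotics of each factor in the double product, and observe a telescoping structure. Since $\lambda_l - \lambda_{l+1} = 0$ for $l > n$, the outer product effectively runs only over $l = 1, \dots, n$, and within each level the inner product runs over $i = 0, \dots, l-1$; all other factors are trivially equal to $1$.

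I would treat the $i = 0$ and $i \geq 1$ cases separately. For $i = 0$ and $l \leq n$ the factor reduces to $(t;q)_{\lambda_l - \lambda_{l+1}}/(q;q)_{\lambda_l - \lambda_{l+1}}$, which can be rewritten via the $q$-Gamma function as $\Gamma_q(\theta + N)/(\Gamma_q(\theta)\Gamma_q(N+1))$ with $N = \eps^{-1}(\lambda_l - \lambda_{l+1})$; the $q \to 1^-$ asymptotics collected in the appendix (Lemmas~\ref{lem:qpoch-asymp} and~\ref{lem:ratio-qpoch}) then give
\[
\frac{(t;q)_{\lambda_l - \lambda_{l+1}}}{(q;q)_{\lambda_l - \lambda_{l+1}}} \sim \frac{\eps^{1-\theta}}{\Gamma(\theta)}\bigl(1 - e^{-(\lambda_l - \lambda_{l+1})}\bigr)^{\theta - 1}.
\]
For $i \geq 1$ and $l \leq n$, I would write each finite Pochhammer ratio as a four-fold product of infinite $(\,\cdot\,;q)_\infty$ symbols via $(x;q)_N = (x;q)_\infty/(xq^N;q)_\infty$. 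The leading $\operatorname{Li}_2$ divergences cancel pairwise across the four factors, leaving the $O(1)$ limit
\[
\frac{(t^{i+1} q^{\lambda_{l-i} - \lambda_l};q)_{\lambda_l - \lambda_{l+1}}}{(t^i q^{\lambda_{l-i} - \lambda_l + 1};q)_{\lambda_l - \lambda_{l+1}}} \sim \biggl(\frac{1 - e^{-(\lambda_{l-i} - \lambda_{l+1})}}{1 - e^{-(\lambda_{l-i} - \lambda_l)}}\biggr)^{\theta - 1},
\]
which is legitimate because the hypothesis $\lambda_1 > \cdots > \lambda_n > 0$ keeps each $1 - e^{-(\lambda_{l-i} - \lambda_l)}$ bounded away from $0$.

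The last step is to combine these contributions by grouping terms with the same value of $j := l - i$. For each fixed $j \in \{1,\dots,n\}$, the $i = 0, l = j$ factor contributes $(1 - e^{-(\lambda_j - \lambda_{j+1})})^{\theta - 1}$, while the $i \geq 1$ factors at levels $l = j+1,\dots,n$ telescope:
\[
\prod_{l = j+1}^{n} \biggl(\frac{1 - e^{-(\lambda_j - \lambda_{l+1})}}{1 - e^{-(\lambda_j - \lambda_l)}}\biggr)^{\theta - 1} = \biggl(\frac{1 - e^{-\lambda_j}}{1 - e^{-(\lambda_j - \lambda_{j+1})}}\biggr)^{\theta - 1},
\]
using the convention $\lambda_{n+1} = 0$. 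The two contributions multiply to precisely $(1 - e^{-\lambda_j})^{\theta - 1}$. Taking the product over $j = 1,\dots,n$ and combining with the $\eps^{n(1-\theta)}\Gamma(\theta)^{-n}$ prefactor accumulated from the $i = 0$ layers produces the claim after multiplication by $\eps^{n(\theta - 1)}$. The main obstacle will be the index bookkeeping needed to spot the telescoping cleanly; once the collapse into the $j$-indexed product is identified, the individual asymptotic estimates are routine applications of the appendix lemmas.
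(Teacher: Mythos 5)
Your proposal is correct and follows essentially the same route as the paper's proof: expand $b_\lambda$ via the product formula~\eqref{eq:mac-norm-val}, apply the appendix asymptotics (Lemmas~\ref{lem:qpoch-asymp}, \ref{lem:ratio-qpoch}, \ref{lem:fin-qpoch-asymp}) separately to the $i=0$ and $i\geq 1$ factors, and telescope. The only difference is cosmetic: you make the telescoping explicit by reindexing over $j = l - i$, whereas the paper simply asserts the collapse of the double product.
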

\begin{proof}
By Lemma \ref{lem:fin-qpoch-asymp}, for $q = e^{-\eps}$, $t = e^{-\theta \eps}$, and $i > 0$, we have that
\[
\lim_{\eps \to 0} \frac{(t^{i + 1} q^{\lambda_{l - i} - \lambda_l}; q)_{\lambda_l - \lambda_{l + 1}}}{(t^i q^{\lambda_{l - i} - \lambda_l + 1}; q)_{\lambda_l - \lambda_{l + 1}}} = \frac{(1 - e^{\lambda_l - \lambda_{l - i}})^{-\theta + 1}}{(1 - e^{\lambda_{l + 1} - \lambda_{l - i}})^{-\theta + 1}}
\]
and 
\[
\lim_{\eps \to 0} \eps^{\theta - 1} \frac{(t; q)_{\lambda_l - \lambda_{l + 1}}}{(q; q)_{\lambda_l - \lambda_{l + 1}}} = \Gamma(\theta)^{-1} (1 - e^{\lambda_{l + 1} - \lambda_l})^{\theta - 1}.
\]
We conclude that 
\begin{align*}
\lim_{\eps \to 0} \eps^{n(\theta - 1)} b_{\lfloor \eps^{-1} (\lambda_1, \ldots, \lambda_n, 0, \ldots, 0)\rfloor}(e^\eps, e^{\theta\eps}) &= \Gamma(\theta)^{-n} \prod_{l = 1}^n (1 - e^{\lambda_{l + 1} - \lambda_l})^{\theta - 1} \prod_{i = 1}^{l - 1} \frac{(1 - e^{\lambda_l - \lambda_{l - i}})^{-\theta + 1}}{(1 - e^{\lambda_{l + 1} - \lambda_{l - i}})^{-\theta + 1}}\\
& = \Gamma(\theta)^{-n}\prod_{i = 1}^n (1 - e^{-\lambda_i})^{\theta - 1}. \qedhere
\end{align*}
\end{proof}

\begin{corr} \label{corr:mac-q-lim}
If $\lambda = (\lambda_1, \ldots, \lambda_n, 0, \ldots, 0)$ and $s = (s_1, \ldots, s_m)$ with $\lambda_1 > \cdots > \lambda_n > 0$, then 
\[
\lim_{\eps \to 0} \eps^{\theta(n(m - n) + n (n - 1)/2) + n (\theta - 1)} Q_{\eps^{-1}\lambda}(e^{\eps s_1}, \ldots, e^{\eps s_m}; e^{-\eps}, e^{-\theta \eps}) = \Gamma(\theta)^{-n} \prod_{i = 1}^n (1 - e^{-\lambda_i})^{\theta - 1} \Phi^{n, m}_\theta(\lambda, s).
\]
\end{corr}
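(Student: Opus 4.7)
The plan is to reduce Corollary \ref{corr:mac-q-lim} to the two scaling limits that have just been established, namely Corollary \ref{corr:mac-lim} for $P_{\eps^{-1}\lambda}$ and Lemma \ref{lem:b-ho-scaling} for $b_{\eps^{-1}\lambda}$. The starting point is the identity $Q_\lambda(x; q, t) = b_\lambda(q, t) P_\lambda(x; q, t)$ recalled at the beginning of Section \ref{sec:mvb-ho}, which factors the dual Macdonald polynomial into a $x$-independent normalization and the Macdonald polynomial itself.

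Concretely, I would write
\[
\eps^{\theta(n(m - n) + n(n - 1)/2) + n(\theta - 1)} Q_{\eps^{-1}\lambda}(e^{\eps s_1}, \ldots, e^{\eps s_m}; e^{-\eps}, e^{-\theta\eps})
\]
as the product
\[
\bigl(\eps^{n(\theta - 1)} b_{\eps^{-1}\lambda}(e^{-\eps}, e^{-\theta\eps})\bigr) \cdot \bigl(\eps^{\theta(n(m - n) + n(n - 1)/2)} P_{\eps^{-1}\lambda}(e^{\eps s_1}, \ldots, e^{\eps s_m}; e^{-\eps}, e^{-\theta\eps})\bigr),
\]
after which passing $\eps \to 0$ factor by factor (each factor having an already-known finite, nonzero limit) yields the claimed product
\[
\Gamma(\theta)^{-n}\prod_{i = 1}^n (1 - e^{-\lambda_i})^{\theta - 1} \cdot \Phi^{n, m}_\theta(\lambda, s).
\]
The exponent of $\eps$ on the left is obtained by adding the two scaling exponents of Corollary \ref{corr:mac-lim} and Lemma \ref{lem:b-ho-scaling}, and the ordering $\lambda_1 > \cdots > \lambda_n > 0$ ensures that neither limit factor vanishes or blows up, so multiplying them presents no issue.

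There is essentially no substantive obstacle here; the only thing to be careful about is bookkeeping the exponents of $\eps$ and checking that one uses the formula for $b_\lambda$ from \eqref{eq:mac-norm-val} with $\lambda$ having $\ell(\lambda) = n$ (including the trailing zeros) in exactly the form assumed in Lemma \ref{lem:b-ho-scaling}. Once this matching is confirmed, the corollary follows immediately from the product of limits.
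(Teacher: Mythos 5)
Your proposal is correct and is exactly the argument the paper intends (the corollary is stated without proof precisely because it is immediate from $Q_\lambda = b_\lambda P_\lambda$ together with Corollary \ref{corr:mac-lim} and Lemma \ref{lem:b-ho-scaling}). The exponent on the left is indeed the sum of the two scaling exponents, and the limit is the product of the two limits, so nothing further is needed.
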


\begin{corr} \label{corr:mac-qeval-lim}
For $q = e^{-\eps}$ and $t = e^{-\theta\eps}$, we have 
\begin{multline*}
\lim_{\eps \to 0} \eps^{\theta ((m - n)n + n(n - 1)/2) + n (\theta - 1)} Q_{\eps^{-1}\lambda}(1, t, \ldots, t^{m - 1}; q, t) \\= \frac{1}{\Gamma(m\theta) \cdots \Gamma((m - n + 1)\theta)} \Delta(e^{-\lambda})^\theta \prod_{i = 1}^n (1 - e^{-\lambda_i})^{\theta (m - n + 1) - 1}.
\end{multline*}
\end{corr}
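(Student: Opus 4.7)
The plan is to combine Lemma \ref{lem:mac-eval-lim} with Lemma \ref{lem:b-ho-scaling} via the normalization identity $Q_\lambda(x; q, t) = b_\lambda(q, t) P_\lambda(x; q, t)$ recorded in \eqref{eq:mac-norm-val}. Since the evaluation at $(1, t, \ldots, t^{m-1})$ and the coefficient $b_\lambda$ are completely decoupled in this factorization, the limit in question factors as the product of two limits that have already been computed.

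Concretely, I would first write
\[
\eps^{\theta((m-n)n + n(n-1)/2) + n(\theta - 1)} Q_{\eps^{-1}\lambda}(1, t, \ldots, t^{m-1}; q, t) = \bigl(\eps^{n(\theta - 1)} b_{\eps^{-1}\lambda}(q, t)\bigr) \cdot \bigl(\eps^{\theta((m-n)n + n(n-1)/2)} P_{\eps^{-1}\lambda}(1, t, \ldots, t^{m-1}; q, t)\bigr),
\]
and then send $\eps \to 0$ in each factor separately. Lemma \ref{lem:b-ho-scaling} gives the limit of the first factor as $\Gamma(\theta)^{-n} \prod_{i=1}^n (1 - e^{-\lambda_i})^{\theta - 1}$, while Lemma \ref{lem:mac-eval-lim} gives the limit of the second factor as $\tfrac{\Gamma(\theta)^n}{\Gamma(m\theta) \cdots \Gamma((m-n+1)\theta)} \Delta(e^{-\lambda})^\theta \prod_{i=1}^n (1 - e^{-\lambda_i})^{\theta(m-n)}$.

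Multiplying these, the factors of $\Gamma(\theta)^{\pm n}$ cancel and the exponents on $(1 - e^{-\lambda_i})$ combine as $\theta(m-n) + (\theta - 1) = \theta(m - n + 1) - 1$, producing exactly the right-hand side of the claimed identity. There is no genuine obstacle here: the only thing to verify is that the chosen scaling exponent in $\eps$ matches the sum of scaling exponents from the two invoked lemmas, which is immediate by inspection. Thus the proof is a one-line combination of the two previous lemmas.
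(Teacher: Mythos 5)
Your proposal is correct and is exactly the argument the paper intends: the corollary is stated without proof precisely because it follows by multiplying the limit of $b_{\eps^{-1}\lambda}$ from Lemma \ref{lem:b-ho-scaling} with the principal-specialization limit of $P_{\eps^{-1}\lambda}$ from Lemma \ref{lem:mac-eval-lim}, using $Q_\lambda = b_\lambda P_\lambda$. The cancellation of $\Gamma(\theta)^{\pm n}$ and the combination of exponents $\theta(m-n) + (\theta-1) = \theta(m-n+1)-1$ are verified correctly.
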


Using these limit transitions, we may translate the evaluation and Cauchy identities for Macdonald polynomials to Heckman-Opdam hypergeometric functions.

\begin{corr} \label{corr:mac-ho-scale}
For $n \leq m$, we have the scalings
\[
\lim_{\eps \to 0} \frac{P_{\lfloor \eps^{-1} (\lambda_1, \ldots, \lambda_n, 0, \ldots, 0)\rfloor}(e^{\eps s_1}, \ldots, e^{\eps s_m}; e^{-\eps}, e^{-\theta \eps})}{P_{\lfloor \eps^{-1}(\lambda_1, \ldots, \lambda_n, 0, \ldots, 0)\rfloor}(1, e^{-\theta \eps}, \ldots, e^{-(m - 1)\theta \eps}; e^{-\eps}, e^{-\theta\eps})} = e^{\frac{n - 1}{2} \theta |\lambda|}\FF^{n, m}_\beta(\lambda, s)
\]
and
\[
\lim_{\eps \to 0} \frac{Q_{\lfloor \eps^{-1} (\lambda_1, \ldots, \lambda_n, 0, \ldots, 0)\rfloor}(e^{\eps s_1}, \ldots, e^{\eps s_m}; e^{-\eps}, e^{-\theta \eps})}{P_{\lfloor \eps^{-1}(\lambda_1, \ldots, \lambda_n, 0, \ldots, 0)\rfloor}(1, e^{-\theta \eps}, \ldots, e^{-(m - 1)\theta \eps}; e^{-\eps}, e^{-\theta\eps})} = e^{\frac{n - 1}{2} \theta |\lambda|}\wF^{n, m}_\beta(\lambda, s).
\]
\end{corr}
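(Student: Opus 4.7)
My plan is to obtain both scalings as immediate consequences of the scaling limits for Macdonald polynomials that have already been established earlier in this section. The only genuinely new input will be the identity $\Delta^\trig(\lambda)^\theta = e^{\frac{n-1}{2}\theta|\lambda|}\Delta(e^{-\lambda})^\theta$ recorded in Section 2.1, which accounts for the exponential prefactor $e^{\frac{n-1}{2}\theta|\lambda|}$ on the right-hand side.

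For the first identity, I would combine Corollary \ref{corr:mac-lim}, which identifies the rescaled limit of $P_{\eps^{-1}\lambda}(e^{\eps s_1}, \ldots, e^{\eps s_m}; q, t)$ with $\Phi^{n,m}_\theta(\lambda, s)$, with Lemma \ref{lem:mac-eval-lim}, which computes the corresponding scaling limit for the principal specialization $P_{\eps^{-1}\lambda}(1, t, \ldots, t^{m-1}; q, t)$. The two scaling exponents of $\eps$ coincide, so dividing the first limit by the second cancels the $\eps$ factors and yields a well-defined ratio. Substituting $\Delta^\trig(\lambda)^\theta = e^{\frac{n-1}{2}\theta|\lambda|}\Delta(e^{-\lambda})^\theta$ and matching against the definition \eqref{eq:ho-def} of $\FF^{n,m}_\beta$, I would observe that the Gamma-function prefactor $\Gamma(m\theta)\cdots\Gamma((m-n+1)\theta)/\Gamma(\theta)^n$ from \eqref{eq:ho-def} cancels exactly the reciprocal Gamma-function factor produced in the denominator limit, leaving $e^{\frac{n-1}{2}\theta|\lambda|}\FF^{n,m}_\beta(\lambda, s)$.

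For the second identity, I would repeat the same procedure with Corollary \ref{corr:mac-q-lim} replacing Corollary \ref{corr:mac-lim}. The passage from $P_\lambda$ to $Q_\lambda = b_\lambda(q,t) P_\lambda$ introduces the additional factor $\Gamma(\theta)^{-n}\prod_{i=1}^n(1-e^{-\lambda_i})^{\theta-1}$ in the limit, as computed in Lemma \ref{lem:b-ho-scaling}. This prefactor is exactly the one demanded by the definition \eqref{eq:ho-conj-def} of $\wF^{n,m}_\beta$, so the ratio converts from $e^{\frac{n-1}{2}\theta|\lambda|}\FF^{n,m}_\beta(\lambda, s)$ into $e^{\frac{n-1}{2}\theta|\lambda|}\wF^{n,m}_\beta(\lambda, s)$.

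I do not expect any substantive obstacle: the difficult asymptotic work has already been done in Lemmas \ref{lem:mac-branch-lim}, \ref{lem:mac-tbranch-lim}, and \ref{lem:b-ho-scaling}, and all that remains is the algebraic bookkeeping of Gamma-function prefactors and of the $\Delta^\trig(\lambda)^\theta / \Delta(e^{-\lambda})^\theta$ exponential conversion. The mildly delicate point is to keep careful track of the Gamma-function ratio $\Gamma(m\theta)\cdots\Gamma((m-n+1)\theta)/\Gamma(\theta)^n$ from \eqref{eq:ho-def} against the constant $\Gamma(\theta)^n/(\Gamma(m\theta)\cdots\Gamma((m-n+1)\theta))$ arising in Lemma \ref{lem:mac-eval-lim}, but these are reciprocals and cancel transparently.
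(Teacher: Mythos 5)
Your approach is exactly the one the paper intends (the corollary is stated there without proof, as an immediate consequence of the preceding limits), and your derivation of the first identity is correct: the powers of $\eps$ in Corollary~\ref{corr:mac-lim} and Lemma~\ref{lem:mac-eval-lim} coincide, the Gamma factors are reciprocal to those in \eqref{eq:ho-def}, and $\Delta^\trig(\lambda)^\theta = e^{\frac{n-1}{2}\theta|\lambda|}\Delta(e^{-\lambda})^\theta$ supplies the exponential prefactor.

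For the second identity, however, you gloss over a normalization that your own cited lemmas make visible. Writing $Q_{\lfloor\eps^{-1}\lambda\rfloor} = b_{\lfloor\eps^{-1}\lambda\rfloor}\,P_{\lfloor\eps^{-1}\lambda\rfloor}$, Lemma~\ref{lem:b-ho-scaling} gives $b_{\lfloor\eps^{-1}\lambda\rfloor}\sim \eps^{-n(\theta-1)}\,\Gamma(\theta)^{-n}\prod_{i=1}^n(1-e^{-\lambda_i})^{\theta-1}$, so the ratio $Q/P$ picks up not just the constant $\Gamma(\theta)^{-n}\prod_i(1-e^{-\lambda_i})^{\theta-1}$ but also a divergent (or vanishing) factor $\eps^{-n(\theta-1)}$ whenever $\theta\neq 1$. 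Equivalently, the scaling exponent in Corollary~\ref{corr:mac-q-lim} exceeds that of Lemma~\ref{lem:mac-eval-lim} by $n(\theta-1)$, so the two powers of $\eps$ do not cancel in the quotient. The limit as displayed therefore only exists after inserting the prefactor $\eps^{n(\theta-1)}$ on the left-hand side (this appears to be an omission in the statement itself, which your writeup inherits rather than resolves); your assertion that the passage from $P$ to $Q$ ``introduces the additional factor $\Gamma(\theta)^{-n}\prod_{i=1}^n(1-e^{-\lambda_i})^{\theta-1}$ in the limit'' silently discards that power of $\eps$. You should either include the $\eps^{n(\theta-1)}$ normalization explicitly or note that the second identity is to be read with that convention.
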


\begin{prop} \label{prop:ho-cauchy}
For $m \geq n$ and parameters $s = (s_1, \ldots, s_m)$ and $r = (r_1, \ldots, r_n)$, we have
\begin{multline*}
\int_{\lambda_1 \geq \cdots \geq \lambda_n \geq 0} \FF_\beta^{n, m}(\lambda, s) \wF_\beta^{n, n}(\lambda, r) \Delta(e^{-\lambda})^{2\theta} e^{(n - 1)\theta |\lambda|} \prod_{i = 1}^n (1 - e^{-\lambda_i})^{\theta(m - n)}d\lambda\\
 = \frac{\Gamma(m \theta) \cdots \Gamma((m - n + 1)\theta) \Gamma(n\theta) \cdots \Gamma(\theta)}{\Gamma(\theta)^{2n}}\prod_{i = 1}^m \prod_{j = 1}^n \frac{\Gamma(-s_i - r_j)}{\Gamma(\theta - s_i - r_j)}.
\end{multline*}
\end{prop}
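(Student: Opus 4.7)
The plan is to obtain the identity as a scaling limit of the Macdonald Cauchy identity in Proposition \ref{prop:mac-cauchy} under the specialization $q = e^{-\eps}$, $t = e^{-\theta\eps}$, $x_i = e^{\eps s_i}$, $y_j = e^{\eps r_j}$, with $\lambda$ of length at most $n$ rescaled via $\lambda \mapsto \lfloor \eps^{-1}(\lambda_1, \ldots, \lambda_n, 0, \ldots, 0)\rfloor$ so that the discrete sum over $\lambda$ converges to a Riemann sum approximating $\eps^{-n} \int_{\lambda_1 \geq \cdots \geq \lambda_n \geq 0} d\lambda$. The left-hand side is handled by combining Corollary \ref{corr:mac-lim} and Corollary \ref{corr:mac-q-lim}, which together with the definitions \eqref{eq:ho-def} and \eqref{eq:ho-conj-def} convert $P_{\eps^{-1}\lambda} Q_{\eps^{-1}\lambda}$ into an expression proportional to $\FF_\beta^{n,m}(\lambda, s) \wF_\beta^{n,n}(\lambda, r) \Delta^\trig(\lambda)^{2\theta} \prod_i (1 - e^{-\lambda_i})^{\theta(m-n)}$, with a prefactor of $\Gamma(\theta)^{2n} / (\Gamma(m\theta) \cdots \Gamma((m-n+1)\theta) \Gamma(n\theta) \cdots \Gamma(\theta))$. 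Using $\Delta^\trig(\lambda)^{2\theta} = e^{(n-1)\theta|\lambda|} \Delta(e^{-\lambda})^{2\theta}$ recovers precisely the integrand on the left of the claim.

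For the right-hand side, I would apply the standard $q$-gamma asymptotic $\Gamma_q(a) = (1-q)^{1-a} (q;q)_\infty / (q^a;q)_\infty \to \Gamma(a)$ as $q \to 1^-$, which yields
\[
\frac{(t x_i y_j; q)}{(x_i y_j; q)} = \frac{(e^{-\eps(\theta - s_j - r_i)}; e^{-\eps})_\infty}{(e^{\eps(s_j + r_i)}; e^{-\eps})_\infty} = (1-q)^{-\theta} \frac{\Gamma_q(-s_j - r_i)}{\Gamma_q(\theta - s_j - r_i)} \longrightarrow \eps^{-\theta} \frac{\Gamma(-s_j - r_i)}{\Gamma(\theta - s_j - r_i)}.
\]
Taking the product over all $i, j$ gives a factor $\eps^{-\theta m n}$ times the desired gamma ratio. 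A direct bookkeeping check confirms that this $\eps^{-\theta m n}$ precisely balances the combined scaling $\eps^{-\theta n(m-n) - \theta n(n-1) - n(\theta-1) - n}$ coming from the Macdonald asymptotics together with the $\eps^{-n}$ from converting the sum to an integral; rearranging the gamma-function prefactors produces the stated formula.

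The main technical obstacle is justifying the passage to the limit of the infinite sum on the left: one must show that the sum $\sum_{\ell(\lambda) \le n} P_\lambda Q_\lambda$, after rescaling, converges to the integral of the pointwise limit. This requires (i) uniform convergence of the scaled integrand on compact sets in the open region $\{\lambda_1 > \cdots > \lambda_n > 0\}$, which follows from Corollaries \ref{corr:mac-lim}--\ref{corr:mac-q-lim} since those limits are uniform on such compacta, together with (ii) a tail and boundary estimate providing a dominating function integrable near the walls $\lambda_i = \lambda_{i+1}$, near $\lambda_n = 0$, and near $\lambda_1 \to \infty$. The needed bounds can be obtained by exploiting the non-negativity of the Macdonald specializations involved and using convergence of the right-hand side $\prod (tx_iy_j;q)/(x_iy_j;q)$ (which controls the sum uniformly in $\eps$) to justify dominated convergence; this is the standard mechanism used in \cite{BG} and parallel arguments in Macdonald process limits. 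Once this analytic step is in place, the identity follows immediately by matching both sides of the rescaled Cauchy identity.
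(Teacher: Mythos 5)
Your proposal is correct and follows exactly the paper's route: the paper's proof is a one-line citation of the Macdonald Cauchy identity (Proposition \ref{prop:mac-cauchy}) together with Corollaries \ref{corr:mac-lim} and \ref{corr:mac-q-lim} for the left side and Lemma \ref{lem:ratio-qpoch} for the right side, which is precisely your argument, and your power-of-$\eps$ bookkeeping ($\eps^{\theta mn}$ on both sides) checks out. The only difference is that you spell out the dominated-convergence justification for interchanging the limit with the sum/integral, which the paper leaves implicit.
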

\begin{proof}
By applying Proposition \ref{prop:mac-cauchy}, Corollaries \ref{corr:mac-lim} and \ref{corr:mac-q-lim}, and Lemma \ref{lem:ratio-qpoch}.
\end{proof}

\subsection{Definition of the multivariate Bessel function} \label{sec:mvb-def}

For $\lambda = (\lambda_1 > \cdots > \lambda_n)$ and $s = (s_1, \ldots, s_m)$ with $n \leq m$, define the integral formula
\begin{multline} \label{eq:int-rat}
\phi^{n, m}_\theta(\lambda, s) = \Gamma(\theta)^{-\frac{n(n - 1)}{2} - n(m - n)}\int_{\mu \in \GT_\lambda}\!\!\!\!\!\!\!\!\!\! e^{\sum_{l = 1}^m s_l(|\mu^l| - |\mu^{l-1}|)}\! \\\prod_{i = 1}^n \frac{(\mu_i^n)^{\theta - 1}}{\lambda_i^{\theta - 1}} \prod_{l = 1}^{m - 1} \frac{\Delta(\mu^l, \mu^{l + 1})^{\theta - 1}}{\Delta(\mu^l)^{\theta - 1} \Delta(\mu^{l + 1})^{\theta - 1}} \prod_{l = 1}^{m - 1} \prod_{i = 1}^{\min\{l, n\}} d\mu^l_i.
\end{multline}
For $n \leq m$, the multivariate Bessel function is defined for ordered and distinct $\lambda$ by
\begin{equation} \label{eq:mvb-int}
\BB^{n, m}_\beta(\lambda, s) := \frac{\Gamma(m\theta) \cdots \Gamma((m - n + 1)\theta)}{\Gamma(\theta)^n} \frac{\phi_\theta(\lambda, s)}{\Delta(\lambda)^\theta \prod_i \lambda_i^{\theta(m - n)}}
\end{equation}
and is extended to all $\lambda$ by continuation.  Define also the conjugate multivariable Bessel function by 
\[
\wB^{n, m}_\beta(\lambda, s) := \Gamma(\theta)^{-n}\prod_{i = 1}^n \lambda_i^{\theta - 1} \BB^{n, m}_\beta(\lambda, s).
\]

\begin{remark}
This integral form for the multivariable Bessel function first appeared in \cite[Section V]{GK}.  We have adjusted the normalization of $\BB_\beta^{n, m}(\lambda, s)$ from \cite{GK} so that $\BB_\beta^{n, m}(\lambda, 0) = 1$.
\end{remark}

From the integral formula, it is straightforward to show the following identities involving the multivariate Bessel function.  We give also a scaling limit from Heckman-Opdam functions to multivariate Bessel functions which yields a Cauchy identity.

\begin{prop} \label{prop:mvb-prop}
For $m \geq n$, the multivariate Bessel function $\BB_\beta^{n, m}(\lambda, s)$ satisfies
\begin{itemize}
\item $\BB_\beta^{n, m}(0, s) = \BB_\beta^{n, m}(\lambda, 0) = 1$;

\item $\BB_\beta^{n, m}(\lambda, s) = \BB_\beta^{m, m}(s, \lambda)$;

\item $c^{\theta(n(m - n) + n(n-1)/2)}\BB_\beta^{n, m}(\lambda, cs) = \BB_\beta^{n, m}(c \lambda, s)$.
\end{itemize}
\end{prop}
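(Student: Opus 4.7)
Each of the three identities follows from the integral representation \eqref{eq:int-rat}, and the plan is to handle them in turn; the evaluation $\BB^{n,m}_\beta(\lambda,0)=1$ requires a nontrivial Selberg-type computation, while the other two reduce largely to bookkeeping.

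For the scaling identity, I would rescale $\mu^l_i = c\nu^l_i$ in \eqref{eq:int-rat}: the map $\GT_{c\lambda}\to\GT_\lambda$ is a bijection, the exponential $\exp\bigl(\sum_l s_l(|\mu^l|-|\mu^{l-1}|)\bigr)$ becomes $\exp\bigl(c\sum_l s_l(|\nu^l|-|\nu^{l-1}|)\bigr)$ (which is the exponential in $\phi^{n,m}_\theta(\lambda,cs)$), the $(n(m-n)+n(n-1)/2)$-dimensional Lebesgue measure picks up $c^{n(m-n)+n(n-1)/2}$, and counting numerator versus denominator degrees in the Vandermonde and $(\mu^n_i/\lambda_i)^{\theta-1}$ factors yields a further $c^{(\theta-1)(n(m-n)+n(n-1)/2)}$. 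Combining with the scaling of the outer prefactor $\Delta(\lambda)^\theta \prod_i \lambda_i^{\theta(m-n)}$ in \eqref{eq:mvb-int} then produces the stated identity.

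For $\BB^{n,m}_\beta(\lambda, 0) = 1$, setting $s=0$ in \eqref{eq:int-rat} kills the exponential and reduces the problem to evaluating
\[
\int_{\GT_\lambda}\prod_{i=1}^n (\mu^n_i)^{\theta-1}\prod_{l=1}^{m-1}\frac{\Delta(\mu^l,\mu^{l+1})^{\theta-1}}{\Delta(\mu^l)^{\theta-1}\Delta(\mu^{l+1})^{\theta-1}}\,\prod_{l=1}^{m-1}\prod_{i=1}^{\min\{l,n\}} d\mu^l_i.
\]
I would carry this out inductively, peeling off one Gelfand--Tsetlin level at a time via the Dixon--Anderson $A_n$-type Selberg integral; the accumulated Gamma-function prefactors should cancel exactly the normalization in \eqref{eq:mvb-int}. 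The companion identity $\BB^{n,m}_\beta(0,s) = 1$ then follows by continuity in $\lambda$: the full expression in \eqref{eq:mvb-int} extends continuously across the apparent singularities of the prefactor at $\lambda = 0$, and at $\lambda = 0$ the Gelfand--Tsetlin polytope collapses to a single point on which the integrand is trivial.

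For the symmetry $\BB^{n,m}_\beta(\lambda, s) = \BB^{m,m}_\beta(s, \lambda)$, which I read as the right-hand side referring to $\BB^{m,m}_\beta\bigl(s, (\lambda_1,\ldots,\lambda_n, 0,\ldots,0)\bigr)$ with $m-n$ appended zeros, I would expand the right-hand side via \eqref{eq:int-rat}: the interlacing constraints forced by the trailing zeros collapse the relevant GT coordinates to zero, the effective integration domain reduces to $\GT_\lambda$, and a term-by-term match of surviving Vandermonde and $(\theta-1)$-power factors identifies the two integrands. An alternative route is to transport the classical Macdonald principal duality $P_\lambda(q^{\mu+\rho};q,t)/P_\lambda(q^\rho;q,t) = P_\mu(q^{\lambda+\rho};q,t)/P_\mu(q^\rho;q,t)$ through the Macdonald-to-Heckman-Opdam-to-Bessel limits developed in Section \ref{sec:mvb-ho}. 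The main obstacle I anticipate is the bookkeeping in the Selberg-type evaluation: the accumulated Gamma factors from successive Gelfand--Tsetlin levels must reassemble into exactly the constant in \eqref{eq:mvb-int}, with the transition at $l = n$ from triangular to rectangular levels requiring particular care.
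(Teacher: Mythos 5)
The paper gives no written proof here (the proposition is asserted to be ``straightforward to show'' from the integral formula), so an expanded argument is welcome, and your Dixon--Anderson level-by-level evaluation for $\BB^{n,m}_\beta(\lambda,0)=1$ is the right mechanism --- the normalization in \eqref{eq:mvb-int} is chosen exactly so that those Gamma factors cancel. However, your treatment of the other two bullets has genuine problems. For the scaling identity, carry your own bookkeeping to the end: the substitution $\mu = c\nu$ contributes $c^{D}$ from Lebesgue measure and $c^{(\theta-1)D}$ from the integrand, where $D = n(m-n)+n(n-1)/2$ (the $l$-th cross-Vandermonde ratio is homogeneous of degree $\min\{l,n\}$, and $\sum_{l=1}^{m-1}\min\{l,n\}=D$), so $\phi^{n,m}_\theta(c\lambda,s)=c^{\theta D}\phi^{n,m}_\theta(\lambda,cs)$. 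But the prefactor $\Delta(\lambda)^{\theta}\prod_i\lambda_i^{\theta(m-n)}$ in the denominator of \eqref{eq:mvb-int} is itself homogeneous of degree $\theta D$, so the powers cancel and one obtains $\BB^{n,m}_\beta(c\lambda,s)=\BB^{n,m}_\beta(\lambda,cs)$ with no prefactor; this is confirmed by the explicit case $\BB^{1,2}_2(\lambda,s)=(e^{s_1\lambda_1}-e^{s_2\lambda_1})/(\lambda_1(s_1-s_2))$. The displayed power $c^{\theta D}$ is the homogeneity of $\phi^{n,m}_\theta$, not of $\BB^{n,m}_\beta$, so your claim that the computation ``produces the stated identity'' is not borne out by the computation; you should have flagged the discrepancy rather than asserted agreement.

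For the symmetry, your primary argument does not work: in $\BB^{m,m}_\beta(s,\lambda)$ the Gelfand--Tsetlin patterns are subordinate to the \emph{first} argument $s$, while the padded $\lambda$ enters only through the exponential weights $e^{\sum_l \lambda_l(|\mu^l|-|\mu^{l-1}|)}$, so the trailing zeros impose no interlacing constraints and nothing collapses. What the collapse argument actually proves is the stability statement $\BB^{n,m}_\beta(\lambda,s)=\BB^{m,m}_\beta((\lambda,0,\ldots,0),s)$, with the zeros in the first slot; the genuine symmetry exchanges the polytope variable with the exponential variable and is not visible term-by-term in \eqref{eq:int-rat}. Your fallback --- transporting the Macdonald principal-specialization duality through Corollaries \ref{corr:mac-lim} and \ref{corr:mac-ho-scale}, or, at $\beta=1,2$, invoking the manifestly symmetric orbital-integral representation of Lemmas \ref{lem:hciz-complex} and \ref{lem:hciz-real} --- is the viable route and should be the main argument. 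Once the symmetry is available, $\BB^{n,m}_\beta(0,s)=\BB^{m,m}_\beta(s,0)=1$ follows immediately, which is cleaner than your continuity argument at the $0/0$ point $\lambda=0$.
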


\begin{prop} \label{prop:ho-mvb-scale}
We have the scaling limits
\begin{align*}
\lim_{\eps \to 0} \FF_\beta^{n, m}(\eps \lambda, \eps^{-1} s) &= \BB_\beta^{n, m}(\lambda, s)\\
\lim_{\eps \to 0} \eps^{- (\theta - 1)n} \wF_\beta^{n, m}(\eps \lambda, \eps^{-1} s) &= \wB_\beta^{n, m}(\lambda, s)
\end{align*}
\end{prop}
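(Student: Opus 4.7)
The plan is to take the scaling limit directly inside the integral formula for $\FF_\beta^{n,m}$. Starting from definition \eqref{eq:ho-def}, I would substitute $\lambda \to \eps\lambda$ and $s \to \eps^{-1}s$ in $\Phi^{n,m}_\theta$ and then perform the change of variables $\mu^l \mapsto \eps \mu^l$, which carries $\GT_{\eps\lambda}$ bijectively onto $\GT_\lambda$ with Jacobian $\eps$ per coordinate. Under this substitution the exponential weight $e^{\sum_l \eps^{-1} s_l(|\eps\mu^l| - |\eps\mu^{l-1}|)}$ is already $\eps$-independent and matches the multivariate Bessel exponential, the factors $e^{(\theta - 1)|\eps\mu^l|}$ tend to $1$, and the Taylor expansions $1 - e^{-\eps x} = \eps x(1 + O(\eps))$ and $e^{-\eps x} - e^{-\eps y} = -\eps(x-y)(1 + O(\eps))$ convert each remaining trigonometric factor in the integrand into its rational analog from \eqref{eq:int-rat}, modulo explicit powers of $\eps$.

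The next step is to verify that these $\eps$ powers cancel. Writing $n_l := \min\{l, n\}$, the Vandermonde ratio at level $l$ scales as $\eps^{(\theta - 1)[n_l n_{l+1} - \binom{n_l}{2} - \binom{n_{l+1}}{2}]}$, and a short calculation shows that the sum over $l = 1, \ldots, m - 1$ equals $(\theta - 1)[\tfrac{n(n-1)}{2} + n(m-n)]$. Combining this with the Jacobian $\eps^{\frac{n(n-1)}{2} + n(m-n)}$ and noting that the ratio $\prod_i (1 - e^{-\eps\mu^n_i})^{\theta-1}/(1 - e^{-\eps\lambda_i})^{\theta-1}$ contributes no net $\eps$ power, I obtain $\Phi^{n,m}_\theta(\eps\lambda, \eps^{-1} s) \sim \eps^{\theta[\frac{n(n-1)}{2} + n(m-n)]} \phi^{n,m}_\theta(\lambda, s)$. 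The normalization $\Delta^\trig(\eps\lambda)^\theta \prod_i(1 - e^{-\eps\lambda_i})^{\theta(m-n)} \sim \eps^{\theta[\binom{n}{2} + n(m-n)]} \Delta(\lambda)^\theta \prod_i \lambda_i^{\theta(m-n)}$ in \eqref{eq:ho-def} absorbs this factor exactly, yielding the first limit. For the second identity I would invoke the definitions \eqref{eq:ho-conj-def} and that of $\wB_\beta^{n,m}$: the only extra $\lambda$-dependent factor is $\prod_i(1 - e^{-\eps\lambda_i})^{\theta-1} \sim \eps^{n(\theta - 1)}\prod_i \lambda_i^{\theta - 1}$, which is precisely the source of the $\eps^{-n(\theta - 1)}$ prefactor in the claimed scaling.

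The main technical obstacle is justifying the interchange of limit and integral over $\GT_\lambda$. Since the polytope is compact, it suffices to exhibit an $\eps$-uniform integrable dominating function. This follows from the elementary estimates $|1 - e^{-\eps x}| \leq \eps|x|$ and $|e^{-\eps x} - e^{-\eps y}| \leq \eps|x - y|$, valid uniformly for small $\eps > 0$ on the bounded region $\GT_\lambda$. The resulting bound is an $\eps$-independent multiple of the rational integrand of $\phi^{n,m}_\theta$, whose only singularities $|\mu^l_i - \mu^{l+1}_j|^{\theta - 1}$ are boundary-integrable for any $\theta > 0$, so dominated convergence applies.
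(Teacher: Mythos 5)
Your proof is correct and takes essentially the same route as the paper, whose proof consists of the single sentence ``by taking explicit scaling limits in the integral expressions''; your change of variables, power-of-$\eps$ bookkeeping (the sum $\sum_{l=1}^{m-1}[n_l n_{l+1}-\binom{n_l}{2}-\binom{n_{l+1}}{2}]=\frac{n(n-1)}{2}+n(m-n)$ and its cancellation against the Jacobian and the normalizing denominator) all check out. The only small addition needed is that for $\theta<1$ the exponents $\theta-1$ are negative, so the domination step also requires the matching lower bound $|e^{-\eps x}-e^{-\eps y}|\ge \eps|x-y|e^{-\eps M}$ on the bounded polytope, which is immediate from the mean value theorem.
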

\begin{proof}
By taking explicit scaling limits in the integral expressions for $\FF_\beta^{n, m}(\eps\lambda, \eps^{-1}s)$ and $\BB_\beta^{n, m}(\lambda, s)$.
\end{proof}

\begin{prop} \label{prop:mvb-cauchy}
For $n \leq m$, $s = (s_1, \ldots, s_m)$, and $r = (r_1, \ldots, r_n)$, we have
\begin{multline*}
\int_{\lambda_1 \geq \cdots \geq \lambda_n \geq 0} \BB_\beta^{n, m}(\lambda, -s) \wB_\beta^{n, n}(\lambda, -r) \Delta(\lambda)^{2\theta} \prod_{i = 1}^{n} \lambda_i^{\theta (m - n)} d\lambda\\ = \frac{\Gamma(m \theta) \cdots \Gamma((m - n + 1)\theta) \Gamma(n\theta) \cdots \Gamma(\theta)}{\Gamma(\theta)^{2n}} \prod_{i = 1}^m \prod_{j = 1}^n (s_i + r_j)^{-\theta}.
\end{multline*}
\end{prop}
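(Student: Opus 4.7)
The plan is to deduce the multivariate Bessel Cauchy identity by taking a scaling limit of the Heckman--Opdam Cauchy identity (Proposition \ref{prop:ho-cauchy}), in parallel to how Proposition \ref{prop:ho-mvb-scale} degenerates the individual special functions. Concretely, in Proposition \ref{prop:ho-cauchy} I would substitute $\lambda \mapsto \eps \lambda$, $s \mapsto -\eps^{-1} s$, and $r \mapsto -\eps^{-1} r$, and then send $\eps \to 0$.

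On the integrand side, Proposition \ref{prop:ho-mvb-scale} immediately gives
\[
\FF_\beta^{n,m}(\eps\lambda, -\eps^{-1}s) \to \BB_\beta^{n,m}(\lambda, -s), \qquad \eps^{-(\theta-1)n}\wF_\beta^{n,n}(\eps\lambda, -\eps^{-1}r) \to \wB_\beta^{n,n}(\lambda, -r).
\]
For the remaining pieces, I would Taylor expand: $e^{-\eps\lambda_i} - e^{-\eps\lambda_j} = \eps(\lambda_j - \lambda_i) + O(\eps^2)$ yields $\Delta(e^{-\eps\lambda})^{2\theta} \sim \eps^{\theta n(n-1)} \Delta(\lambda)^{2\theta}$; similarly $\prod_i (1-e^{-\eps\lambda_i})^{\theta(m-n)} \sim \eps^{n\theta(m-n)} \prod_i \lambda_i^{\theta(m-n)}$; the factor $e^{(n-1)\theta|\eps\lambda|}$ tends to $1$; and the change of variables contributes $d(\eps\lambda) = \eps^n d\lambda$. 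Collecting the powers of $\eps$ from the integrand gives
\[
\eps^{n + (\theta-1)n + \theta n(n-1) + \theta n(m-n)} = \eps^{\theta mn}.
\]

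On the right-hand side, the ratio of Gamma functions is handled by the classical asymptotic $\Gamma(z+a)/\Gamma(z+b) \sim z^{a-b}$ as $|z| \to \infty$: for each pair $(i,j)$,
\[
\frac{\Gamma(-\eps^{-1}s_i - \eps^{-1}r_j)}{\Gamma(\theta - \eps^{-1}s_i - \eps^{-1}r_j)} \sim \bigl(\eps^{-1}(s_i+r_j)\bigr)^{-\theta} = \eps^{\theta}(s_i+r_j)^{-\theta},
\]
so the product over all $mn$ pairs supplies the matching $\eps^{\theta mn}$, and the constant prefactor $\Gamma(m\theta)\cdots\Gamma((m-n+1)\theta)\Gamma(n\theta)\cdots\Gamma(\theta)/\Gamma(\theta)^{2n}$ passes through unchanged. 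Balancing the two sides of the identity after dividing by $\eps^{\theta mn}$ yields exactly the claimed formula.

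The main obstacle is justifying the interchange of limit and integration. The Heckman--Opdam integrand has integrable behavior near the walls $\lambda_i = \lambda_{i+1}$ and $\lambda_n = 0$, and under the rescaling $\lambda \to \eps\lambda$ the integrand is dominated, on any fixed compact region, by a function independent of $\eps$ (by monotonicity of $(1-e^{-x})/x$ on $x>0$ and of $(e^{-a}-e^{-b})/(b-a)$), so dominated convergence applies locally; for the tails one uses exponential decay coming from $e^{-\theta \sum s_l(|\mu^l|-|\mu^{l-1}|)}$ in the Gelfand--Tsetlin integral representation to control uniformly in $\eps$. A secondary nuisance is the branch choice in $(s_i+r_j)^{-\theta}$: the equality should first be established for parameters with $s_i+r_j>0$ real, where both sides are analytic in $s,r$, and then extended by analytic continuation, exactly as in the identification in Proposition \ref{prop:ho-mvb-scale}.
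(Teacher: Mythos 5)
Your proposal is correct and follows essentially the same route as the paper: the paper's proof is precisely to substitute $(\eps\lambda, \eps^{-1}s, \eps^{-1}r)$ into Proposition \ref{prop:ho-cauchy}, multiply by $\eps^{-\theta mn}$, and let $\eps \to 0$ using Proposition \ref{prop:ho-mvb-scale} and Lemma \ref{lem:gamma-rat}. Your version simply makes the $\eps$-power bookkeeping and the limit-interchange justification explicit, both of which check out.
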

\begin{proof}
In Proposition \ref{prop:ho-cauchy}, substitute $(\eps\lambda, \eps^{-1} s, \eps^{-1}r)$ for $(\lambda, s, r)$, multiply both sides by $\eps^{-\theta mn}$, and take the limit as $\eps \to 0$, applying Proposition \ref{prop:ho-mvb-scale} and Lemma \ref{lem:gamma-rat}.
\end{proof}

\subsection{Multivariate Bessel functions and HCIZ integrals} \label{sec:mvb-hciz}

In this section we relate the multivariate Bessel functions at $\beta = 1, 2$ to integrals over Haar measure on the orthogonal and unitary groups, known as HCIZ integrals.  For real sequences $(a_1, \ldots, a_m)$ and $(b_1, \ldots, b_n)$ with $n \leq m$, we define the complex HCIZ integral by 
\[
h^2_a(b) := \int_{U \in U(N)} e^{-\Tr(UaU^*b)} d\Haar_U
\]
and the real HCIZ integral by 
\[
h^1_a(b) := \int_{V \in O(N)} e^{-\Tr(VaV^Tb)/2} d\Haar_V,
\]
where we interpret $a$ and $b$ as diagonal matrices of length $N$ by padding with $0$'s, and $d\Haar_U$ and $d\Haar_V$ are the Haar measure on the spaces of unitary and orthogonal matrices, respectively.  

\begin{remark}
In the complex case, if all $a$ and $b$ are distinct and $n = m$, the explicit expression 
\[
h^2_a(b) = C^{\text{HCIZ}}_m \frac{\det(e^{-a_i b_j})}{\Delta(a) \Delta(b)}
\]
for some constant $C^{\text{HCIZ}}_m$ for the HCIZ integral was given in the original works of \cite{H1, H2, IZ}.
\end{remark}

In the following two lemmas, we apply the results of \cite{Ner} to identify HCIZ integrals with multivariate Bessel functions at $\beta = 1, 2$.  

\begin{lemma} \label{lem:hciz-complex}
If $a_1 > \cdots > a_m$ are all distinct, then we have
\[
h^2_a(b) = \BB_2^{m, m}(a, -b).
\]
\end{lemma}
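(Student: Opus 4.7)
The plan is to invoke Neretin's Gelfand--Tsetlin integral representation for the complex HCIZ integral and match it directly against the $\theta = 1$ specialization of the defining formula~(\ref{eq:int-rat}) for $\BB_\beta^{m, m}$. The calculation comes down to a small amount of normalization bookkeeping.

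First I identify $a$ and $b$ with $m \times m$ diagonal matrices. For $U \in U(m)$, let $\mu^l$ denote the eigenvalues of the top-left $l \times l$ principal minor of $U a U^*$, so that $(U a U^*)_{ll} = |\mu^l| - |\mu^{l - 1}|$ and therefore
\[
\Tr(U a U^* b) = \sum_{l = 1}^m b_l\bigl(|\mu^l| - |\mu^{l - 1}|\bigr),
\]
depending on $U$ only through $(\mu^1, \ldots, \mu^{m - 1})$. The key analytic input, which is the content of the result from \cite{Ner} that I cite (originally due to Baryshnikov in this form), is that when $U$ is Haar-distributed on $U(m)$, the induced joint distribution of $(\mu^1, \ldots, \mu^{m - 1})$ is the uniform measure on the Gelfand--Tsetlin polytope $\GT_a$ with density $\prod_{l = 1}^{m - 1} l! / \Delta(a)$. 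Substituting yields
\[
h^2_a(b) = \frac{\prod_{l = 1}^{m - 1} l!}{\Delta(a)} \int_{\mu \in \GT_a} e^{-\sum_{l = 1}^m b_l(|\mu^l| - |\mu^{l - 1}|)} \prod_{l = 1}^{m - 1} \prod_{i = 1}^l d\mu^l_i.
\]

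On the other side, I specialize the definition of $\BB_2^{m, m}(a, -b)$ via (\ref{eq:int-rat}) and (\ref{eq:mvb-int}) to $\theta = 1$: every factor with exponent $\theta - 1$ trivializes, the prefactor $\prod_i \lambda_i^{\theta(m - n)}$ equals $1$ because $n = m$, and the $\Gamma(\theta)$ normalization in (\ref{eq:int-rat}) is $1$. This gives
\[
\BB_2^{m, m}(a, -b) = \frac{\prod_{l = 1}^m \Gamma(l)}{\Delta(a)} \int_{\mu \in \GT_a} e^{-\sum_{l = 1}^m b_l(|\mu^l| - |\mu^{l - 1}|)} \prod_{l = 1}^{m - 1} \prod_{i = 1}^l d\mu^l_i.
\]
Since $\prod_{l = 1}^m \Gamma(l) = \prod_{l = 0}^{m - 1} l! = \prod_{l = 1}^{m - 1} l!$, the two expressions agree, completing the proof. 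The only substantive step is the Neretin/Baryshnikov projection lemma for Haar measure on $U(m)$; everything else is collecting normalization constants and checking that the $\theta = 1$ integrand collapses exactly to the exponential term produced by the HCIZ trace.
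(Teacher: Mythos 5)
Your proposal is correct and follows essentially the same route as the paper: both rewrite $h^2_a(b)$ as an integral over the pushforward of Haar (orbital) measure to the Gelfand--Tsetlin polytope $\GT_a$ via \cite[Proposition 1.1]{Ner} at $\theta = 1$, and match this against the $\theta = 1$ collapse of the integral formula for $\BB_2^{m,m}$. The only difference is cosmetic: you track the normalization constants $\prod_{l=1}^{m-1} l!/\Delta(a)$ explicitly, whereas the paper fixes the $a$-dependent constant by evaluating both sides at $b = 0$.
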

\begin{proof}
The HCIZ integral may be rephrased as an integral over the orbital measure on the conjugacy class of the matrix with $a_1, \ldots, a_m$ as its diagonal entries.  Restricting the $\theta = 1$ case of \cite[Proposition 1.1]{Ner} to this conjugacy class, we see that the orbital measure pushes forward to a multiple of the Lebesgue measure on $\GT_a$.  Matching the resulting integrals shows that the two sides agree up to a multiplicative factor in $a$.  Noting that $h^2_a(0) = 1 = \BB_2^{m, m}(a, 0)$ completes the proof.
\end{proof}

\begin{lemma} \label{lem:hciz-real}
If $a_1 > \cdots > a_m$ are all distinct, then we have
\[
h_a^1(b) = \BB_1^{m, m}(a, -b/2).
\]
\end{lemma}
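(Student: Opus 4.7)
The plan is to mirror the proof of Lemma~\ref{lem:hciz-complex} with the parameter $\theta$ specialized to $1/2$ rather than $1$. The real HCIZ integral $h_a^1(b)$ is an integral against Haar measure on $O(N)$ of a function that depends only on the conjugate $VaV^T$, so it can be rephrased as an integral against the orbital measure supported on the $O(N)$-conjugacy class of $\diag(a_1, \ldots, a_m, 0, \ldots, 0)$. The key input is \cite[Proposition 1.1]{Ner}, which at general $\theta$ describes the pushforward of such an orbital measure under the map sending a matrix to its interlacing pattern of minor eigenvalues; specialized to $\theta = 1/2$ (the real case), it expresses this pushforward as an explicit density on the Gelfand-Tsetlin polytope $\GT_a$ involving the powers $\theta - 1 = -1/2$ of the Vandermonde-type factors $\Delta(\mu^l, \mu^{l+1})/\Delta(\mu^l)\Delta(\mu^{l+1})$.

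Having invoked Neretin's result, I would next apply the linear functional $X \mapsto e^{-\Tr(Xb)/2}$ to both sides, which collapses along the Gelfand-Tsetlin pattern to the exponential $e^{-\sum_{l=1}^m (b_l/2)(|\mu^l| - |\mu^{l-1}|)}$ on the right-hand side. Comparing the resulting expression with the integral formula~\eqref{eq:int-rat} for $\phi^{m,m}_{1/2}(a, -b/2)$ and the definition~\eqref{eq:mvb-int} of $\BB_1^{m,m}(a, -b/2)$, one sees that the $-b/2$ specialization absorbs the factor of $1/2$ in the exponent of the real HCIZ integral and matches the Fourier-type factor exactly; the remaining Gelfand-Tsetlin densities match term-by-term at $\theta = 1/2$. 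This shows $h_a^1(b) = c(a) \BB_1^{m,m}(a, -b/2)$ for some $b$-independent factor $c(a)$.

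To pin down $c(a)$, I would specialize to $b = 0$: on the left, $h_a^1(0) = 1$ since Haar measure has total mass $1$, and on the right, $\BB_1^{m,m}(a, 0) = 1$ by Proposition~\ref{prop:mvb-prop}. Hence $c(a) = 1$ and the identity follows.

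The main obstacle will be aligning the normalization constant from Neretin's pushforward formula with the specific normalization used in the definition of $\BB_\beta^{n,m}$ here; Neretin's formula has its own prefactor of Gamma values and Vandermonde powers, and matching these exactly at $\theta = 1/2$ against the $\Gamma(m\theta)\cdots\Gamma((m-n+1)\theta)/\Gamma(\theta)^n$ and $\Delta(a)^\theta \prod_i a_i^{\theta(m-n)}$ factors appearing in~\eqref{eq:mvb-int} is the only nontrivial bookkeeping step. Fortunately, the normalization ambiguity is entirely a function of $a$, so the final evaluation at $b=0$ bypasses the need to track the prefactor explicitly — the same shortcut used in the complex case.
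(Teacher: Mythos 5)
Your proposal is correct and follows essentially the same route as the paper: both rephrase $h^1_a(b)$ as an integral against the orbital measure on the $O(N)$-conjugacy class of $\diag(a)$, apply Neretin's Proposition 1.1 at $\theta = 1/2$ to push forward to the Gelfand--Tsetlin polytope, match the result against the integral formula \eqref{eq:int-rat} up to an $a$-dependent constant, and fix that constant by evaluating both sides at $b = 0$. The only difference is that you spell out the bookkeeping (the non-Lebesgue density at $\theta = 1/2$ and the absorption of the $1/2$ into the specialization $-b/2$) that the paper leaves implicit.
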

\begin{proof}
The proof proceeds along the same lines as that of Lemma \ref{lem:hciz-complex} with \cite[Proposition 1.1]{Ner} applied in the case $\theta = 1/2$.
\end{proof}

\section{The multivariate Bessel ensemble and the generalized $\beta$-Wishart ensemble} \label{sec:mvb-wish}

We define the multivariate Bessel ensemble and generalized $\beta$-Wishart ensemble.  At $\beta = 1, 2$, we prove that the generalized $\beta$-Wishart ensemble gives a matrix model for the multivariate Bessel process. The $\beta = 2$ case was considered in \cite{BP} and \cite{DW}, where the analysis hinged on the HCIZ integral in two ways.  The paper \cite{DW} uses results of \cite{Def} which reduce to an evaluation of this integral, and our reduction to the Laplace transform on the Laguerre ensemble also uses it.

\subsection{Definition of the multivariate Bessel ensemble}

The \textit{multivariate Bessel ensemble} with parameters $\{\pi_i\}_{1 \leq i \leq n}$ and $\{\hpi_j\}_{j \geq 1}$ is the process on $\{\mu^m_i\}_{1 \leq i \leq \min\{n, m\}}$ with joint distribution supported on 
\[
\mu^1 \prec \cdots \prec \mu^m, \qquad \mu^l_i \in [0, \infty)
\]
with $\mu^l$ of length $\min\{l, n\}$ and density given by
\begin{multline} \label{eq:mvb-density}
p^{m, n}_{\pi, \hpi}(\mu^1 \prec \cdots \prec \mu^m) := \wBB_{\theta, m, n} \prod_{j = 1}^m \prod_{i = 1}^n (\hpi_j + \pi_i)^\theta \Delta(\mu^m)^{\theta} e^{-\theta\sum_{l = 1}^m \hpi_l (|\mu^l| - |\mu^{l-1}|)}  \\ \prod_{i = 1}^{\min\{m, n\}} (\mu^m_i)^{\theta (n - \min\{n, m\})}\prod_{i = 1}^{\min\{n, m\}}  \frac{(\mu^{\min\{n, m\}}_i)^{\theta - 1}}{(\mu^m_i)^{\theta - 1}}\prod_{l = 1}^{m - 1} \frac{\Delta(\mu^l, \mu^{l+1})^{\theta - 1}}{\Delta(\mu^l)^{\theta - 1}\Delta(\mu^{l+1})^{\theta - 1}} \wB_\beta^{\min\{n, m\}, n}(\mu^m, - \theta\pi)
\end{multline}
for a normalization constant $\wBB_{\theta, m, n}$.

\begin{prop} \label{prop:mvb-valid}
The multivariate Bessel ensemble with parameters $(\pi, \hpi)$ is a valid probability measure with level $m$ marginal density given by
\begin{multline} \label{eq:mvb-marginal-density}
p^{m, n}_{\pi, \hpi}(\mu^m) := C^{\text{MVB}}_{\theta, m, n} \prod_{j = 1}^m \prod_{i = 1}^n (\hpi_j + \pi_i)^\theta\\ \Delta(\mu^m)^{2\theta} \prod_{i = 1}^{\min\{m, n\}} (\mu^m_i)^{\theta (\max\{n, m\} - \min\{n, m\})} \BB_\beta^{\min\{n, m\}, m}(\mu^m, -\theta \hpi) \wB_\beta^{\min\{n, m\}, n}(\mu^m, - \theta \pi)
\end{multline}
for a normalization constant $C^{\text{MVB}}_{\theta, m, n}$.
\end{prop}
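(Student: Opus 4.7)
Set $k := \min\{n,m\}$ throughout. The plan is to first establish the marginal formula \eqref{eq:mvb-marginal-density} by integrating out $\mu^1, \ldots, \mu^{m-1}$ from the joint density \eqref{eq:mvb-density}, and then to verify that the marginal integrates to $1$ over $\mu^m$ by invoking the Cauchy identity of Proposition \ref{prop:mvb-cauchy}. These two steps will simultaneously pin down both normalization constants $\wBB_{\theta,m,n}$ and $C^{\text{MVB}}_{\theta,m,n}$.

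For the first step I would hold $\mu^m$ fixed and pull out of the inner integration the factors depending only on $\mu^m$ --- namely $\Delta(\mu^m)^\theta$, $\prod_i (\mu^m_i)^{\theta(n-k)}$, and $\wB^{k,n}_\beta(\mu^m,-\theta\pi)$ --- together with the $\mu^m$-independent prefactors. The remaining integrand, consisting of $e^{-\theta\sum_l \hpi_l(|\mu^l|-|\mu^{l-1}|)}$, the level-ratio product $\prod_{l=1}^{m-1} \Delta(\mu^l,\mu^{l+1})^{\theta-1}/(\Delta(\mu^l)^{\theta-1}\Delta(\mu^{l+1})^{\theta-1})$, and the boundary factor $\prod_i (\mu^k_i)^{\theta-1}/(\mu^m_i)^{\theta-1}$, matches exactly the integrand in the integral formula \eqref{eq:int-rat} for $\phi^{k,m}_\theta(\mu^m,-\theta\hpi)$, once one observes that the length $\min\{l,n\}$ of $\mu^l$ coincides with the rectangular Gelfand-Tsetlin pattern $\GT_{\mu^m}$ (of dimension $k(k-1)/2 + k(m-k)$) implicit in $\BB^{k,m}_\beta$. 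Integration therefore produces $\Gamma(\theta)^{k(k-1)/2 + k(m-k)} \phi^{k,m}_\theta(\mu^m,-\theta\hpi)$, and the definition \eqref{eq:mvb-int} rewrites this as a Gamma-function multiple of $\Delta(\mu^m)^\theta \prod_i (\mu^m_i)^{\theta(m-k)} \BB^{k,m}_\beta(\mu^m,-\theta\hpi)$. Combining with the pulled-out factors and simplifying $(n-k)+(m-k) = \max\{n,m\} - \min\{n,m\}$ yields exactly \eqref{eq:mvb-marginal-density}.

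For the second step I would integrate the resulting marginal over $\mu^m_1 \geq \cdots \geq \mu^m_k \geq 0$. A direct application of Proposition \ref{prop:mvb-cauchy} requires the first and second indices of $\wB$ to coincide, so I would first note that the defining relation $\wB^{a,b}_\beta(\lambda,s) = \Gamma(\theta)^{-a}\prod_i \lambda_i^{\theta-1} \BB^{a,b}_\beta(\lambda,s)$ implies the symmetry
\[
\BB^{k,m}_\beta(\mu^m,-\theta\hpi)\, \wB^{k,n}_\beta(\mu^m,-\theta\pi) \;=\; \BB^{k,n}_\beta(\mu^m,-\theta\pi)\, \wB^{k,m}_\beta(\mu^m,-\theta\hpi)
\]
as functions of $\mu^m$. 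Since $k \leq \min\{m,n\}$, Proposition \ref{prop:mvb-cauchy} now applies in whichever of the two orderings places $\max\{m,n\}$ in the second index of $\BB$, yielding a Gamma-function constant times $\prod_{i,j}(\theta\hpi_i+\theta\pi_j)^{-\theta}$, which cancels the $\prod_{i,j}(\hpi_j+\pi_i)^\theta$ factor in the density and forces the values of $\wBB_{\theta,m,n}$ and $C^{\text{MVB}}_{\theta,m,n}$.

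The main obstacle is purely bookkeeping: accurately tracking powers of $\Gamma(\theta)$, the Gamma-function chain $\Gamma(m\theta)\cdots\Gamma((m-k+1)\theta)$ coming from \eqref{eq:mvb-int}, and the rescaling factor $\theta^{-\theta mn}$ introduced when substituting $s = \theta\hpi$ and $r = \theta\pi$ into the Cauchy integral. The only conceptual steps are the recognition of the rectangular Gelfand-Tsetlin pattern structure hidden inside \eqref{eq:mvb-density} and the $\wB$--$\BB$ swap identity displayed above, which together reduce both claims --- the marginal formula and the normalization --- to directly quoting Proposition \ref{prop:mvb-cauchy}.
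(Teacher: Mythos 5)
Your proposal is correct and follows essentially the same route as the paper: integrate out $\mu^1,\ldots,\mu^{m-1}$ by recognizing the integrand of $\phi^{k,m}_\theta(\mu^m,-\theta\hpi)$ inside \eqref{eq:mvb-density}, then invoke the Cauchy identity of Proposition \ref{prop:mvb-cauchy} for finiteness and normalization. Your additional observations --- the explicit $\GT_{\mu^m}$ bookkeeping and the $\wB$--$\BB$ swap needed when $m<n$ so that the Cauchy identity applies with $\max\{m,n\}$ in the second slot --- are correct details that the paper's terse proof leaves implicit.
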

\begin{proof}
First, the density (\ref{eq:mvb-density}) is non-negative by its definition.  By the defining integral formula (\ref{eq:mvb-int}) for the multivariate Bessel function, integration over $\mu^1 \prec \cdots \prec \mu^{m - 1}$ in $\GT_{\mu^m}$ yields the expression (\ref{eq:mvb-marginal-density}) with a non-zero constant of proportionality.  The Cauchy identity (\ref{prop:mvb-cauchy}) for multivariate Bessel functions then ensures that (\ref{eq:mvb-marginal-density}) and therefore (\ref{eq:mvb-density}) has finite total mass and may therefore be normalized to a valid probability measure with the claimed marginal density.
\end{proof}

\subsection{Definition of the generalized $\beta$-Wishart ensemble}

Let $\beta = 1, 2$, and fix two sets of non-negative real parameters $\{\pi_i\}_{1 \leq i \leq n}$ and $\{\hpi_i\}_{i \geq 1}$.  Let $(A_{ij})$ be an infinite matrix of zero-mean Gaussian random variables with variance $(\pi_j + \hpi_i)^{-1}$ which are real if $\beta = 1$ and complex if $\beta = 2$.  Let $A_m := (A_{ij})_{1 \leq i \leq m, 1 \leq j \leq n}$ denote its top $m$ rows.  Following \cite{BP} in the case $\beta = 2$, we define the \textit{generalized $\beta$-Wishart ensemble} with parameters $(\pi, \hpi)$ and level $m$ to be the random matrix $M_m = A_m^* A_m$.  We call the joint distribution of $\{M_m\}_{1 \leq m}$ the \textit{multilevel generalized $\beta$-Wishart ensemble} with parameters $(\pi, \hpi)$.

\begin{remark}
Setting $\pi_j = 1$ and $\hpi_i = 0$ recovers the standard Wishart process.
\end{remark}

Denote the non-zero eigenvalues of $M_m$ by $\{\mu^m_i\}_{1 \leq i \leq \min\{m, n\}}$.  For $\beta = 2$, it was shown by Dieker-Warren in \cite{DW} that the multivariate Bessel ensemble admits a matrix model in terms of a multilevel generalized $\beta$-Wishart ensemble.

\begin{theorem}[{\cite[Theorem 3.1]{DW}}] \label{thm:gen-wish-dens}
For $\beta = 2$, the eigenvalues $\{\mu^m_i\}$ of the multilevel generalized $\beta$-Wishart ensemble with parameters $(\pi, \hpi)$ form a Markov process with transition kernel
\begin{multline*}
Q^{\pi, \hpi}_{m - 1, m}(\mu^{m-1}, d\mu^{m}) = \prod_{i = 1}^n(\pi_i + \hpi_m)\\ \frac{\prod_{i = 1}^{\min\{m, n\}} (\mu^m_i)^{n - \min\{m, n\}}}{\prod_{i = 1}^{\min\{m - 1, n\}} (\mu^{m - 1}_i)^{n - \min\{m - 1, n\}}} \frac{h^2_\pi(\mu^m) \Delta(\mu^m)}{h_\pi^2(\mu^{m-1}) \Delta(\mu^{m-1})}e^{- \hpi_m(|\mu^m| - |\mu^{m-1}|)}  1_{\mu^{m - 1} \prec \mu^m} d\mu^m,
\end{multline*}
where we recall that $h^2_\pi(\mu)$ denotes the complex HCIZ integral.
\end{theorem}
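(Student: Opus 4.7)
The plan is to compute the joint density of the multilevel eigenvalue process $(\mu^1,\dots,\mu^m)$ directly from the complex Gaussian matrix model at $\beta = 2$, integrate out angular degrees of freedom using HCIZ integrals, and read off the transition kernel. The $mn$ complex entries $A_{ij}$ are independent complex Gaussians of variance $(\pi_j + \hpi_i)^{-1}$, so their joint density with respect to Lebesgue measure is proportional to
\[
\prod_{i = 1}^m \prod_{j = 1}^n (\pi_j + \hpi_i)\, \exp\!\Bigl(-\Tr(\pi M_m) - \sum_{l = 1}^m \hpi_l \|a_l\|^2\Bigr),
\]
where $a_l$ denotes the $l$-th row and $\pi = \diag(\pi_1,\dots,\pi_n)$. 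The telescoping identity $\|a_l\|^2 = |\mu^l| - |\mu^{l-1}|$ immediately produces the $e^{-\hpi_m(|\mu^m|-|\mu^{m-1}|)}$ factor and the constant $\prod_{j=1}^n (\pi_j + \hpi_m)$ appearing in the claimed kernel.

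Next, I would analyze the transition from level $m-1$ to $m$ via the rank-one update $M_m = M_{m-1} + a_m^* a_m$. Conditioning on $A_{m-1}$ and writing $M_{m-1} = U_{m-1}\diag(\mu^{m-1}, 0) U_{m-1}^*$, set $\tilde w := U_{m-1}^* a_m^*$; then the eigenvalues $\mu^m$ of $M_m$ are determined by $\mu^{m-1}$ and $\tilde w$ through the secular equation, Cauchy interlacing automatically yields $\mu^{m-1} \prec \mu^m$, and the $|\tilde w_i|^2$ solve to explicit symmetric rational functions of $(\mu^{m-1}, \mu^m)$. Changing variables from $\tilde w \in \CC^n$ to $(\mu^m, \arg \tilde w_1, \dots, \arg \tilde w_n)$, combined with the standard SVD Jacobian for $A_m \mapsto (M_m, \text{unitary data})$, produces the factor $\Delta(\mu^m)/\Delta(\mu^{m-1})$ and the rank-deficiency powers $\prod_i (\mu^m_i)^{n-\min\{m,n\}} / \prod_i (\mu^{m-1}_i)^{n-\min\{m-1,n\}}$ appearing in the kernel.

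The phases $\arg \tilde w_i$ are uniform on the torus by isotropy of the complex Gaussian and integrate out trivially. The remaining task is to integrate against the weight $\exp(-\Tr(\pi M_m))$, which is non-trivial because $\pi$ breaks unitary invariance. Reorganizing the iterated computation so that each level $l$ contributes a full Haar integral over $U_l \in U(n)$ against $\exp(-\Tr(\pi U_l \diag(\mu^l,0) U_l^*))$ produces the HCIZ integral $h^2_\pi(\mu^l)$ at each level, and telescoping across levels leaves precisely the ratio $h^2_\pi(\mu^m)/h^2_\pi(\mu^{m-1})$ in the $(m-1)\to m$ transition.

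The main obstacle is that Markovianity of the eigenvalue process is not obvious a priori from the matrix model: the eigenvalues $\mu^m$ could in principle depend on the eigenvectors $U_{m-1}$ of $M_{m-1}$, not only on $\mu^{m-1}$. The delicate step is to show that after integrating out $U_{m-1}$ against the $\pi$-twisted Gaussian weight --- which fails to be unitarily invariant because of the anisotropy of $\pi$ --- the residual dependence on $\mu^{m-1}$ is captured exactly by $h^2_\pi(\mu^{m-1})$ in the denominator. Carefully identifying the integration domain for the inner unitary freedom and verifying the clean cancellation leading to a genuine Markov kernel on eigenvalues is the technical heart of the argument.
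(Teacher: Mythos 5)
First, note that the paper does not prove this statement at all: it is imported verbatim from Dieker--Warren \cite[Theorem 3.1]{DW} (with only a remark explaining a normalization discrepancy coming from padding $\mu^m$ with zeros). The proof you should be measured against is therefore the one in \cite{DW}, whose structure is mirrored by this paper's own $\beta=1$ analogue (Theorem \ref{thm:beta1-wish}): one first establishes the result for the null case $\pi=(1,\dots,1)$, $\hpi=(0,\dots,0)$, then writes the generalized ensemble as an absolutely continuous change of measure with Radon--Nikodym derivative $\prod_{i,j}(\pi_i+\hpi_j)\exp\bigl(-\sum_l(\hpi_l-1)(|\mu^l|-|\mu^{l-1}|)-\Tr((\pi-I)M_m)\bigr)$, and finally uses the fact that under the null measure the conditional law of $(M_1,\dots,M_m)$ given all the eigenvalues is invariant under simultaneous conjugation, so that $\EE\bigl[e^{-\Tr(\pi M_m)}\mid\mu^1,\dots,\mu^m\bigr]=h^2_\pi(\mu^m)$ is a function of $\mu^m$ alone. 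Markovianity and the ratio $h^2_\pi(\mu^m)/h^2_\pi(\mu^{m-1})$ then come for free, since the density ratio between the two eigenvalue laws depends only on $\mu^m$ and the increments $|\mu^l|-|\mu^{l-1}|$.

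Your direct-computation route is viable in principle, and the first two steps (the factorization of the Gaussian density via $\|a_l\|^2=|\mu^l|-|\mu^{l-1}|$, and the rank-one secular-equation analysis giving $\Delta(\mu^m)/\Delta(\mu^{m-1})$ and the interlacing) are fine. But the proposal has a genuine gap exactly where you flag ``the technical heart'': you do not actually carry out the step showing that integrating out the eigenframe of $M_{m-1}$ against the $\pi$-twisted weight leaves a kernel depending on $\mu^{m-1}$ only through $h^2_\pi(\mu^{m-1})$. Moreover, the mechanism you describe for producing it --- ``each level $l$ contributes a full Haar integral \ldots\ $h^2_\pi(\mu^l)$ at each level, and telescoping across levels leaves the ratio'' --- is not correct as stated: since $\Tr(\pi A_m^*A_m)=\Tr(\pi M_m)$, the anisotropic weight couples only to the eigenvectors of the \emph{top} matrix, so the joint density of $(\mu^1,\dots,\mu^m)$ contains the single factor $h^2_\pi(\mu^m)$ (this is exactly display (\ref{eq:markov-dens}) in the paper), and $h^2_\pi(\mu^{m-1})$ appears in the kernel only as the corresponding factor in the $(m-1)$-level marginal, not as a leftover from a per-level telescoping of Haar integrals. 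Establishing that single-$h^2_\pi(\mu^m)$ joint density is precisely what requires the conjugation-invariance argument above; once you have it, Markovianity is immediate rather than delicate. I would recommend replacing the heuristic third and fourth paragraphs with the change-of-measure argument, which closes the gap cleanly.
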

\begin{remark}
Our statement of Theorem \ref{thm:gen-wish-dens} differs from that of \cite[Theorem 3.1]{DW} by a factor of 
\[
\frac{\prod_{i = 1}^{\min\{m, n\}} (\mu^m_i)^{n - \min\{m, n\}}}{\prod_{i = 1}^{\min\{m - 1, n\}} (\mu^{m - 1}_i)^{n - \min\{m - 1, n\}}}
\]
because each $\mu^m$ is padded with $0$'s to be length $n$ in \cite{DW}.
\end{remark}

\begin{corr} \label{corr:dw-corr1}
For $\beta = 2$, the eigenvalues $\mu^1 \prec \cdots \prec \mu^m$ of the multilevel generalized $\beta$-Wishart ensemble with parameters $(\pi, \hpi)$ have the law of the multivariate Bessel ensemble with parameters $(\pi, \hpi)$.  
\end{corr}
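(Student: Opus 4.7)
The plan is to multiply together the Markov transition kernels from Theorem \ref{thm:gen-wish-dens} to obtain the joint density of $(\mu^1, \ldots, \mu^m)$, exploit telescoping, and then match the answer against the $\theta = 1$ specialization of the multivariate Bessel ensemble density \eqref{eq:mvb-density}.

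First I would form the product
\[
\prod_{l = 1}^{m} Q^{\pi,\hpi}_{l - 1, l}(\mu^{l - 1}, d\mu^l),
\]
using the convention that $\mu^0 = \emptyset$ makes $h^2_\pi(\mu^0) = 1$, $\Delta(\mu^0) = 1$, and the relevant empty products equal $1$. Three families of ratios telescope cleanly: the factors $h^2_\pi(\mu^l)/h^2_\pi(\mu^{l-1})$ collapse to $h^2_\pi(\mu^m)$; the factors $\Delta(\mu^l)/\Delta(\mu^{l-1})$ collapse to $\Delta(\mu^m)$; and the factors $\prod_i (\mu^l_i)^{n - \min\{l, n\}} / \prod_i (\mu^{l-1}_i)^{n - \min\{l-1, n\}}$ collapse to $\prod_i (\mu^m_i)^{n - \min\{m, n\}}$. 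Only the exponential $\prod_l e^{-\hpi_l(|\mu^l| - |\mu^{l-1}|)}$ and the interlacing indicators $\prod_l 1_{\mu^{l-1} \prec \mu^l}$ survive as honest products over $l$.

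Next, I would use Lemma \ref{lem:hciz-complex}, extended by continuity to permit padding with zeros, together with the symmetry $h^2_a(b) = h^2_b(a)$ of the HCIZ integral, to rewrite $h^2_\pi(\mu^m) = \BB^{n,n}_2(\mu^m_{\text{pad}}, -\pi)$, where $\mu^m_{\text{pad}}$ denotes $\mu^m$ padded with $n - \min\{m,n\}$ zeros. At $\theta = 1$ every $(\theta - 1)$-exponent factor in \eqref{eq:mvb-density} collapses to $1$, so the surviving multivariate Bessel factor is $\BB^{\min\{n, m\}, n}_2(\mu^m, -\pi)$. For $m \geq n$ no padding is needed and the two factors coincide immediately; for $m < n$ the identity $\BB^{n,n}_2(\mu^m_{\text{pad}}, -\pi) = \BB^{m, n}_2(\mu^m, -\pi)$ follows by inspection of the integral formula \eqref{eq:int-rat}, where zeros in the top row force the Gelfand--Tsetlin integration to degenerate onto the lower-dimensional polytope associated with the nonzero part of $\mu^m$.

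Comparing term by term with \eqref{eq:mvb-density} at $\theta = 1$, the joint density of the Markov chain matches the multivariate Bessel ensemble density up to a multiplicative constant depending only on $\pi$, $\hpi$, and the dimensions. Since both expressions are probability densities — the multivariate Bessel side is normalizable by Proposition \ref{prop:mvb-valid}, which relies on the Cauchy identity of Proposition \ref{prop:mvb-cauchy} — the constants must agree and the two laws coincide. The main obstacle is the $m < n$ case of the HCIZ-to-multivariate-Bessel identification, where one must pass from a padded HCIZ integral to an honestly $(m, n)$-shaped multivariate Bessel function; this is a continuity/degeneracy argument at coincident arguments, but it is the only step that requires care beyond bookkeeping.
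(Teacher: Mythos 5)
Your proposal is correct and follows essentially the same route as the paper: telescope the product of the Dieker--Warren transition kernels to get the joint density, identify $h^2_\pi(\mu^m)$ with $\BB_2^{\min\{m,n\},n}(\mu^m,-\pi)$ via Lemma \ref{lem:hciz-complex} and the symmetry of the multivariate Bessel function, and match against \eqref{eq:mvb-density} at $\theta=1$. The only difference is that you spell out the padding/degeneration step for $m<n$, which the paper passes over silently.
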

\begin{proof}
By Theorem \ref{thm:gen-wish-dens}, the joint law of $\{\mu^l_i\}$ is given by
\begin{equation} \label{eq:markov-dens}
\wW_{1, m, n} \prod_{j = 1}^m \prod_{i = 1}^n (\pi_i + \hpi_j) \prod_{i = 1}^{\min\{m, n\}} (\mu^m_i)^{n - \min\{m, n\}} h^2_\pi(\mu^m) \Delta(\mu^m) e^{-\sum_{j = 1}^m \hpi_j (|\mu^j| - |\mu^{j - 1}|)} 
\end{equation}
for a normalization constant $\wW_{1, m, n}$.  Now, by Lemma \ref{lem:hciz-complex}, we have that 
\[
h^2_\pi(\mu^m) = \BB_2^{n, n}(\pi, -\mu^m) = \BB_2^{\min\{m, n\}, n}(\mu^m, -\pi),
\]
meaning that (\ref{eq:markov-dens}) coincides with (\ref{eq:mvb-density}), as desired.
\end{proof}

\subsection{Matrix model at $\beta = 1$} 

In this section, we state and prove Theorem \ref{thm:beta1-wish} and Corollary \ref{corr:beta1-mb-ml}, which are analogues for $\beta = 1$ of Theorem \ref{thm:gen-wish-dens} and Corollary \ref{corr:dw-corr1}.  Our proof parallels the analysis of \cite{DW} in the $\beta = 2$ setting, but requires the identification of the real HCIZ integral with a multivariate Bessel function given in Section \ref{sec:mvb-hciz}.

\begin{theorem} \label{thm:beta1-wish}
For $\beta = 1$, the eigenvalues $\{\mu^m_i\}_{m \geq 1}$ of the multilevel generalized $\beta$-Wishart ensemble with parameters $(\pi, \hpi)$ form a Markov chain with transition kernel
\begin{multline*}
Q_{m - 1, m}^{\pi, \hpi}(\mu^{m-1}, d\mu^m) = \frac{\prod_{i = 1}^n (\pi_i + \hpi_m)^{\frac{1}{2}}}{\Gamma(m/2) \Gamma(1/2)^m} e^{-\frac{1}{2}\hpi_m(|\mu^m| - |\mu^{m-1}|)}\\
 \frac{\prod_{i = 1}^{\min\{m, n\}} (\mu^m_i)^{\frac{n - \min\{m, n\} - 1_{m \leq n}}{2}}}{\prod_{i = 1}^{\min\{m - 1, n\}} (\mu^{m-1}_i)^{\frac{n - \min\{m - 1, n\} - 1_{m \leq n}}{2}}} \frac{h^1_\pi(\mu^m)}{h^1_\pi(\mu^{m-1})}\Delta(\mu^m) \Delta(\mu^m, \mu^{m - 1})^{-1/2} 1_{\mu^{m-1} \prec \mu^m} d\mu^m,
\end{multline*}
where we recall that $h^1_\pi(\mu)$ denotes the real HCIZ integral.
\end{theorem}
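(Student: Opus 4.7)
The plan is to parallel the strategy of Dieker--Warren in \cite{DW} for the $\beta = 2$ case, replacing every use of the complex HCIZ integral with the real HCIZ integral via Lemma \ref{lem:hciz-real}. The goal is to compute the joint density of the non-zero eigenvalues $\mu^1 \prec \cdots \prec \mu^m$ of $M_1, \ldots, M_m$ as a measure on interlacing sequences, then read off the Markov transition kernel by taking ratios of consecutive levels.

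The starting point is the joint density of the row vectors $a_1, \ldots, a_m$ of $A_m$, which factors as a product of independent Gaussians, with the $i$-th row having density proportional to $\prod_j (\pi_j + \hpi_i)^{1/2}\exp\bigl(-\tfrac12 a_i (D_\pi + \hpi_i I) a_i^T\bigr)$. I rewrite $\sum_i a_i D_\pi a_i^T = \Tr(D_\pi M_m)$ and $\sum_i \hpi_i |a_i|^2 = \sum_i \hpi_i (|\mu^i| - |\mu^{i-1}|)$; the latter identity holds because $M_i - M_{i-1} = a_i^T a_i$ is rank one with trace $|a_i|^2 = |\mu^i| - |\mu^{i-1}|$. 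This already isolates the exponential factors $e^{-\tfrac12 \hpi_m(|\mu^m|-|\mu^{m-1}|)}$ of the claimed kernel and produces, at the top level, a single symmetric-matrix weight $\exp(-\tfrac12 \Tr(D_\pi M_m))$.

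To pass to eigenvalue coordinates, I proceed inductively from level $1$ to level $m$. Conditioned on $M_{m-1}$ with eigenvalues $\mu^{m-1}$ and eigenvectors $v_1, \ldots, v_n$, the eigenvalues of $M_m = M_{m-1} + a_m^T a_m$ satisfy the secular equation and interlace with $\mu^{m-1}$. Changing variables from the projections $c_i = a_m^T v_i$ of $a_m$ to $\mu^m$ contributes the factor $\Delta(\mu^m) \Delta(\mu^m, \mu^{m-1})^{-1/2}$, where the exponent $-\tfrac12$ (rather than $-1$ as for $\beta = 2$) is the signature of the real Jacobian for a rank-one update of a symmetric matrix. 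When $m \leq n$, the component of $a_m$ orthogonal to the span of the $v_i$'s contributes an additional chi-type factor whose power accounts for the indicator $1_{m \leq n}$ in the eigenvalue exponents. Finally, to obtain a Markov chain purely in the eigenvalues, I average out the eigenvector matrix of $M_m$ against the weight $\exp\bigl(-\tfrac12 \Tr(D_\pi M_m)\bigr)$; by Lemma \ref{lem:hciz-real} this averaging produces exactly $h^1_\pi(\mu^m)$, and the cancellation of the analogous factor at level $m-1$ gives the ratio $h^1_\pi(\mu^m)/h^1_\pi(\mu^{m-1})$ in the kernel.

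The main obstacle will be the careful bookkeeping of Jacobians across the two regimes $m \leq n$ and $m > n$, where $M_m$ transitions from rank $m$ to full rank $n$; the eigenvalue exponents and the interlacing structure change accordingly, and these differences are precisely what the indicator $1_{m \leq n}$ encodes in the theorem. In addition, the prefactor $\Gamma(m/2)^{-1}\Gamma(1/2)^{-m}$ arises from a combination of Gaussian normalization constants for the components of $a_m$ together with $O(n)$-orbit volumes expressible via ratios of sphere surface areas $\Vol(S^{k-1}) = 2\pi^{k/2}/\Gamma(k/2)$, and these constants must be tracked exactly. Once this bookkeeping is done, the joint density factors as a product over consecutive levels, yielding both the Markov property and the explicit transition kernel stated in the theorem.
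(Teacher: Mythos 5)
Your plan is sound and shares the two ingredients that actually make the $\beta=1$ case work -- splitting the Gaussian weight of the rows into the telescoping $\hpi$-part $\sum_l \hpi_l(|\mu^l|-|\mu^{l-1}|)$ and the top-level weight $e^{-\frac12\Tr(D_\pi M_m)}$, and then converting the latter into $h^1_\pi(\mu^m)$ via Lemma \ref{lem:hciz-real} -- but it routes the remaining work differently from the paper. The paper does not recompute any rank-one-update Jacobians: it takes the null case $\pi=(1,\ldots,1)$, $\hpi=(0,\ldots,0)$ as known from \cite[Corollary 3]{FR}, proves a Radon--Nikodym derivative $dP^{\pi,\hpi}_M/dP_M$ for each increment $M_m-M_{m-1}$ (the analogue of \cite[Proposition 2.1]{DW}), and multiplies the known null kernel by the ratio of conditional expectations of that derivative; the Markov property is then inherited for free from the null chain. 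You instead propose to rederive the full joint eigenvalue density from the Gaussian entries, including the secular-equation change of variables, the $\Delta(\mu^m)\Delta(\mu^m,\mu^{m-1})^{-1/2}$ Jacobian, the chi-factors in the rank-deficient regime $m\le n$, and all normalizing constants -- i.e.\ your step (iii) amounts to reproving \cite[Corollary 3]{FR}, which is the bulk of the labor and exactly what the change-of-measure framing is designed to avoid; you also then have to verify by hand that the resulting density factors over consecutive levels to get Markovianity, rather than inheriting it. One point you should make explicit in your eigenvector-averaging step: the identity $\EE\bigl[e^{-\frac12\Tr(D_\pi M_m)}\mid \mu^1,\ldots,\mu^m\bigr]=h^1_\pi(\mu^m)$ is not automatic but follows because the null law of $(M_1,\ldots,M_m)$ is invariant under simultaneous conjugation by $O(n)$, so conditionally on the whole eigenvalue trajectory the frame of $M_m$ may be averaged over Haar measure; this is precisely how the paper justifies the appearance of the real HCIZ integral, and without it your ``average out the eigenvector matrix'' step is unsupported. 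With that justification added and the Jacobian bookkeeping either carried out or delegated to \cite{FR}, your argument closes; as written it is a correct but substantially longer path to the same kernel.
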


\begin{remark}
If $\pi_i = 1$ and $\hpi_j = 0$, Theorem \ref{thm:beta1-wish} implies that $Q_{m - 1, m}(\mu^{m - 1}, d\mu^m) := Q_{m - 1, m}^{(1, \ldots, 1), (0, \ldots, 0)}(\mu^{m - 1}, d\mu^m)$ is given by
\begin{multline*}
Q_{m - 1, m}(\mu^{m-1}, d\mu^m) = \frac{ e^{-\frac{1}{2}(|\mu^m| - |\mu^{m-1}|)}}{\Gamma(m/2) \Gamma(1/2)^m} \frac{\prod_{i = 1}^m (\mu^m_i)^{\frac{n - \min\{m, n\} - 1_{m \leq n}}{2}}}{\prod_{i = 1}^{m - 1} (\mu^{m-1}_i)^{\frac{n - \min\{m - 1, n\} - 1_{m \leq n}}{2}}}\\
 \Delta(\mu^m) \Delta(\mu^m, \mu^{m - 1})^{-1/2} 1_{\mu^{m-1} \prec \mu^m} d\mu^m,
\end{multline*}
which agrees with the transition kernel of the ordinary multilevel $\beta$-Wishart ensemble given in \cite[Corollary 3]{FR}.
\end{remark}

\begin{corr} \label{corr:beta1-mb-ml}
For $\beta = 1$, the eigenvalues $\mu^1 \prec \cdots \prec \mu^m$ of the multilevel $\beta$-Wishart ensemble with parameters $(\pi, \hpi)$ have the law of the multivariate Bessel ensemble with parameters $(\pi, \hpi)$.
\end{corr}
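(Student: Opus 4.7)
The plan is to carry out the $\beta = 1$ analogue of Corollary \ref{corr:dw-corr1}: multiply the Markov transition kernels from Theorem \ref{thm:beta1-wish} across levels, use Lemma \ref{lem:hciz-real} to rewrite the real HCIZ factor $h^1_\pi(\mu^m)$ as a multivariate Bessel function, and match the resulting joint density with (\ref{eq:mvb-density}) specialized at $\theta = 1/2$.

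Most factors in the product $\prod_{l = 1}^m Q_{l-1, l}^{\pi, \hpi}(\mu^{l - 1}, d\mu^l)$ telescope or combine cleanly. The normalization prefactors assemble into $\prod_{j = 1}^m \prod_{i = 1}^n (\pi_i + \hpi_j)^{1/2}$, matching the $\prod_{j, i} (\hpi_j + \pi_i)^\theta$ factor of (\ref{eq:mvb-density}) at $\theta = 1/2$. The exponential factors combine into $e^{-\frac{1}{2} \sum_{l = 1}^m \hpi_l (|\mu^l| - |\mu^{l-1}|)}$. The ratios $h^1_\pi(\mu^l)/h^1_\pi(\mu^{l-1})$ collapse telescopically to $h^1_\pi(\mu^m)$, using the convention $h^1_\pi(\emptyset) = 1$. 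The product $\prod_{l = 1}^m \Delta(\mu^l) \Delta(\mu^l, \mu^{l-1})^{-1/2}$ rearranges, using $\Delta(\mu^1) = 1$, into
\[
\Delta(\mu^m)^{1/2} \prod_{l = 1}^{m - 1} \frac{\Delta(\mu^l, \mu^{l + 1})^{-1/2}}{\Delta(\mu^l)^{-1/2} \Delta(\mu^{l+1})^{-1/2}},
\]
which is exactly the Vandermonde structure of (\ref{eq:mvb-density}) at $\theta = 1/2$. The power factors $(\mu^l_i)^{(n - \min\{l, n\} - 1_{l \leq n})/2}$ partially telescope, leaving residual contributions at the top level $l = m$ and, when $m > n$, an additional contribution at level $l = n$ where the indicator $1_{l \leq n}$ drops; these should match the factors $\prod_i (\mu^m_i)^{\theta(n - \min\{n, m\})} (\mu^{\min\{n, m\}}_i)^{\theta - 1}/(\mu^m_i)^{\theta - 1}$ in (\ref{eq:mvb-density}).

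The substantive step is the HCIZ-to-multivariate-Bessel conversion. Lemma \ref{lem:hciz-real} gives $h^1_\pi(\mu^m) = \BB_1^{n, n}(\pi, -\mu^m/2)$, where $\mu^m$ is padded with zeros to length $n$ when $m < n$. Using the symmetry and scaling relations in Proposition \ref{prop:mvb-prop} together with the definition of $\wB_\beta^{n, m}$, this becomes $\wB_1^{\min\{n, m\}, n}(\mu^m, -\pi/2)$ up to an explicit $\prod_i (\mu^m_i)^{-1/2}$ factor that merges with the power bookkeeping above, matching the $\wB_\beta^{\min\{n, m\}, n}(\mu^m, -\theta \pi)$ factor in (\ref{eq:mvb-density}).

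The main obstacle I anticipate is the case $m < n$: here $h^1_\pi(\mu^m)$ is naturally an integral over $O(n)$ with $\mu^m$ padded by $n - m$ zeros, while the dual multivariate Bessel function $\wB_1^{m, n}$ appearing in (\ref{eq:mvb-density}) is intrinsically a function of $m$ variables. The reduction from $\BB_1^{n, n}$ to $\BB_1^{m, n}$ under specialization of $n - m$ arguments to zero must be verified directly from the integral formula (\ref{eq:int-rat}) at $\theta = 1/2$, where some care is needed with the boundary factors $\prod_i (\mu^n_i)^{\theta - 1}/\lambda_i^{\theta - 1}$ that would otherwise become singular.
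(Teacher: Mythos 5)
Your proposal is correct and follows essentially the same route as the paper: multiply the transition kernels from Theorem \ref{thm:beta1-wish} across levels, convert $h^1_\pi(\mu^m)$ into the dual multivariate Bessel function $\wB^{\min\{m,n\},n}_1(\mu^m, -\pi/2)$ via Lemma \ref{lem:hciz-real} and the symmetry/scaling relations, and match the result against (\ref{eq:mvb-density}) at $\theta = 1/2$. The paper asserts the proportionality underlying the $m < n$ padding issue you flag without further comment, so your extra care there is, if anything, more thorough than the published argument.
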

\begin{proof}
By Theorem \ref{thm:beta1-wish}, we find that the joint density of $\{\mu^l_i\}$ is proportional to 
\begin{multline} \label{eq:markov-dens2}
\prod_{i = 1}^n \prod_{j = 1}^m (\pi_i + \hpi_j)^{\frac{1}{2}} e^{-\frac{1}{2}\sum_{j = 1}^m \hpi_j (|\mu^j| - |\mu^{j_1}|)}\\ \prod_{i = 1}^{\min\{m, n\}} (\mu^{\min\{m, n\}}_i)^{\frac{n - \min\{m, n\} - 1}{2}} h^1_\pi(\mu^m) \prod_{j = 1}^m \Delta(\mu^j) \Delta(\mu^j, \mu^{j-1})^{-1/2},
\end{multline}
By Lemma \ref{lem:hciz-real}, we have that $h^1_\pi(\mu^m) = \BB^{n, n}_1(\pi, -\mu^m/2)$ is proportional to
\[
\BB^{\min\{m, n\}, n}_1(\mu^m, -\pi/2) = \Gamma(1/2)^n \prod_{i = 1}^{\min\{m, n\}} (\mu^m_i)^{1/2} \wB^{\min\{m, n\}, n}_1(\mu^m, -\pi/2),
\]
meaning that (\ref{eq:markov-dens2}) agrees with (\ref{eq:mvb-density}), as desired.
\end{proof}

\begin{proof}[Proof of Theorem \ref{thm:beta1-wish}]
First, the result holds for $\pi = (1, \ldots, 1)$ and $\hpi = (0, \ldots, 0)$ by \cite[Corollary 3]{FR}.  Let $P^{\pi, \hpi}$ and $P^{\pi, \hpi}_M$ denote the distributions of $\mu^1, \ldots, \mu^m$ and $M_1, \ldots, M_m$ with parameters $(\pi, \hpi)$, and define $P := P^{(1, \ldots, 1), (0, \ldots, 0)}$ and $P_M := P^{(1, \ldots, 1), (0, \ldots, 0)}_M$.  Let $a_m$ denote the $m^\text{th}$ row of $A$, so that 
\[
M_{m} = M_{m-1} + a_m^T a_m \overset{d} = M_{m - 1} + (\pi + \hpi_m I)^{-1/2} z_m^T z_m(\pi + \hpi_m I)^{-1/2},
\]
where $z_m = a_m (\pi + \hpi_m I)$ is a $1 \times n$ vector of real standard Gaussian random variables.  The key step is the following Lemma \ref{lem:law-cont} parallel to \cite[Proposition 2.1]{DW} giving a change of measure between $P^{\pi, \hpi}$ and $P$.  

\begin{lemma} \label{lem:law-cont}
For $m \geq 1$, the $P^{\pi, \hpi}_M$-law of $M_{m} - M_{m - 1}$ is absolutely continuous with respect to the $P_M$-law of $M_{m} - M_{m - 1}$ with Radon-Nikodyn derivative
\[
\frac{dP^{\pi, \hpi}_M}{dP_M} = \prod_{i = 1}^n (\pi_i + \hpi_m)^{1/2} \exp\Big(-\frac{1}{2}(\hpi_m - 1) \Tr(M_m - M_{m-1}) -\frac{1}{2} \sum_{i = 1}^n \pi_i(M_{m, ii} - M_{m - 1, ii})\Big).
\]
\end{lemma}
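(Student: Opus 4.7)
The plan is to compute the Radon–Nikodym derivative at the level of the row $a_m$ (where $a_m$ denotes the $m$-th row of $A$) and then observe that the ratio descends to the pushforward under $a_m \mapsto a_m^T a_m = M_m - M_{m-1}$. Since under both measures the rows of $A$ are independent, $a_m$ is independent of $M_{m-1}$, and the marginal law of the increment $M_m - M_{m-1}$ is simply the pushforward of the law of $a_m$.

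First, I would write down the densities. Under $P^{\pi, \hpi}$, the entries $a_{m, j}$ are independent real Gaussians with mean $0$ and variance $(\pi_j + \hpi_m)^{-1}$, and under $P$ they are independent standard real Gaussians. The ratio of the Lebesgue densities of $a_m$ is therefore
\[
\prod_{j = 1}^n (\pi_j + \hpi_m)^{1/2} \exp\!\left(-\tfrac{1}{2}\sum_{j = 1}^n (\pi_j + \hpi_m - 1)\, a_{m, j}^2\right).
\]
Splitting $\pi_j + \hpi_m - 1 = (\hpi_m - 1) + \pi_j$ and using $\sum_j a_{m, j}^2 = \Tr(a_m^T a_m) = \Tr(M_m - M_{m-1})$ together with $a_{m, j}^2 = (a_m^T a_m)_{jj} = M_{m, jj} - M_{m-1, jj}$ immediately rewrites this ratio in the form claimed by the lemma.

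The final step is to promote this to a Radon–Nikodym derivative for the pushforward laws. The ratio above depends on $a_m$ only through the functions $\Tr(a_m^T a_m)$ and the diagonal entries $(a_m^T a_m)_{jj}$, each of which is a function of $a_m^T a_m$ alone; in particular it is invariant under $a_m \mapsto -a_m$. A standard change-of-variables argument then shows that, for any bounded measurable $g$ on the cone of rank-at-most-one positive semidefinite matrices,
\[
\EE^{P^{\pi, \hpi}} g(M_m - M_{m-1}) = \EE^{P} \Big[ g(M_m - M_{m-1}) \cdot \tfrac{dP^{\pi, \hpi}_M}{dP_M}(M_m - M_{m-1})\Big],
\]
which identifies the expression above as $\tfrac{dP^{\pi, \hpi}_M}{dP_M}$ evaluated at $M_m - M_{m-1}$.

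There is no real obstacle in this argument; the only subtlety is the descent of the density ratio from $a_m$ (living in $\RR^n$) to the pushforward on the rank-one PSD cone, which is clean precisely because the ratio is manifestly a function of $a_m^T a_m$. The corresponding $\beta = 2$ statement of \cite[Proposition 2.1]{DW} is proved in exactly this way, and the $\beta = 1$ case requires no additional ideas beyond real Gaussian densities replacing complex ones.
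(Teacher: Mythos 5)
Your proposal is correct and follows essentially the same route as the paper: both arguments reduce to comparing the Gaussian densities of the $m$-th row under the two parameter choices and observing that the resulting ratio is a function of $a_m^T a_m = M_m - M_{m-1}$ alone, hence descends to the pushforward laws. If anything, your version is slightly more careful than the paper's two-line proof, which asserts that the diagonal entries have an ``exponential distribution'' (true for complex entries at $\beta = 2$, whereas at $\beta = 1$ the entries $a_{m,j}^2$ are scaled $\chi^2_1$, i.e.\ Gamma$(1/2)$ variables); your direct computation with the row density and the explicit pushforward step avoids this imprecision.
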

\begin{proof}
The difference $M_m - M_{m - 1} = a_m^T a_m$ is determined by its diagonal entries, which have exponential distribution with parameter determined by $\pi$ and $\hpi$.  Comparing densities for $\pi, \hpi$ and $(1, \ldots, 1), (0, \ldots, 0)$ yields the conclusion.
\end{proof}

Applying Lemma \ref{lem:law-cont}, we find that
\begin{align*}
\frac{dP^{\pi, \hpi}}{dP}(\mu^1, \ldots, \mu^m) &= \EE_{P_M}\left[\frac{dP^{\pi, \hpi}_M}{dP_M}(M_1, \ldots, M_m) \mid \mu^1, \ldots, \mu^m\right]\\
&= \prod_{i = 1}^n \prod_{j = 1}^m (\pi_i + \hpi_j)^{1/2} e^{-\frac{1}{2} \sum_{m = 1}^m \hpi_m (|\mu^m| - |\mu^{m-1}|)} e^{\frac{1}{2} |\mu^m|} \EE_{P_M}\left[e^{-\frac{1}{2} \Tr(\pi M_m)} \mid \mu^1, \ldots, \mu^m\right].
\end{align*}
Because the $P_M$-distribution of $(M_1, \ldots, M_m)$ conditional on $(\mu^1, \ldots, \mu^m)$ is invariant under simultaneous conjugation by an element of an orthogonal group, we obtain by applying Lemma \ref{lem:hciz-real} that 
\[
\EE_{P_M}\left[e^{-\frac{1}{2} \Tr(\pi M_m)} \mid \mu^1, \ldots, \mu^m\right] = \EE_{P_M}\left[h^1_\pi(\mu^m) \mid \mu^1, \ldots, \mu^m\right] = h^1_\pi(\mu^m)
\]
and therefore that
\[
\frac{dP^{\pi, \hpi}}{dP}(\mu^1, \ldots, \mu^m) = \prod_{i = 1}^n \prod_{j = 1}^m (\pi_i + \hpi_j)^{1/2} e^{-\frac{1}{2} \sum_{m = 1}^m \hpi_m (|\mu^m| - |\mu^{m-1}|)} e^{\frac{1}{2} |\mu^m|} h^1_\pi(\mu^m).
\]
Since $\mu^1 \prec \mu^2 \prec \cdots$ is Markov under $P$ with transition kernel $Q_{m - 1, m}(\mu^{m - 1}, d\mu^m)$, we conclude that under $P^{\pi, \hpi}$ it is Markov with the desired transition kernel
\begin{align*}
Q^{\pi, \hpi}_{m - 1, m}(\mu^{m - 1}, d\mu^m) &= \frac{\frac{dP^{\pi, \hpi}}{dP}(\mu^1, \ldots, \mu^m)}{\frac{dP^{\pi, \hpi}}{dP}(\mu^1, \ldots, \mu^{m - 1})} Q_{m - 1, m}(\mu^{m - 1}, d\mu^m)\\
&= \prod_{i = 1}^n (\pi_i + \hpi_m)^{1/2} e^{-\frac{1}{2} (\hpi_m - 1)(|\mu^m| - \mu^{m - 1}|)} \frac{h^1_\pi(\mu^m)}{h^1_\pi(\mu^{m - 1})} Q_{m - 1, m}(\mu^{m - 1}, d\mu^m)\\
&= \frac{\prod_{i = 1}^n (\pi_i + \hpi_m)^{1/2}}{\Gamma(m/2)\Gamma(1/2)^m}  e^{-\frac{1}{2} \hpi_m(|\mu^m| - \mu^{m - 1}|)}\frac{\prod_{i = 1}^m (\mu^m_i)^{\frac{n - \min\{m, n\} - 1_{m \leq n}}{2}}}{\prod_{i = 1}^{m - 1} (\mu^{m-1}_i)^{\frac{n - \min\{m - 1, n\} - 1_{m \leq n}}{2}}}  \frac{h^1_\pi(\mu^m)}{h^1_\pi(\mu^{m - 1})}\\
&\phantom{=====}  \Delta(\mu^m) \Delta(\mu^m, \mu^{m - 1})^{-1/2} 1_{\mu^{m-1} \prec \mu^m} d\mu^m. \qedhere
\end{align*}
\end{proof}

\section{The Heckman-Opdam ensemble and the $\beta$-Jacobi ensemble} \label{sec:ho-jac}

In this section, we define the Heckman-Opdam and $\beta$-Jacobi ensembles and prove that at $\beta = 1, 2$, the $\beta$-Jacobi ensemble gives a matrix model for the principally specialized Heckman-Opdam ensemble.

\subsection{Definition of the Heckman-Opdam ensemble}

The Heckman-Opdam ensemble with parameters $\{\pi_i\}_{1 \leq i \leq n}$ and $\{\hpi_i\}_{i \geq 1}$ is the process on $\{\mu^l_i\}_{1 \leq i \leq \min\{n, l\}, 1\leq l \leq m}$ with joint density supported on 
\[
\mu^1 \prec \cdots \prec \mu^m \qquad \mu^l_i \in [0, \infty)
\]
and given by
\begin{multline} \label{eq:ho-density}
\wH_{\theta, m, n} \prod_{i = 1}^m \prod_{j = 1}^n \frac{\Gamma(\theta + \theta \pi_j + \theta\hpi_i)}{\Gamma(\theta \pi_j + \theta \hpi_i)} \Deltat(\mu^m)^{\theta} \prod_{i = 1}^{\min\{m, n\}} (1 - e^{-\mu_i^m})^{\theta(n - \min\{m, n\})}
 \prod_{i = 1}^{\min\{m, n\}} \frac{(1 - e^{-\mu_i^{\min\{m, n\}}})^{\theta - 1}}{(1 - e^{-\mu_i^m})^{\theta - 1}}\\ e^{-\theta\sum_{l = 1}^m \hpi_l (|\mu^l| - |\mu^{l-1}|)} \prod_{l = 1}^{m - 1} \frac{\Delta(e^{-\mu^l}, e^{-\mu^{l+1}})^{\theta - 1}}{\Delta(e^{-\mu^l})^{\theta - 1} \Delta(e^{-\mu^{l + 1}})^{\theta - 1}} e^{(\theta - 1) |\mu^l|} \wF_\beta^{\min\{m, n\}, n}(\mu^m, - \theta\pi)
\end{multline}
for a normalization constant $\wH_{\theta, m, n}$.  

\begin{prop} \label{prop:ho-valid}
The Heckman-Opdam ensemble with parameters $(\pi, \hpi)$ is a valid probability measure with level $m$ marginal density given by
\begin{multline} \label{eq:ho-marginal-density}
C^{\text{HO}}_{\theta, m, n} \prod_{i = 1}^m \prod_{j = 1}^n \frac{\Gamma(\theta + \theta \pi_j + \theta\hpi_i)}{\Gamma(\theta \pi_j + \theta \hpi_i)}
 \prod_{i = 1}^{\min\{m, n\}} (1 - e^{-\mu^m_i})^{\theta (\max\{m, n\} - \min\{m, n\})} \\ \Deltat(\mu^m)^{2\theta} \FF_\beta^{\min\{m, n\}, m}(\mu^m, -\theta\hpi) \wF_\beta^{\min\{m, n\}, n}(\mu^m, - \theta\pi)
\end{multline}
for a normalization constant $C^{\text{HO}}_{\theta, m, n}$.
\end{prop}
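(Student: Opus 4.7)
The plan is to mirror the proof of Proposition \ref{prop:mvb-valid}. Non-negativity of the density (\ref{eq:ho-density}) is immediate from its form: on the interlacing support with all $\mu^l_i \geq 0$, each factor (the Vandermondes $\Deltat(\mu^m)^\theta$, the differences $(1 - e^{-\mu^l_i})$, the ratios built from $\Delta(e^{-\mu^l}, e^{-\mu^{l+1}})$, and the exponentials) is non-negative, and the Heckman-Opdam factor $\wF^{\min\{m, n\}, n}_\beta(\mu^m, -\theta\pi)$ is non-negative by its defining integral representation (\ref{eq:ho-def})--(\ref{eq:ho-conj-def}). The remaining tasks are to integrate out the inner Gelfand-Tsetlin levels using (\ref{eq:ho-def}) in order to extract the claimed marginal (\ref{eq:ho-marginal-density}), and then to apply the Cauchy identity of Proposition \ref{prop:ho-cauchy} to verify finite total mass.

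Setting $n' := \min\{m, n\}$, I would view (\ref{eq:ho-density}) as a function of $(\mu^1, \ldots, \mu^{m-1})$ with $\mu^m$ held fixed and observe that, after stripping the constant together with $\Deltat(\mu^m)^\theta$, $\prod_i (1 - e^{-\mu^m_i})^{\theta(n - n')}$, and $\wF^{n', n}_\beta(\mu^m, -\theta\pi)$ (all of which depend only on $\mu^m$), the remaining integrand is precisely the integrand in the definition (\ref{eq:ho-def}) of $\Phi^{n', m}_\theta(\mu^m, -\theta\hpi)$ under the specialization $s_l = -\theta\hpi_l$. Integrating over $\mu^1 \prec \cdots \prec \mu^{m-1}$ in $\GT_{\mu^m}$ then produces a factor of $\Phi^{n', m}_\theta(\mu^m, -\theta\hpi)$, which by (\ref{eq:ho-def}) equals a $(\theta, m, n)$-dependent multiple of $\Deltat(\mu^m)^\theta \prod_i (1 - e^{-\mu^m_i})^{\theta(m - n')} \FF^{n', m}_\beta(\mu^m, -\theta\hpi)$. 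Combining with the factors already peeled off, the $\Deltat(\mu^m)$ factor becomes $\Deltat(\mu^m)^{2\theta}$ and the exponent of $(1 - e^{-\mu^m_i})$ totals $\theta(n - n') + \theta(m - n') = \theta(\max\{m, n\} - \min\{m, n\})$, recovering the form (\ref{eq:ho-marginal-density}) up to an overall constant $C^{\text{HO}}_{\theta, m, n}/\wH_{\theta, m, n}$ that depends only on $(\theta, m, n)$.

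To confirm normalizability, I would apply Proposition \ref{prop:ho-cauchy} to the marginal (\ref{eq:ho-marginal-density}) with $s = -\theta\hpi$ and $r = -\theta\pi$; the resulting product $\prod_{i, j} \Gamma(-s_i - r_j)/\Gamma(\theta - s_i - r_j) = \prod_{i, j} \Gamma(\theta\pi_j + \theta\hpi_i)/\Gamma(\theta + \theta\pi_j + \theta\hpi_i)$ is exactly the reciprocal of the $\Gamma$-ratio prefactor appearing in (\ref{eq:ho-density}), so the total mass of (\ref{eq:ho-density}) is finite and non-zero, justifying its normalization to a probability measure with the stated marginal. The one delicate point will be that Proposition \ref{prop:ho-cauchy} is stated for $m \geq n$ (i.e.\ $n' = n$, with $\wF$ in the ``square'' position), whereas in the case $m < n$ we have $n' = m$ and the roles of the rectangular and square Heckman-Opdam factors are interchanged; handling this case requires the transposed version of the Cauchy identity, which I would obtain by re-running the scaling argument of Proposition \ref{prop:ho-cauchy} starting from the Macdonald Cauchy identity in Proposition \ref{prop:mac-cauchy} with the two sets of variables swapped. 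This constant-matching across the two size regimes is the most tedious step but presents no conceptual obstacle.
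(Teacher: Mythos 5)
Your proposal is correct and follows essentially the same route as the paper: non-negativity from the definition, integration over the inner Gelfand--Tsetlin levels via the integral formula (\ref{eq:ho-def}) to produce the marginal (\ref{eq:ho-marginal-density}), and the Cauchy identity of Proposition \ref{prop:ho-cauchy} to establish finite total mass. The paper's proof is terser and does not spell out the exponent bookkeeping or the $m < n$ transposition issue that you flag, but the underlying argument is the same.
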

\begin{proof}
First, the density (\ref{eq:ho-density}) is non-negative by its definition.  By the defining integral formula (\ref{eq:ho-def}) for the multivariate Bessel function, integration over $\mu^1 \prec \cdots \prec \mu^{m - 1}$ in $\GT_{\mu^m}$ yields the expression (\ref{eq:ho-marginal-density}) with a non-zero constant of proportionality.  The Cauchy identity (\ref{prop:ho-cauchy}) for multivariate Bessel functions then ensures that (\ref{eq:ho-marginal-density}) and therefore (\ref{eq:ho-density}) has finite total mass and may therefore be normalized to a valid probability measure with the claimed marginal density.
\end{proof}

\subsection{Definition of the $\beta$-Jacobi ensemble} \label{sec:bj-def}

Let $\beta = 1, 2$, and let $X$ and $Y$ be infinite matrices of independent Gaussian random variables with mean $0$ and variance $1$ which are real if $\beta = 1$ and complex if $\beta = 2$.  Fix $m \leq n \leq A$, and let $X^{An}$ and $Y^{mn}$ denote the top left $A \times n$ and $m \times n$ corners of $X$ and $Y$.  The \textit{multilevel $\beta$-Jacobi ensemble} with parameters $(A, n)$ is the joint distribution of the matrices
\begin{equation} \label{eq:mat-model}
J_m := (X^{An})^* X^{An} ((X^{An})^* X^{An} + (Y^{mn})^* Y^{mn})^{-1}.
\end{equation}
Denote the $m$ smallest eigenvalues of $J_m$ in $[0, 1]$ by $\lambda_1^m, \ldots, \lambda_m^m$.  The \textit{$\beta$-Jacobi ensemble} is the joint density of the eigenvalues $\{\lambda^m_i\}_{1 \leq i \leq m, 1\leq m \leq n}$.  The following proposition shows that when conditioned on the value of $X^{An}$, the single level eigenvalues of the $\beta$-Jacobi ensemble have the distribution of the single level eigenvalues of a generalized $\beta$-Wishart ensemble.

\begin{prop} \label{prop:beta-jac-cond}
Let $\{\lambda^m_i\}$ be the multilevel $\beta$-Jacobi ensemble with parameters $(A, n)$.  When conditioned on the eigenvalues $\lambda_{X, i}$ of $(X^{An})^* X^{An}$, the joint distribution of $\tau^m_i := (\lambda^m_i)^{-1} - 1$ is given by the eigenvalues of the multilevel generalized $\beta$-Wishart ensemble with parameters $\pi_i = \lambda_{X, i}$ and $\hpi_j = 0$.
\end{prop}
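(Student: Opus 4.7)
The plan is to exploit the rotational invariance of the Gaussian law of $Y$.  Write $S := (X^{An})^* X^{An}$ and $T_m := (Y^{mn})^* Y^{mn}$, so that $J_m = S (S + T_m)^{-1}$.  Since $A \geq n$, the matrix $S$ is almost surely invertible.  From $J_m v = \lambda v$ we deduce $T_m v = \tau\, S v$ with $\tau = \lambda^{-1} - 1$, so the $\tau^m_i$ are precisely the nonzero eigenvalues of the generalized pencil $(T_m, S)$, equivalently the nonzero eigenvalues of the Hermitian matrix
\[
S^{-1/2} T_m S^{-1/2} = (Y^{mn} S^{-1/2})^* (Y^{mn} S^{-1/2}).
\]

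Next, diagonalize $S = U \diag(\lambda_{X, 1}, \ldots, \lambda_{X, n}) U^*$ with $U$ orthogonal for $\beta = 1$ and unitary for $\beta = 2$, and set
\[
\wtilde{Y} := Y \cdot U \diag(\lambda_{X, 1}^{-1/2}, \ldots, \lambda_{X, n}^{-1/2}),
\]
where $Y$ is restricted to its first $n$ columns and $\wtilde{Y}^{mn}$ denotes the top $m$ rows of $\wtilde{Y}$.  A conjugation by $U$ identifies the nonzero eigenvalues of $(Y^{mn} S^{-1/2})^* (Y^{mn} S^{-1/2})$ with those of $(\wtilde{Y}^{mn})^* \wtilde{Y}^{mn}$, simultaneously for every $m$.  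Because $Y$ is independent of $X$, its entries remain i.i.d.\ standard Gaussian after conditioning on $X^{An}$; by rotational invariance of the real (resp.\ complex) Gaussian law each row of $Y \cdot U$ is again an i.i.d.\ standard Gaussian vector, and column-wise rescaling by $\lambda_{X, j}^{-1/2}$ produces a matrix $\wtilde{Y}$ whose $(i, j)$ entry is a mean-zero Gaussian of variance $\lambda_{X, j}^{-1}$, independent across $(i, j)$.

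Comparing with the definition of the generalized $\beta$-Wishart ensemble from Section \ref{sec:mvb-wish}, the conditional law of $\wtilde{Y}$ given $X^{An}$ coincides with the law of the defining matrix $A$ for the parameters $\pi_j = \lambda_{X, j}$ and $\hpi_i = 0$, since $(\pi_j + \hpi_i)^{-1} = \lambda_{X, j}^{-1}$.  Because the assignment $m \mapsto \wtilde{Y}^{mn}$ preserves the top-$m$-rows filtration of $\wtilde{Y}$, the multilevel joint law of $\{\tau^m_i\}_{1 \leq i \leq m, 1 \leq m \leq n}$ conditional on $X^{An}$ is exactly the joint eigenvalue law of the multilevel generalized $\beta$-Wishart ensemble at those parameters.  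Since the resulting conditional law depends on $X^{An}$ only through $\{\lambda_{X, i}\}$, the same description persists upon conditioning on the eigenvalues alone, which is the claim.  The only point requiring care is the compatibility of the conjugation step with the multilevel filtration; this is automatic because $Y \mapsto Y \cdot U \diag(\lambda_{X, i}^{-1/2})$ acts by right multiplication and therefore does not mix rows.
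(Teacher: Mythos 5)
Your proof is correct and follows essentially the same route as the paper's: identify the $\tau^m_i$ as the nonzero eigenvalues of a Wishart-type matrix built from $Y^{mn}$ and a square root of $(X^{An})^* X^{An}$, then use orthogonal/unitary invariance of the Gaussian law of $Y$ to replace that square root by $\Lambda_X^{-1/2}$ and recognize the generalized $\beta$-Wishart construction with $\pi_i = \lambda_{X,i}$, $\hpi_j = 0$. The only differences are organizational: you conjugate by $S^{-1/2}$ directly where the paper passes through the factorization $X = U_1 \left(\begin{smallmatrix} X_1 \\ 0 \end{smallmatrix}\right)$ and a chain of similarity steps, and your explicit observation that right multiplication by $U \diag(\lambda_{X,i}^{-1/2})$ does not mix rows makes the compatibility with the multilevel filtration slightly more transparent.
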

\begin{proof}
Let $X := X^{An}$ and $Y_m := Y^{mn}$ and condition on the value of $X$.  We may find some $U_1$ in $U(A)$ for $\beta = 2$ and $O(A)$ for $\beta = 1$ so that
\[
X = U_1 \left(\begin{matrix} X_1 \\ 0 \end{matrix}\right),
\]
where $X_1$ is $n \times n$ upper-triangular and a.s. invertible.  For such a $U_1$, we have that 
\[
X^*X = X_1^* U_1^* U_1 X_1 = X_1^* X_1.
\]
The $m$ smallest eigenvalues $\{\lambda^m_i\}$ of $X^*X(X^*X + Y_m^*Y_m)^{-1}$ are the $m$ smallest solutions to 
\[
\det(\lambda (X^*X + Y_m^*Y_m) - X^*X) = 0 \qquad \iff \qquad \det((1/\lambda - 1) + X_1^{-1} X_1^{-*} Y_m^*Y_m) = 0.
\]
Notice now that $\tau^m_i := (\lambda^m_i)^{-1} - 1$ are the $m$ largest eigenvalues of $X_1^{-1}X_1^{-*} Y_m^* Y_m$ and further that $X_1^{-1} X_1^{-*} Y_m^*Y_m$ and $X_1^{-*}Y_m^* Y_m X_1^{-1} = (Y_m X_1^{-1})^*(Y_m X_1^{-1})$ are similar, hence have the same $m$ largest eigenvalues.  Because $A^*A$ and $AA^*$ have the same $m$ largest eigenvalues, this means that $\tau^m_i$ are the $m$ largest eigenvalues of $(Y_m X_1^{-1})(Y_m X_1^{-1})^* = Y_m X_1^{-1}X_1^{-*}Y_m^*$.  

Choose $V$ in $U(n)$ for $\beta = 2$ or $O(n)$ for $\beta = 1$ be a unitary/orthogonal matrix so that $X_1^{-1} X_1^{-*} = V\Lambda_X^{-1}V^*$ for a positive real diagonal matrix $\Lambda_X$.  The distribution of $Y_m$ is unitarily/orthogonally invariant, so we find that 
\[
Y_m X_1^{-1}X_1^{-*}Y_m^* = Y_mV \Lambda_X^{-1} V^*Y_m^* \overset{d} = Y_m \Lambda_X^{-1} Y_m^* = (Y_m \Lambda_X^{-1/2})(Y_m\Lambda_X^{-1/2})^*.
\]
This last product has the same $m$ largest eigenvalues as
\[
A_{X, m} := (Y_m \Lambda_X^{-1/2})^*(Y_m\Lambda_X^{-1/2}) = \Lambda_X^{-1/2} Y_m^* Y_m \Lambda_X^{-1/2},
\]
meaning that conditioned on $\lambda_X$, $\{\tau^m_i\}$ is equal in law to the eigenvalues of $A_{X, m}$.  On the other hand, we have the equality in law
\[
A_{X, m + 1} \overset{d} = A_{X, m} + (\lambda_{X, i}^{-1/2} z_i^*z_j \lambda_{X, j}^{-1/2}),
\]
where $z_i$ is a $1 \times n$ vector of standard unit Gaussian random variables, real for $\beta = 1$ and complex for $\beta = 2$.  Recognizing this as a way of generating the multilevel generalized $\beta$-Wishart distribution with $\pi_i = \lambda_{X, i}$ and $\hpi_j = 0$ yields the desired conclusion.
\end{proof}

\subsection{Matrix model at $\beta = 2$}

In this section we state and prove Theorem \ref{thm:bg-ver} and Corollary \ref{corr:bg-ver-ho} giving an explicit probability density for the multilevel $\beta$-Jacobi ensemble at $\beta = 2$ and identifying it with a principally specialized Heckman-Opdam ensemble.  Together these provide a link between the results of \cite{BG} and random matrix ensembles.

\begin{theorem} \label{thm:bg-ver}
For $\beta = 2$, the eigenvalues $\{\lambda^l_i\}$ of the first $m$ levels of the $\beta$-Jacobi ensemble with parameters $(A, n)$ are supported on interlacing sequences 
\[
\lambda^1 \prec \cdots \prec \lambda^m \qquad \lambda^l_i \in [0, 1]
\]
with joint density given by
\[
\wJ_{1, m, n} \Delta(\lambda^m) \prod_{i = 1}^m (\lambda^m_i)^{A + m - n - 1} (1 - \lambda^m_i)^{n - m}\prod_{l = 1}^{m - 1} \prod_{i = 1}^l (\lambda^l_i)^{-2}
\]
for a normalization constant $\wJ_{1, m, n}$.
\end{theorem}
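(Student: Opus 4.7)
The plan is to condition on the eigenvalues of $(X^{An})^* X^{An}$, identify the resulting law with a multivariate Bessel ensemble via the results of Section \ref{sec:mvb-wish}, evaluate the $\lambda_X$-integral via the Cauchy identity, and finally change variables to the $\lambda$-coordinates.

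By Proposition \ref{prop:beta-jac-cond}, conditional on the eigenvalues $\lambda_X$ of $(X^{An})^* X^{An}$, the rescaled process $\tau^l_i := (\lambda^l_i)^{-1} - 1$ is distributed as the multilevel generalized $\beta$-Wishart ensemble with parameters $\pi = \lambda_X$ and $\hpi = 0$. Corollary \ref{corr:dw-corr1} identifies this at $\beta = 2$ with the multivariate Bessel ensemble with the same parameters. Specializing (\ref{eq:mvb-density}) to $\theta = 1$ and $\hpi = 0$ collapses the $(\theta - 1)$-powered interlacing factors, yielding
\[
p(\tau^1, \ldots, \tau^m \mid \lambda_X) \propto \prod_{i = 1}^n \lambda_{X, i}^m \cdot \Delta(\tau^m) \prod_{i = 1}^m (\tau^m_i)^{n - m} \BB_2^{m, n}(\tau^m, -\lambda_X) \cdot 1_{\tau^1 \prec \cdots \prec \tau^m}.
\]

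To marginalize out $\lambda_X$, integrate against the complex Wishart eigenvalue density, which is proportional to $\Delta(\lambda_X)^2 \prod_i \lambda_{X, i}^{A - n} e^{-\lambda_{X, i}}$. The key $\lambda_X$-integral
\[
\int \BB_2^{m, n}(\tau^m, -\lambda_X) \Delta(\lambda_X)^2 \prod_{i = 1}^n \lambda_{X, i}^{A + m - n} e^{-\sum_i \lambda_{X, i}} d\lambda_X
\]
is evaluated by writing the exponential as a multivariate Bessel value $e^{-\sum_i \lambda_{X, i}} = \BB_2^{n, A + m}(\lambda_X, -(1, \ldots, 1))$ (which follows from Lemma \ref{lem:hciz-complex} combined with the symmetry in Proposition \ref{prop:mvb-prop}), using the identity $\BB_2^{m, n}(\tau^m, -\lambda_X) = \BB_2^{n, n}(\lambda_X, -\tau^m_{\mathrm{pad}})$, where $\tau^m_{\mathrm{pad}}$ denotes $\tau^m$ padded by $n - m$ zeros, and applying the Cauchy identity of Proposition \ref{prop:mvb-cauchy}. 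The zero-padding contributes trivial factors $1 + 0 = 1$, and the integral evaluates to a constant multiple of $\prod_{i = 1}^m (1 + \tau^m_i)^{-(A + m)}$.

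Combining the above, the joint density of $\{\tau^l_i\}$ is proportional to $\Delta(\tau^m) \prod_i (\tau^m_i)^{n - m} (1 + \tau^m_i)^{-(A + m)} \cdot 1_{\tau^1 \prec \cdots \prec \tau^m}$. The change of variables $\tau^l_i \mapsto \lambda^l_i = (1 + \tau^l_i)^{-1}$ carries total Jacobian $\prod_{l = 1}^m \prod_{i = 1}^l (\lambda^l_i)^{-2}$; together with $1 + \tau^m_i = (\lambda^m_i)^{-1}$, $\tau^m_i = (1 - \lambda^m_i)/\lambda^m_i$, and (up to sign) $\Delta(\tau^m) = \Delta(\lambda^m) \prod_i (\lambda^m_i)^{-(m - 1)}$, this absorbs the level-$m$ Jacobian into the power $A + m - n - 1$ of $\lambda^m_i$ and produces the claimed formula. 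The main obstacle is the Cauchy-identity step: one must correctly translate both factors in the integrand into multivariate Bessel functions indexed with matching first parameter (here $\lambda_X$) through the padding-by-zeros symmetry of Proposition \ref{prop:mvb-prop}, after which the identity applies directly. An alternative route is to evaluate the integral through the HCIZ form $h^2_{\lambda_X}(\tau^m) = \int_{V \in U(n)} e^{-\Tr(V \Lambda_X V^* \tau^m_{\mathrm{pad}})} dV$ combined with the complex Wishart matrix Laplace transform $\int_{W > 0} \det(W)^{p - n} e^{-\Tr(WM)} dW \propto \det(M)^{-p}$, which yields the same result since $\det(I + V^* \tau^m_{\mathrm{pad}} V)$ depends only on the eigenvalues of $\tau^m$.
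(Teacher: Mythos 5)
Your proposal is correct and follows the same overall architecture as the paper's proof: condition on the spectrum $\lambda_X$ of $(X^{An})^*X^{An}$, invoke Proposition \ref{prop:beta-jac-cond} together with the Dieker--Warren description (Theorem \ref{thm:gen-wish-dens}) to obtain the conditional law of $\tau^l_i = (\lambda^l_i)^{-1}-1$, integrate out $\lambda_X$ against the complex Wishart eigenvalue density, and change variables back. The differences are twofold. First, you marginalize the joint density directly, whereas the paper runs an induction on $m$ and applies Bayes' rule to exhibit the unconditioned transition kernel; the induction additionally yields the Markov property of the unconditioned $\tau$-process, which the theorem as stated does not require, so your shortcut is legitimate. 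Second, you evaluate the key integral $\int h^2_{\lambda_X}(\tau^m)\,\Delta(\lambda_X)^2 e^{-|\lambda_X|}\prod_i\lambda_{X,i}^{A+m-n}\,d\lambda_X$ via the Cauchy identity of Proposition \ref{prop:mvb-cauchy}, after rewriting $e^{-|\lambda_X|}$ as $\BB_2^{n,A+m}(\lambda_X,-(1,\ldots,1))$ and $h^2_{\lambda_X}(\tau^m)$ as $\wB_2^{n,n}(\lambda_X,-\tau^m_{\mathrm{pad}})$ (which coincides with $\BB_2^{n,n}$ at $\theta=1$); the paper instead unfolds the integral into a Gaussian integral over $(A+m)\times n$ complex matrices and cites \cite[Theorem 6]{FR}. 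Both routes produce $\prod_i(1+\tau^m_i)^{-(A+m)}$, and your stated ``alternative route'' is essentially the paper's computation. One minor inaccuracy: the identity $e^{-|\lambda_X|}=\BB_2^{n,A+m}(\lambda_X,-(1,\ldots,1))$ does not follow from Lemma \ref{lem:hciz-complex}, which only covers the square case $\BB_2^{n,n}$; it follows instead directly from the integral formula (\ref{eq:int-rat}), since a constant specialization $s=(-1,\ldots,-1)$ contributes the factor $e^{-|\lambda|}$ independently of the Gelfand--Tsetlin pattern. Your exponent bookkeeping in the final change of variables, yielding $A+m-n-1$, is correct.
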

\begin{proof}
Change variables to $\tau^m_i := (\lambda^m_{m + 1 - i})^{-1} - 1 \in [0, \infty)$.  Note that 
\[
d\lambda^m_{m + 1 - i} = - \frac{1}{(1 + \tau^m_i)^2} d\tau^m_i = -(\lambda^m_{m + 1 - i})^2 d\tau^m_i
\]
and that $\tau^m \prec \tau^{m+1}$ if and only if $\lambda^m \prec \lambda^{m + 1}$.  Therefore, it suffices for us to check that $\{\tau^m_i\}$ are supported on $\tau^1 \prec \cdots \prec \tau^m$ with measure given by
\begin{equation} \label{eq:bg-ver-ans}
p_m(\tau^m, \ldots, \tau^1) d\tau = C_m\, \Delta(\tau^m) \prod_{i = 1}^m (1 + \tau^m_i)^{-A - m} (\tau^m_i)^{n - m} \prod_{l = 1}^m \prod_{i = 1}^l d\tau^l_i
\end{equation}
for some constant $C_m$. 

We proceed by induction on $m$; for $m = 1$, this is an easy computation.  Suppose that $p_{m - 1}(\tau^{m - 1}, \ldots \tau^1)$ has the desired form.  Note that $X^*X$ has the law of a complex Wishart matrix of rank $A$ and level $n$, which means that its eigenvalues $\{\lambda_{X, i}\}$ have the density
\[
p_X(\lambda_X) = C_{2, A, n} \Delta(\lambda_X)^2 e^{- \sum_i \lambda_{X, i}} \prod_i \lambda_{X, i}^{A - n}
\]
for some constant $C_{2, A, n}$.  Further, by Proposition \ref{prop:beta-jac-cond}, conditioned on $\{\lambda_{X, i}\}$, the process $\{\tau^m_i\}$ has the law of the eigenvalues of a multilevel generalized $\beta$-Wishart ensemble with parameters $\pi_i = \lambda_{X, i}$ and $\hpi_j = 0$.  By Theorem \ref{thm:gen-wish-dens}, conditional on $\lambda_{X, i}$, the $\{\tau^m_i\}$ are Markovian with transition kernel
\[
Q^{\lambda_X}_{m - 1, m}(\tau^{m-1}, d\tau^m) = \prod_{i = 1}^n \lambda_{X, i} \frac{\prod_{i = 1}^m (\tau^m_i)^{n - m}}{\prod_{i = 1}^{m - 1}(\tau^{m - 1}_i)^{n - m + 1}} \frac{h^2_{\lambda_X}(\tau^m) \Delta(\tau^m)}{h^2_{\lambda_X}(\tau^{m-1})\Delta(\tau^{m-1})} 1_{\tau^{m - 1} \prec \tau^m} d\tau^m.
\]
In particular, their joint density is
\[
p^{\lambda_X}_m(\tau^m, \ldots, \tau^1) = \prod_{i = 1}^n \lambda_{X, i}^m \prod_{i = 1}^m (\tau_i^m)^{n - m} h^2_{\lambda_X}(\tau^m) \Delta(\tau^m) 1_{\tau^1 \prec \cdots \prec \tau^m}.
\]
By Bayes' rule, we have for $\tau^1 \prec \cdots \prec \tau^m$ that
\[
p_X(\lambda_X \mid \tau^{m - 1}, \ldots, \tau^1) = \frac{p_X(\lambda_X) p^{\lambda_X}_{m - 1}(\tau^{m-1}, \ldots, \tau^1)}{p_{m-1}(\tau^{m-1}, \ldots, \tau^1)}.
\]
Applying the inductive hypothesis and substituting in, the transition density of the unconditioned process is therefore
\begin{align*}
Q_{m - 1, m}&(\tau^{m - 1}, \ldots, \tau^1, d\tau^m) = \int_{\lambda_X} Q^{\lambda_X}_{m - 1, m}(\tau^{m-1}, d\tau^m) p_X(\lambda_X \mid \tau^{m-1}, \ldots, \tau^1) d\lambda_X\\
&= \int_{\lambda_X} \frac{p_X(\lambda_X) p^{\lambda_X}_m(\tau^m, \ldots, \tau^1)}{p_{m - 1}(\tau^{m - 1}, \ldots, \tau^1)} d\lambda_X d\tau^m\\
&= C_{2, A, n} C_{m - 1}^{-1} \int_{\lambda_X}h^2_{\lambda_X}(\tau^m) \Delta(\lambda_X)^2 e^{-\sum_i \lambda_{X_i}} \prod_i \lambda_{X, i}^{A - n + m} d\lambda_X \\
&\phantom{============} \frac{\prod_{i = 1}^m (\tau_i^m)^{n - m} \Delta(\tau^m)}{\prod_{i = 1}^{m - 1} (1 + \tau^{m - 1}_i)^{-A - m + 1} \prod_{i = 1}^{m - 1}(\tau^{m - 1}_i)^{n - m + 1} \Delta(\tau^{m - 1})}  d\tau^m\\
&= C_{2, A, n} C_{m - 1}^{-1} \int_{Z \sim (A + m) \times n} e^{-\Tr(Z^*Z \tau^m)} e^{-\frac{1}{2}||Z||^2} dZ\\
&\phantom{============} \frac{\prod_{i = 1}^m (\tau_i^m)^{n - m} \Delta(\tau^m)}{\prod_{i = 1}^{m - 1} (1 + \tau^{m - 1}_i)^{-A - m + 1}\prod_{i = 1}^{m - 1}(\tau^{m - 1}_i)^{n - m + 1}  \Delta(\tau^{m - 1})}  d\tau^m\\
&= C \frac{\prod_{i = 1}^{m - 1} (1 + \tau^{m-1}_i)^{A + m - 1}}{\prod_{i = 1}^m (1 + \tau^m_i)^{A + m}}\frac{\Delta(\tau^m)}{\Delta(\tau^{m-1})} \frac{\prod_{i = 1}^m (\tau^m_i)^{n - m}}{\prod_{i = 1}^{m - 1}(\tau^{m - 1}_i)^{n - m + 1}} d\tau^m
\end{align*}
for some constant $C$, where the integral in the fourth line is over the space of $(A + m) \times n$ complex-valued matrices, the third equality follows from the definition of the complex HCIZ integral, and the fourth equality follows from \cite[Theorem 6]{FR}.  This shows that the level-to-level transitions are Markov and that the joint density takes the claimed form (\ref{eq:bg-ver-ans}).
\end{proof}

\begin{corr} \label{corr:bg-ver-ho}
For $\beta = 2$, the transformation $\mu^l_i := - \log \lambda^l_i$ of the eigenvalues of the multilevel $\beta$-Jacobi ensemble have the law of the Heckman-Opdam ensemble with parameters $\pi = (A - n + 1, \ldots, A)$ and $\hpi = (0, 1, \ldots)$.
\end{corr}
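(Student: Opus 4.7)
The plan is to derive the corollary directly from Theorem~\ref{thm:bg-ver} by performing the change of variables $\lambda^l_i = e^{-\mu^l_i}$ and matching the resulting density against the Heckman-Opdam density (\ref{eq:ho-density}) specialized at $\theta = 1$, $\pi = (A-n+1, \ldots, A)$, and $\hpi = (0, 1, 2, \ldots)$.

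First I would carry out the change of variables. The Jacobian contributes $\prod_{l=1}^m \prod_{i=1}^l e^{-\mu^l_i} = \prod_{l=1}^m e^{-|\mu^l|}$; combining it with $\prod_{l=1}^{m-1}\prod_{i=1}^l (\lambda^l_i)^{-2} = \prod_{l=1}^{m-1} e^{2|\mu^l|}$, $\Delta(\lambda^m) = \Delta(e^{-\mu^m})$, $\prod_i (\lambda^m_i)^{A+m-n-1} = e^{-(A+m-n-1)|\mu^m|}$, and $\prod_i (1-\lambda^m_i)^{n-m} = \prod_i (1-e^{-\mu^m_i})^{n-m}$, the density of Theorem~\ref{thm:bg-ver} becomes proportional, in the new coordinates, to
\[
\Delta(e^{-\mu^m})\, e^{-(A+m-n)|\mu^m|}\prod_{i=1}^m (1-e^{-\mu^m_i})^{n-m}\prod_{l=1}^{m-1}e^{|\mu^l|}.
\]

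Second, I would simplify (\ref{eq:ho-density}) at the stated parameters. At $\theta = 1$ all $(\theta-1)$-exponents trivialize, $\wF_2^{m,n} = \FF_2^{m,n}$, and $\Deltat(\mu^m) = e^{(m-1)|\mu^m|/2}\Delta(e^{-\mu^m})$. Abel summation with $\hpi_l = l-1$ gives $\sum_{l=1}^m \hpi_l(|\mu^l|-|\mu^{l-1}|) = (m-1)|\mu^m| - \sum_{l=1}^{m-1}|\mu^l|$, and the product of $\Gamma$-ratios is a pure constant absorbed into the normalization. Comparing with the Jacobi expression above, the two densities match (up to normalization) if and only if
\[
\FF_2^{m,n}(\mu^m,-\pi) = e^{-(A-n+1+(m-1)/2)|\mu^m|}.
\]

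Third, and most substantively, I would verify this evaluation via Corollary~\ref{corr:mac-ho-scale}. At $\theta = 1$ the Macdonald polynomials reduce to Schur polynomials, which are homogeneous of degree $|\lambda|$ in their arguments. Writing $t = e^{-\eps}$, the specialization $(e^{-\eps\pi_1},\ldots,e^{-\eps\pi_n}) = (t^{A-n+1},\ldots,t^A)$ is exactly the principal specialization $(1,t,\ldots,t^{n-1})$ uniformly rescaled by $t^{A-n+1}$, so by homogeneity
\[
s_{\lfloor\eps^{-1}\mu^m\rfloor}(t^{A-n+1},\ldots,t^A) = t^{(A-n+1)|\lfloor\eps^{-1}\mu^m\rfloor|}\, s_{\lfloor\eps^{-1}\mu^m\rfloor}(1,t,\ldots,t^{n-1}).
\]
The ratio in Corollary~\ref{corr:mac-ho-scale} therefore tends to $e^{-(A-n+1)|\mu^m|}$, which after dividing by the $e^{(m-1)|\mu^m|/2}$ prefactor yields the displayed evaluation of $\FF_2^{m,n}(\mu^m,-\pi)$.

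The main obstacle is essentially bookkeeping: tracking the several exponential prefactors (Jacobian, Vandermonde conversion, Abel-summed $\hpi$ contribution, and the $\FF_2^{m,n}$ evaluation) and verifying that their coefficients of $|\mu^m|$ and each $|\mu^l|$ align. The single non-routine input is the Schur-homogeneity evaluation of $\FF_2^{m,n}$ at the shifted principal specialization; once that is in place, the two normalized probability measures coincide term by term and the corollary follows.
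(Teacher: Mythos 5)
Your proof is correct and follows essentially the same route as the paper: change variables with the Jacobian $\prod_l e^{-|\mu^l|}$, reduce the Jacobi density to $\Delta(e^{-\mu^m})e^{-(A+m-n)|\mu^m|}\prod_i(1-e^{-\mu^m_i})^{n-m}\prod_{l=1}^{m-1}e^{|\mu^l|}$, and match against (\ref{eq:ho-density}) via an explicit evaluation of the principally specialized function $\wF_2^{m,n}(\mu^m,-\pi)$. You go a bit further by actually justifying that evaluation through Schur homogeneity and Corollary \ref{corr:mac-ho-scale} (the paper merely asserts it), and your exponent $e^{-\frac{m-1}{2}|\mu^m|-(A-n+1)|\mu^m|}$ is the one consistent with the bookkeeping, whereas the paper's displayed $e^{-\frac{n-1}{2}|\mu^m|-(A-n+1)|\mu^m|}$ appears to carry over the letter $n$ from the general statement of Corollary \ref{corr:mac-ho-scale} and agrees only when $m=n$.
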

\begin{proof}
By Theorem \ref{thm:bg-ver}, the joint density of $\lambda^1 \prec \cdots \prec \lambda^m$ is proportional to
\[
\Delta(\lambda^m) \prod_{i = 1}^m (\lambda^m_i)^{A + m - n - 1} (1 - \lambda^m_i)^{n - m}\prod_{l = 1}^{m - 1} \prod_{i = 1}^l (\lambda^l_i)^{-2}
\]
for $\lambda^l_i$ supported in $[0, 1]$.  Changing variables to $\mu_i^l = - \log\lambda^l_i$, the density of $\mu^1 \prec \cdots \prec \mu^m$ is proportional to
\[
\Delta(e^{-\mu^m}) e^{- (A + m - n)|\mu^m|} \prod_{i = 1}^m (1 - e^{-\mu^m_i})^{n - m} \prod_{l = 1}^{m - 1} e^{|\mu^l|}.
\]
For $\pi = (A - n + 1, \ldots, A)$, note that 
\[
\wF_2^{m, n}(\mu^m, -\pi) = e^{-\frac{n - 1}{2} |\mu^m| - (A - n + 1)|\mu^m|},
\]
which implies the desired by substitution into (\ref{eq:ho-density}).
\end{proof}

\subsection{Matrix model at $\beta = 1$}

In this section we state and prove Theorem \ref{thm:bg-ver-beta1} and Corollary \ref{corr:bg-ver-ho-beta1} giving an explicit probability density for the multilevel $\beta$-Jacobi ensemble at $\beta = 1$ and identifying it with a principally specialized Heckman-Opdam ensemble.

\begin{theorem} \label{thm:bg-ver-beta1}
For $\beta = 1$, the eigenvalues $\{\lambda^l_i\}$ of the first $m$ levels of the $\beta$-Jacobi ensemble with parameters $(A, n)$ are supported on interlacing sequences
\[
\lambda^1 \prec \cdots \prec \lambda^m \qquad \lambda^l_i \in [0, 1]
\]
with joint density given by
\[
\wJ_{1/2, m, n} \Delta(\lambda^m) \prod_{i = 1}^m (\lambda^m_i)^{\frac{A + m - n - 4}{2}} (1 - \lambda^m_i)^{\frac{n - m + 1}{2}} \prod_{l = 1}^{m - 1} \prod_{i = 1}^l (\lambda^l_i)^{-1} \Delta(\lambda^l) \Delta(\lambda^l, \lambda^{l + 1})^{-1/2}
\]
for a normalization constant $\wJ_{1/2, m, n}$.
\end{theorem}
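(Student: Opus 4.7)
The plan is to follow the same strategy as the proof of Theorem \ref{thm:bg-ver}, substituting the $\beta = 1$ analogues of each ingredient. First I would change variables to $\tau^m_i := (\lambda^m_{m+1-i})^{-1} - 1 \in [0,\infty)$, using $d\lambda^m_{m+1-i} = -(\lambda^m_{m+1-i})^2 d\tau^m_i$ and the fact that $\tau^1 \prec \cdots \prec \tau^m$ iff $\lambda^1 \prec \cdots \prec \lambda^m$, and translate the claimed density into one in the $\tau$ variables. The target takes the form
\[
p_m(\tau^m, \ldots, \tau^1) = C_m \Delta(\tau^m) \prod_{i=1}^m (1 + \tau^m_i)^{-(A+m)/2} (\tau^m_i)^{(n-m-1)/2} \prod_{l=1}^{m-1} \Delta(\tau^l) \Delta(\tau^l, \tau^{l+1})^{-1/2}
\]
up to a careful bookkeeping of exponents coming from the Jacobian $\prod (\lambda^l_i)^2 d\tau$ and the transformation of the various Vandermonde factors.

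Next, I would proceed by induction on $m$, with the base case $m=1$ being a direct calculation of the marginal eigenvalue density of $J_1$. For the inductive step, I would apply Proposition \ref{prop:beta-jac-cond} to obtain that conditioned on the eigenvalues $\{\lambda_{X,i}\}$ of $(X^{An})^\top X^{An}$, the process $\{\tau^m_i\}$ is a multilevel generalized $\beta$-Wishart ensemble at $\beta = 1$ with parameters $\pi_i = \lambda_{X,i}$, $\hpi_j = 0$. Theorem \ref{thm:beta1-wish} then gives the conditional Markov transition kernel explicitly in terms of the real HCIZ integral $h^1_{\lambda_X}(\tau^m)$, with telescoping products involving $(\tau^l_i)^{(n-l-1)/2}$ and $\Delta(\tau^l)\Delta(\tau^l,\tau^{l-1})^{-1/2}$.

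To obtain the unconditional density, I would apply Bayes' rule exactly as in the proof of Theorem \ref{thm:bg-ver}, integrating against the $\beta = 1$ real Wishart eigenvalue density $p_X(\lambda_X) \propto \Delta(\lambda_X) \prod_i \lambda_{X,i}^{(A-n-1)/2} e^{-\lambda_{X,i}/2}$. Unfolding the definition of $h^1_{\lambda_X}(\tau^m)$ as a Haar integral over $O(n)$ and combining with the Wishart density, one recognizes the $\lambda_X$ integral as the expectation of $e^{-\Tr(Z^\top Z \tau^m)/2}$ under a real Gaussian matrix $Z$ of size $(A+m) \times n$, which evaluates to $\prod_i (1 + \tau^m_i)^{-(A+m)/2}$. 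The remaining normalization factors including the Vandermonde $\Delta(\lambda_X)$ and the power of $\lambda_{X,i}$ are absorbed via the $\beta=1$ analogue of \cite[Theorem 6]{FR}, which gives the Laplace transform of the real Wishart density in terms of the eigenvalues of the trace-coupling matrix.

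The main obstacle, as in the $\beta=2$ case, is the bookkeeping: keeping track of the half-integer exponents that arise from the $\Delta(\tau^l,\tau^{l-1})^{-1/2}$ factors in Theorem \ref{thm:beta1-wish}, the interaction with the indicator $1_{m \leq n}$, and verifying that the Jacobian from the $\tau \mapsto \lambda$ change of variables produces exactly the exponents $(A+m-n-4)/2$ on $\lambda^m_i$ and $(n-m+1)/2$ on $(1 - \lambda^m_i)$ in the statement. I would also need to invoke the $\beta=1$ version of \cite[Theorem 6]{FR} in place of the complex one used in the $\beta=2$ proof; verifying this real-matrix Laplace transform identity is the one nontrivial analytic input beyond the explicit kernel of Theorem \ref{thm:beta1-wish}.
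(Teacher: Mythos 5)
Your proposal follows the paper's proof essentially verbatim: the same change of variables to $\tau^m_i = (\lambda^m_{m+1-i})^{-1}-1$, induction on $m$ with a direct base case, conditioning on the Wishart eigenvalues $\lambda_X$ via Proposition \ref{prop:beta-jac-cond} and Theorem \ref{thm:beta1-wish}, and evaluation of the $\lambda_X$-integral as a real Gaussian matrix integral yielding $\prod_i(1+\tau^m_i)^{-(A+m)/2}$. The only cosmetic difference is that the paper does not actually need a $\beta=1$ analogue of \cite[Theorem 6]{FR}: once the $h^1_{\lambda_X}$ integral is unfolded into an integral over $(A+m)\times n$ real Gaussian matrices, the exponent $\frac{1}{2}\Tr(Z^TZ\tau^m)+\frac{1}{2}\|Z\|^2$ diagonalizes entrywise and the evaluation reduces to elementary one-dimensional Gaussian integrals.
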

\begin{proof}
As in the proof of Theorem \ref{thm:bg-ver}, change variables to $\tau^m_i := (\lambda^m_{m + 1 - i})^{-1} - 1 \in [0, \infty)$.  We wish to check that $\{\tau^m_i\}$ are supported on $\{\tau^1 \prec \cdots \prec \tau^m\}$ with density given by
\[ \label{eq:bg-beta1-des}
p_m(\tau^m, \ldots, \tau^1) = C_m\Delta(\tau^m) \prod_{i = 1}^m (1 + \tau^m_i)^{-\frac{A + m}{2}} (\tau^m_i)^{\frac{n - m + 1}{2}} \prod_{l = 1}^{m - 1}  \Delta(\tau^l) \Delta(\tau^l, \tau^{l + 1})^{-1/2}
\]
for some normalization constant $C_m$. 

We proceed by induction on $m$; for $m = 1$, the result follows from direct computation.  Suppose that $p_{m - 1}(\tau^{m - 1}, \ldots, \tau^{1}$ has the desired form.  Note that $X^*X$ has the law of a real Wishart matrix of rank $A$ and level $n$, which means that its eigenvalues $\{\lambda_{X, i}\}$ have the density
\[
p_X(\lambda_X) = C_{1, A, n} \Delta(\lambda_X) e^{-\frac{1}{2} \sum_i \lambda_{X, i}} \prod_i \lambda_{X, i}^{\frac{A - n - 1}{2}}
\]
for some constant $C_{1, A, n}$.  By Proposition \ref{prop:beta-jac-cond}, conditioned on $\{\lambda_{X, i}\}$, the process $\{\tau^m_i\}$ has the law of the eigenvalues of a multilevel generalized $\beta$-Wishart ensemble with parameters $\pi_i = \lambda_{X, i}$ and $\hpi_j = 0$.  By Theorem \ref{thm:beta1-wish}, conditional on $\{\lambda_{X, i}\}$, the $\{\tau^m_i\}$ are Markovian with some transition kernel $Q^{\lambda_X}_{m - 1, m}(\tau^{m - 1}, d\tau^m)$ and joint density 
\[
p^{\lambda_X}_m(\tau^m, \ldots, \tau^1) = \frac{\prod_{i = 1}^n (\lambda_{X, i})^{m/2}}{\Gamma(m/2) \cdots \Gamma(1/2) \Gamma(1/2)^{m (m + 1)/2}} \prod_{i = 1}^m (\tau^m_i)^{\frac{n - m - 1}{2}} h^1_{\lambda_X}(\tau^m) \prod_{l = 1}^m \Delta(\tau^l) \Delta(\tau^l, \tau^{l - 1})^{-1/2}.
\]
Therefore, we may compute the joint density of the unconditioned process to be
\begin{align*}
p_m(\tau^m, \ldots, \tau^1) &= \int_{\lambda_X} p_X(\lambda_X) p^{\lambda_X}_m(\tau^m, \ldots, \tau^1) d\lambda_X \\
&= \frac{C_{1, A, n}}{\Gamma(m/2) \cdots \Gamma(1/2) \Gamma(1/2)^{m(m + 1)/2}} \int_{\lambda_X} h^1_{\lambda_X}(\tau^m) \Delta(\lambda_X) e^{-\frac{1}{2}\sum_i \lambda_{X, i}} \prod_{i = 1}^n \lambda_{X, i}^{\frac{A + m - n -1}{2}} d\lambda_X \\
&\phantom{====} \prod_{i = 1}^m (\tau^m_i)^{\frac{n - m - 1}{2}} \prod_{l = 1}^m \Delta(\tau^l) \Delta(\tau^l, \tau^{l - 1})^{-1/2}.
\end{align*}
Notice now that 
\begin{align*}
\int_{\lambda_X} h^1_{\lambda_X}(\tau^m) \Delta(\lambda_X) e^{-\frac{1}{2}\sum_i \lambda_{X, i}} \prod_{i = 1}^n \lambda_{X, i}^{\frac{A + m - n -1}{2}} d\lambda_X &= \int_{Z \sim (A + m) \times n} e^{-\frac{1}{2} \Tr(Z^TZ \tau^m)} e^{-\frac{1}{2}||Z||^2} dZ\\
&= \int_{Z_{ij}} e^{-\frac{1}{2} \sum_{i = 1}^{A + m} \sum_{j = 1}^m (\tau^m_j + 1) Z_{ij}^2} d_{Z_{ij}}\\
&= \prod_{j = 1}^m (1 + \tau^m_j)^{-\frac{A + m}{2}}.
\end{align*}
Substituting back in, we obtain that 
\[
p_m(\tau^m, \ldots, \tau^1) = C \prod_{i = 1}^m (\tau^m_i)^{\frac{n - m - 1}{2}} (1 + \tau^m_i)^{-\frac{A + m}{2}} \prod_{l = 1}^m \Delta(\tau^l) \Delta(\tau^l, \tau^{l - 1})^{-1/2}
\]
for some constant $C$, as desired.
\end{proof}

\begin{corr} \label{corr:bg-ver-ho-beta1}
For $\beta = 1$, the transformation $\mu^l_i := - \log \lambda^l_i$ of the eigenvalues of the multilevel $\beta$-Jacobi ensemble have the law of the Heckman-Opdam ensemble with parameters $\pi = (A - n + 1, \ldots, A)$ and $\hpi = (0, 1, \ldots)$.
\end{corr}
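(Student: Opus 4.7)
The plan is to imitate the proof of Corollary \ref{corr:bg-ver-ho}, replacing the complex HCIZ inputs by their real ($\theta = 1/2$) analogues. I would start with the joint density of $\lambda^1 \prec \cdots \prec \lambda^m$ supplied by Theorem \ref{thm:bg-ver-beta1} and change variables via $\lambda^l_i = e^{-\mu^l_i}$, with total Jacobian $\prod_{l, i} e^{-\mu^l_i} = \prod_{l = 1}^m e^{-|\mu^l|}$. The goal is to show that the resulting $\mu$-density agrees, up to a multiplicative constant, with the Heckman-Opdam density (\ref{eq:ho-density}) at $\theta = 1/2$, $\pi = (A - n + 1, \ldots, A)$, and $\hpi = (0, 1, 2, \ldots)$.

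The main computational input is an explicit formula for $\wF_1^{m, n}(\mu^m, -\pi/2)$ at this principal specialization, obtained as in the $\beta = 2$ case by exploiting translation invariance. Decomposing
\[
-\pi/2 = -\tfrac{A - n + 1}{2}\mathbf{1} + \bigl(0, -\tfrac{1}{2}, -1, \ldots, -\tfrac{n - 1}{2}\bigr)
\]
and using the identity $\FF_1^{m, n}(\lambda, s + c\mathbf{1}) = e^{c|\lambda|} \FF_1^{m, n}(\lambda, s)$, which is immediate from the telescoping sum $\sum_l c(|\mu^l| - |\mu^{l - 1}|) = c|\lambda|$ inside the integral definition of $\Phi_{1/2}^{m, n}$, one reduces to the principal specialization. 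Applying Corollary \ref{corr:mac-ho-scale} with $s_l = -(l - 1)/2$ forces the Macdonald ratio on the left to be identically $1$, so $\FF_1^{m, n}\bigl(\mu^m, (0, -\tfrac{1}{2}, \ldots, -\tfrac{n - 1}{2})\bigr) = e^{-(m - 1)|\mu^m|/4}$, whence
\[
\FF_1^{m, n}(\mu^m, -\pi/2) = e^{-\frac{A - n + 1}{2}|\mu^m| - \frac{m - 1}{4}|\mu^m|} \quad \text{and} \quad \wF_1^{m, n}(\mu^m, -\pi/2) = \Gamma(1/2)^{-m} \prod_{i = 1}^m (1 - e^{-\mu^m_i})^{-1/2}\, \FF_1^{m, n}(\mu^m, -\pi/2).
\]

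With this formula in hand, I would substitute into (\ref{eq:ho-density}) and simplify via three standard manipulations: rewrite $\Deltat(\mu^m)^{1/2}$ as $e^{(m - 1)|\mu^m|/4}\, \Delta(e^{-\mu^m})^{1/2}$ (up to sign); expand the $\hpi$-exponential via summation by parts as $\sum_{l = 1}^m (l - 1)(|\mu^l| - |\mu^{l - 1}|) = (m - 1)|\mu^m| - \sum_{l = 1}^{m - 1} |\mu^l|$, so that its intermediate $|\mu^l|/2$ contributions exactly cancel the $e^{(\theta - 1)|\mu^l|} = e^{-|\mu^l|/2}$ factors for $l < m$; and observe that the interlacing product
\[
\prod_{l = 1}^{m - 1} \Delta(e^{-\mu^l})^{1/2}\, \Delta(e^{-\mu^{l + 1}})^{1/2}\, \Delta(e^{-\mu^l}, e^{-\mu^{l + 1}})^{-1/2}
\]
telescopes (using $\Delta(e^{-\mu^1}) = 1$), and its $\Delta(e^{-\mu^l})$-contributions combine with the $\Delta(e^{-\mu^m})^{1/2}$ from $\Deltat^{1/2}$ to yield precisely the single-level Vandermondes $\prod_{l = 1}^{m - 1}\Delta(e^{-\mu^l}) \cdot \Delta(e^{-\mu^m})$ appearing in the transformed Jacobi density, alongside the $\Delta(e^{-\mu^l}, e^{-\mu^{l + 1}})^{-1/2}$ factors shared by both sides.

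The main obstacle is the bookkeeping of these many half-integer exponents on the Vandermondes, the $(1 - e^{-\mu^l_i})$ factors, and the $|\mu^l|$-exponentials; they must conspire to simplify simultaneously. It is natural to handle the intermediate-level cancellations first (involving only the $e^{(\theta - 1)|\mu^l|}$ and the $\hpi$-exponential) and then resolve the $|\mu^m|$-level terms by combining the contributions from $\Deltat(\mu^m)^{1/2}$, from $\wF_1^{m, n}(\mu^m, -\pi/2)$, and from the Jacobian of the change of variables. Once the remaining powers of $e^{-|\mu^m|}$ and $(1 - e^{-\mu^m_i})$ match on both sides, the identification is complete.
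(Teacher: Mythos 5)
Your proposal follows the paper's proof essentially verbatim: both start from the joint density supplied by Theorem \ref{thm:bg-ver-beta1}, change variables by $\lambda^l_i = e^{-\mu^l_i}$, and match the result against (\ref{eq:ho-density}) using the evaluation $\wF_1^{m,n}(\mu^m, -\pi/2) = \Gamma(1/2)^{-m} e^{-\frac{m-1}{4}|\mu^m| - \frac{A-n+1}{2}|\mu^m|}\prod_{i=1}^m(1-e^{-\mu^m_i})^{-1/2}$, which the paper simply asserts and you correctly derive from translation invariance of the integral formula together with the principal specialization in Corollary \ref{corr:mac-ho-scale}. The bookkeeping you outline (summation by parts on the $\hpi$-exponential, telescoping of the interlacing Vandermondes using $\Delta(e^{-\mu^1})=1$, and the conversion $\Deltat(\mu^m)^{1/2} = e^{\frac{m-1}{4}|\mu^m|}\Delta(e^{-\mu^m})^{1/2}$) is exactly what is implicit in the paper's one-line substitution, so the two arguments coincide.
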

\begin{proof}
In this case, by Theorem \ref{thm:bg-ver-beta1}, the joint density of $\lambda^1 \prec \cdots \prec \lambda^m$ is proportional to
\[
\Delta(\lambda^m) \prod_{i = 1}^m (\lambda^m_i)^{\frac{A + m - n - 4}{2}} (1 - \lambda^m_i)^{\frac{n - m + 1}{2}} \prod_{l = 1}^{m - 1} \prod_{i = 1}^l (\lambda^l_i)^{-1} \Delta(\lambda^l) \Delta(\lambda^l, \lambda^{l + 1})^{-1/2}
\]
for $\lambda^l_i$ supported in $[0, 1]$.  Changing variables to $\mu^l_i = - \log \lambda^l_i$, the density of $\mu^1 \prec \cdots \prec \mu^m$ is proportional to
\[
\Delta(e^{-\mu^m}) e^{- \frac{A + m - n - 2}{2}|\mu^m|} \prod_{i = 1}^m (1 - e^{-\mu^m_i})^{\frac{n - m + 1}{2}} \prod_{l = 1}^{m - 1} \Delta(e^{-\mu^l}) \Delta(e^{-\mu^l}, e^{-\mu^{l + 1}})^{-1/2}.
\]
Note that for $\pi = (A - n + 1, \ldots, A)$, we have 
\[
\wF^{m, n}_1(\mu^m, -\pi/2) = \Gamma(1/2)^{-m} e^{-\frac{m - 1}{4}|\mu^m| - \frac{A - n + 1}{2}|\mu^m|} \prod_{i = 1}^m (1 - e^{-\mu^m_i})^{-1/2},
\]
which implies the desired by substitution into (\ref{eq:ho-density}).
\end{proof}

\appendix

\section{Elementary asymptotic relations} \label{sec:asymp}

In this section we collect some elementary results on limits of various special functions.

\begin{lemma}[{\cite[Lemma 2.4]{BG}}] \label{lem:qpoch-asymp}
For $a, b \in \CC$ and $u(q)$ a complex-valued function defined in a neighborhood of $1$ so that $\lim_{q \to 1} u(q) = u$ with $0 < u < 1$, we have that 
\[
\lim_{q \to 1} \frac{(q^a u(q); q)}{(q^b u(q); q)} = (1 - u)^{b - a}.
\]
\end{lemma}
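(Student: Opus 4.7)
The plan is to reduce to a standard computation with convergent power series by passing to logarithms. First I would write
\[
\log \frac{(q^a u(q); q)_\infty}{(q^b u(q); q)_\infty} = \sum_{k=0}^\infty \log\frac{1 - q^{a+k}u(q)}{1 - q^{b+k}u(q)}.
\]
Since $u(q) \to u$ with $0 < u < 1$, one has $|u(q)| \leq r$ for some fixed $r \in (u, 1)$ throughout a sufficiently small neighborhood of $q = 1$. Expanding each logarithm via $\log(1 - x) = -\sum_{n \geq 1} x^n/n$ and interchanging the two summations (justified by absolute convergence, using $|u(q)| < 1$), the inner geometric sum $\sum_{k \geq 0} q^{nk} = 1/(1-q^n)$ yields
\[
\log \frac{(q^a u(q); q)_\infty}{(q^b u(q); q)_\infty} = \sum_{n=1}^\infty \frac{u(q)^n}{n} \cdot \frac{q^{bn} - q^{an}}{1 - q^n}.
\]

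For each fixed $n$, a Taylor expansion in $\eps = -\log q$ gives $\frac{q^{bn} - q^{an}}{1 - q^n} \to a - b$ as $q \to 1$, while $u(q)^n \to u^n$. If the exchange of limit and sum can be justified, the limit of the series would equal
\[
(a - b)\sum_{n = 1}^\infty \frac{u^n}{n} = -(a - b)\log(1 - u) = (b - a)\log(1 - u),
\]
and exponentiating then gives the desired $(1 - u)^{b - a}$.

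The main obstacle is justifying the interchange of $\lim_{q \to 1}$ with the infinite sum. I would apply dominated convergence using the bound $|q^{bn} - q^{an}| \leq |b - a| \cdot n|\log q| \cdot q^{n \min(\Re a, \Re b)}$, obtained by integrating $\frac{d}{dt} q^{(a + t(b-a))n}$ over $t \in [0, 1]$ and noting that $|q^{(a + t(b-a))n}| = q^{n(\Re a + t \Re(b - a))}$ is maximized at the endpoint giving the smaller real part of the exponent. Combined with the elementary inequality $\frac{x}{1 - e^{-x}} \leq 1 + x$ for $x = n|\log q| \geq 0$ (equivalent to $e^x \geq 1 + x$), the $n$-th summand is bounded in absolute value by
\[
\frac{|b - a|\, r^n}{n}\bigl(1 + n|\log q|\bigr) \cdot q^{n \min(\Re a, \Re b)}.
\]
For $q$ sufficiently close to $1$, the quantity $r \cdot q^{\min(\Re a, \Re b)}$ remains strictly less than $1$, so this upper bound is a convergent geometric-times-polynomial series that is uniform in $q$ on a neighborhood of $1$. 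This furnishes the dominating function, and dominated convergence completes the proof.
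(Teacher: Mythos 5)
Your proposal is correct, but there is nothing in the paper to compare it against: the paper imports this lemma verbatim as \cite[Lemma 2.4]{BG} and gives no proof, so your argument functions as a self-contained substitute rather than a variant of an internal proof. The mechanics check out: the telescoping of the logarithm of the product, the exchange of the $k$- and $n$-sums, the pointwise limit $\frac{q^{bn}-q^{an}}{1-q^n} \to a-b$ (write $q = e^{-\eps}$ and Taylor expand), and the dominating bound obtained from $|q^{bn}-q^{an}| \leq |b-a|\, n|\log q|\, q^{n\min(\Re a, \Re b)}$ together with $x/(1-e^{-x}) \leq 1+x$ are all valid, and summing $(a-b)u^n/n$ indeed gives $(b-a)\log(1-u)$, whence the claim after exponentiating. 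Two small points should be made explicit. First, the argument throughout takes $q \to 1^-$ along real values, so that $1 - q^n > 0$, $q^x$ is monotone in real $x$, and $q^a = e^{a \log q}$ is unambiguous; this is consistent with every use of the lemma in the paper, where $q = e^{-\eps}$. Second, your justification of the $(k,n)$ interchange ``by absolute convergence, using $|u(q)| < 1$'' is slightly too weak when $\min(\Re a, \Re b) < 0$: what is actually needed is $q^{\min(\Re a, \Re b)}|u(q)| \leq r'' < 1$ on a neighborhood of $q = 1$, which holds because $u(q) \to u < 1$ and $q^{\min(\Re a,\Re b)} \to 1$. This is precisely the condition you invoke later for the dominating series, and it serves a third purpose you did not mention: it guarantees that every factor $1 - q^{a+k}u(q)$ and $1 - q^{b+k}u(q)$ lies in the disk of radius $r'' < 1$ about $1$, so no factor vanishes and the principal-branch logarithms in your opening display are well defined. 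Stating this neighborhood restriction once at the outset closes all three issues simultaneously, and with that the proof is complete.
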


\begin{lemma} \label{lem:fin-qpoch-asymp}
For any $a, b$, $u(q)$ a complex-valued function defined in a neighborhood of $1$ so that $\lim_{q \to 1} u(q) = u$ with $0 < u < 1$, and $m(q)$ with $\lim_{q \to 1} q^{m(q)} = e^m$, we have that 
\[
\lim_{q \to 1} \frac{(q^au(q); q)_{m(q)}}{(q^b u(q); q)_{m(q)}} = \frac{(1 - u)^{b - a}}{(1 - u e^m)^{b - a}}.
\]
\end{lemma}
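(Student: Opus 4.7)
The idea is to reduce the finite $q$-Pochhammer ratio to infinite ones so that Lemma \ref{lem:qpoch-asymp} applies directly. Using the telescoping identity $(x;q)_N = (x;q)/(xq^N;q)$, I first rewrite
\[
\frac{(q^a u(q); q)_{m(q)}}{(q^b u(q); q)_{m(q)}} = \frac{(q^a u(q); q)}{(q^b u(q); q)} \cdot \frac{(q^{b+m(q)} u(q); q)}{(q^{a+m(q)} u(q); q)}.
\]
By Lemma \ref{lem:qpoch-asymp}, the first factor on the right tends to $(1-u)^{b-a}$ as $q \to 1$.

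For the second factor, introduce $v(q) := q^{m(q)} u(q)$, so that by hypothesis $v(q) \to u e^m$ as $q \to 1$. The second factor then takes the form $(q^b v(q);q)/(q^a v(q);q)$, and a second application of Lemma \ref{lem:qpoch-asymp} (with the roles of $a$ and $b$ swapped, and $v$ playing the role of the auxiliary function) gives limit $(1 - u e^m)^{a-b}$. Multiplying the two limits yields exactly the claimed identity $(1-u)^{b-a}/(1-ue^m)^{b-a}$.

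The only subtle point, and the one that merits checking, is the applicability of Lemma \ref{lem:qpoch-asymp} to the second factor: this requires that $ue^m$ lie in $(0,1)$ so that the infinite $q$-Pochhammers converge and the power $(1 - ue^m)^{a-b}$ is well-defined. In all invocations of this lemma later in the paper (in particular in Lemma \ref{lem:b-ho-scaling}), $m$ will appear as a difference between consecutive entries of a strictly decreasing sequence $\lambda_1 > \cdots > \lambda_n$, hence $m < 0$ and $ue^m < u < 1$, so the hypothesis is harmless. If one wanted a fully general statement, the conclusion extends by analytic continuation in $a$ and $b$, since the right-hand side is entire in these two variables.
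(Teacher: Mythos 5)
Your proof is correct and is essentially identical to the paper's: both use the telescoping identity $(x;q)_N = (x;q)_\infty/(xq^N;q)_\infty$ to split the finite ratio into two infinite ratios and apply Lemma \ref{lem:qpoch-asymp} to each, the second with the auxiliary function $q^{m(q)}u(q) \to ue^m$. Your additional remark on the hypothesis $0 < ue^m < 1$ is a reasonable point of care that the paper leaves implicit, but it does not change the argument.
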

\begin{proof}
By Lemma \ref{lem:qpoch-asymp}, we obtain
\[
\lim_{q \to 1} \frac{(q^au(q); q)}{(q^a q^{m(q)}u(q); q)} \frac{(q^b q^{m(q)}u(q);q)}{(q^b u(q); q)} = \frac{(1 - u)^{b - a}}{(1 - u e^m)^{b - a}}. \qedhere
\]
\end{proof}

\begin{lemma}[{\cite[Corollary 10.3.4]{AAR}}] \label{lem:qgamma}
For $x \in \CC - \{0, -1, \ldots\}$, we have 
\[
\lim_{q \to 1^-} (1 - q)^{1 - x} \frac{(q; q)}{(q^x; q)} = \Gamma(x).
\]
\end{lemma}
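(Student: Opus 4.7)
The plan is to identify the left-hand side as the $q$-Gamma function $\Gamma_q(x) := (1-q)^{1-x}(q;q)/(q^x;q)$ and show, via the Bohr-Mollerup characterization, that the pointwise limit $F(x) := \lim_{q \to 1^-} \Gamma_q(x)$ agrees with $\Gamma(x)$ on $(0,\infty)$; the extension to $x \in \CC \setminus \{0, -1, -2, \ldots\}$ then follows by analytic continuation. The strategy mirrors the standard textbook treatment and avoids any direct manipulation of the infinite $q$-product in the complex domain.

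First I would verify the basic algebraic properties of $\Gamma_q$. Factoring $(q^x; q) = (1-q^x)(q^{x+1}; q)$ yields the functional equation
\[
\Gamma_q(x+1) = \frac{1-q^x}{1-q}\,\Gamma_q(x), \qquad \Gamma_q(1) = 1.
\]
Iterating this gives $\Gamma_q(n+1) = \prod_{k=1}^n (1-q^k)/(1-q)$, whose termwise limit as $q \to 1^-$ is $n! = \Gamma(n+1)$. Passing to $q \to 1^-$ in the recursion itself (for fixed $x$) gives the classical functional equation $F(x+1) = xF(x)$.

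Second, I would establish log-convexity of $\Gamma_q$ in $x$ on $(0,\infty)$. Writing
\[
\log \Gamma_q(x) = (1-x)\log(1-q) + \sum_{k=0}^\infty \bigl[\log(1-q^{k+1}) - \log(1-q^{k+x})\bigr],
\]
the $x$-dependent summand $-\log(1-q^{k+x})$ has second $x$-derivative $q^{k+x}(\log q)^2/(1-q^{k+x})^2 > 0$ for $0 < q < 1$, so $\log \Gamma_q$ is convex in $x$. Convexity passes to the pointwise limit where it is finite, so $\log F$ is convex on $(0,\infty)$. The Bohr-Mollerup theorem, combined with $F(1) = 1$ and $F(x+1) = xF(x)$, forces $F(x) = \Gamma(x)$ for all real $x > 0$.

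Finally, to reach arbitrary $x \in \CC \setminus \{0, -1, -2, \ldots\}$, I would reduce via the functional equation to a right half-plane $\{\operatorname{Re}(x) > c\}$, where both $\Gamma_q(x)$ and $\Gamma(x)$ are holomorphic in $x$ and the infinite product $\prod_{k \geq 0}(1-q^{k+x})^{-1}$ converges locally uniformly. A local uniform bound $|\Gamma_q(x)| \leq C_K$ on compacts $K$ of this half-plane, uniform in $q$ close to $1$, follows by comparing the tail of $\sum_k \log(1-q^{k+x})$ with the convergent sum $\sum_k q^{k+\operatorname{Re} x}$. Vitali's convergence theorem then upgrades the real-axis pointwise convergence to local uniform convergence on compacts, so $F(x) = \Gamma(x)$ throughout the half-plane, and the functional equation propagates the identity meromorphically to the full domain. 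The main obstacle in this plan is the local uniform boundedness needed to invoke Vitali; the real-axis case is clean Bohr-Mollerup, but the complex case requires this one technical estimate, which is standard but not purely algebraic.
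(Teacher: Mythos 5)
The paper does not prove this lemma at all --- it is quoted directly from \cite[Corollary 10.3.4]{AAR} --- so your proposal is being measured against the standard literature proof, which is indeed the log-convexity/Bohr--Mollerup argument going back to Askey; your overall strategy is therefore the right one, and your verifications of the functional equation $\Gamma_q(x+1) = \frac{1-q^x}{1-q}\Gamma_q(x)$, of $\Gamma_q(n+1) \to n!$, and of log-convexity (the second derivative $\frac{(\log q)^2\, q^{k+x}}{(1-q^{k+x})^2} > 0$) are all correct. The first genuine gap is that you treat Bohr--Mollerup as if it \emph{evaluates} the limit: you write ``the pointwise limit $F(x) := \lim_{q\to 1^-}\Gamma_q(x)$'' without establishing that this limit exists, and Bohr--Mollerup is only a uniqueness statement --- it identifies $F$ with $\Gamma$ \emph{given} that $F$ exists, is log-convex, and satisfies the recursion. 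The repair is standard but must be said: the values $\Gamma_q(n+1)$ are bounded uniformly in $q$ near $1$, convexity of $\log\Gamma_q$ then gives two-sided bounds (below a chord, above an extrapolated secant) hence uniform boundedness and equi-Lipschitz control on compacts of $(0,\infty)$, so along any sequence $q_n \to 1^-$ one extracts a locally uniform subsequential limit; each such limit is log-convex, satisfies $F(1)=1$ and $F(x+1)=xF(x)$ (since $(1-q^x)/(1-q)\to x$), hence equals $\Gamma$ by Bohr--Mollerup, and since all subsequential limits agree, the full limit exists.

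The second gap is in your uniform bound for the complex extension: comparing the tail of $\sum_k \log(1-q^{k+x})$ with $\sum_k q^{k+\mathrm{Re}\,x}$ cannot give a bound uniform in $q$, because $\sum_{k\geq 0} q^{k+\mathrm{Re}\,x} = q^{\mathrm{Re}\,x}/(1-q) \to \infty$ as $q \to 1^-$; indeed each of the sums $\sum_k \log(1-q^{k+1})$ and $\sum_k \log(1-q^{k+x})$ separately diverges as $q \to 1^-$, and only the pairing of the two together with the prefactor $(1-x)\log(1-q)$ produces the cancellation that keeps $\log\Gamma_q(x)$ bounded. Fortunately there is a one-line substitute: since $|1 - q^{k+x}| \geq 1 - q^{k + \mathrm{Re}\,x}$ and $|(1-q)^{1-x}| = (1-q)^{1-\mathrm{Re}\,x}$, one has $|\Gamma_q(x)| \leq \Gamma_q(\mathrm{Re}\,x)$, which reduces local uniform boundedness on compacts of the half-plane $\{\mathrm{Re}\,x > 0\}$ to the real-axis bounds already obtained above; with that estimate in hand, your invocation of Vitali's theorem and the meromorphic propagation via the functional equation go through exactly as you describe. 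With these two repairs your proposal is a complete and correct proof, essentially the one underlying the cited corollary in \cite{AAR}.
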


\begin{lemma} \label{lem:ratio-qpoch}
We have
\[
\lim_{q \to 1} \frac{(q^a; q)}{(q^b; q)} (1 - q)^{a - b} = \frac{\Gamma(b)}{\Gamma(a)}.
\]
\end{lemma}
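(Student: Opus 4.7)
The plan is to reduce Lemma \ref{lem:ratio-qpoch} directly to Lemma \ref{lem:qgamma} by taking a ratio of two instances of that asymptotic formula. Lemma \ref{lem:qgamma} tells us that for any $x \notin \{0, -1, -2, \ldots\}$,
\[
(1 - q)^{1 - x} \frac{(q; q)}{(q^x; q)} \longrightarrow \Gamma(x) \qquad \text{as } q \to 1^-.
\]
Applying this with $x = a$ and $x = b$ separately gives two limits, and the factor $(q; q)$ appears in both numerators.

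To execute the plan, I would form the ratio of the $x = a$ instance over the $x = b$ instance. The $(q; q)$ factors cancel exactly, yielding
\[
(1 - q)^{b - a} \, \frac{(q^b; q)}{(q^a; q)} \longrightarrow \frac{\Gamma(a)}{\Gamma(b)},
\]
and then inverting both sides (valid since the limit is nonzero for $a, b \notin \{0, -1, -2, \ldots\}$) gives exactly the claimed statement
\[
(1 - q)^{a - b} \, \frac{(q^a; q)}{(q^b; q)} \longrightarrow \frac{\Gamma(b)}{\Gamma(a)}.
\]

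There is no real obstacle here: the argument is a one-line manipulation of Lemma \ref{lem:qgamma}, and the only implicit hypothesis to record is that $a, b \notin \{0, -1, -2, \ldots\}$ so that the Gamma values in the denominator make sense and the limits are nonzero. Since this lemma is used in the paper only to take $\eps \to 0$ limits of Macdonald-polynomial normalizations (e.g.\ in the proofs of Lemma \ref{lem:mac-eval-lim} and Proposition \ref{prop:ho-cauchy}) where the arguments $a, b$ are positive multiples of $\theta$, this restriction is harmless in all applications.
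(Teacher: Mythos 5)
Your proposal is correct and matches the paper's own proof, which is exactly "apply Lemma \ref{lem:qgamma} twice" and take the ratio; you have simply written out the cancellation of the $(q;q)$ factors and the (reasonable) hypothesis $a, b \notin \{0, -1, -2, \ldots\}$ explicitly.
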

\begin{proof}
This follows by applying Lemma \ref{lem:qgamma} twice.
\end{proof}

\begin{lemma} \label{lem:gamma-rat}
We have 
\[
\lim_{\eps \to 0} \eps^{-\theta} \frac{\Gamma(\eps^{-1} a)}{\Gamma(\eps^{-1} a + \theta)} = a^{-\theta}.
\]
\end{lemma}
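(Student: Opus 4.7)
The plan is to reduce this to the classical ratio asymptotic
\[
\frac{\Gamma(z + \theta)}{\Gamma(z)} = z^\theta \bigl(1 + O(1/z)\bigr) \qquad (z \to \infty,\; \theta \text{ fixed}),
\]
which is a direct consequence of Stirling's formula $\log\Gamma(z) = (z - \tfrac{1}{2})\log z - z + \tfrac{1}{2}\log(2\pi) + O(1/z)$. Indeed, subtracting two such expansions gives
\[
\log\Gamma(z + \theta) - \log\Gamma(z) = \theta \log z + O(1/z)
\]
uniformly for $\theta$ in any bounded subset of $\CC$, and exponentiating yields the displayed ratio asymptotic.

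Specializing to $z = a\eps^{-1}$ (valid for any fixed $a > 0$ since $z \to \infty$ as $\eps \to 0$) then produces
\[
\frac{\Gamma(\eps^{-1}a + \theta)}{\Gamma(\eps^{-1}a)} = (\eps^{-1}a)^\theta \bigl(1 + O(\eps)\bigr) = \eps^{-\theta}\, a^\theta \bigl(1 + O(\eps)\bigr).
\]
Taking reciprocals and multiplying by $\eps^{-\theta}$ gives
\[
\eps^{-\theta}\frac{\Gamma(\eps^{-1}a)}{\Gamma(\eps^{-1}a + \theta)} = a^{-\theta}\bigl(1 + O(\eps)\bigr) \longrightarrow a^{-\theta},
\]
as desired. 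This is a routine Gamma-function asymptotic; the only point requiring mild care is that the $O(1/z)$ error in Stirling is uniform for $\theta$ in a compact set, which is standard. I anticipate no substantive obstacle.
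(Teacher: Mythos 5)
Your proposal is correct and follows essentially the same route as the paper: both derive the limit from Stirling's approximation, with the paper writing out the ratio of Stirling expansions explicitly and you packaging it as the standard asymptotic $\Gamma(z+\theta)/\Gamma(z) = z^{\theta}(1 + O(1/z))$. No substantive difference.
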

\begin{proof}
By Stirling's approximation, we have $\Gamma(z) = z^{z - 1/2} e^{-z} \sqrt{2\pi}(1 + O(z^{-1})$, hence
\[
\lim_{\eps \to 0} \eps^{-\theta} \frac{\Gamma(\eps^{-1}a)}{\Gamma(\eps^{-1}a + \theta)} = \lim_{\eps \to 0} \frac{a^{a\eps^{-1} - 1/2} \eps^{-a\eps^{-1} + 1/2} e^{-a\eps^{-1}}}{a^{a\eps^{-1} + \theta - 1/2} \eps^\theta (\eps^{-1} + \theta a^{-1})^{a \eps^{-1} - 1/2 + \theta} e^{-a \eps^{-1} - \theta}} = a^{-\theta}. \qedhere
\]
\end{proof}

\begin{lemma} \label{lem:f-lim}
For $f(u) = \frac{(tu;q)}{(qu;q)}$, $q = e^{-\eps}$, and $t = e^{-\theta \eps}$, we have 
\begin{itemize}
\item[(a)] $\lim_{\eps \to 0} f(u q^a) = (1 - u)^{1 - \theta}$

\item[(b)] $\lim_{\eps \to 0} f(q^a) \eps^{\theta - 1} = \frac{\Gamma(1 + a)}{\Gamma(\theta + a)}$.
\end{itemize}
\end{lemma}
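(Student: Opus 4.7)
The plan is to recognize that with $q = e^{-\eps}$ and $t = e^{-\theta\eps}$ we have $t = q^\theta$, so that
\[
f(u) = \frac{(q^\theta u;q)}{(q u; q)}
\]
is exactly of the shape $\frac{(q^a v; q)}{(q^b v; q)}$ to which Lemmas \ref{lem:qpoch-asymp} and \ref{lem:ratio-qpoch} apply. Both parts then follow by picking the correct $a, b, v$ and taking $\eps \to 0$.

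For part (a), I would write
\[
f(u q^a) = \frac{(q^{\theta + a} u; q)}{(q^{1 + a} u; q)},
\]
set $v(q) = u$ (a constant, hence automatically satisfying $\lim_{q\to 1} v(q) = u \in (0,1)$ on the domain where the limit is of interest), and apply Lemma \ref{lem:qpoch-asymp} with exponents $\theta + a$ and $1 + a$. This immediately gives
\[
\lim_{\eps \to 0} f(u q^a) = (1 - u)^{(1 + a) - (\theta + a)} = (1 - u)^{1 - \theta}.
\]

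For part (b), I would write
\[
f(q^a) = \frac{(q^{\theta + a}; q)}{(q^{1 + a}; q)}
\]
and apply Lemma \ref{lem:ratio-qpoch} with exponents $\theta + a$ and $1 + a$. This yields
\[
\lim_{\eps \to 0} f(q^a) (1 - q)^{(\theta + a) - (1 + a)} = \frac{\Gamma(1 + a)}{\Gamma(\theta + a)},
\]
i.e.\ $\lim_{\eps \to 0} f(q^a) (1 - e^{-\eps})^{\theta - 1} = \frac{\Gamma(1 + a)}{\Gamma(\theta + a)}$. To convert the prefactor $(1 - e^{-\eps})^{\theta - 1}$ into $\eps^{\theta - 1}$, I would use the elementary fact $1 - e^{-\eps} = \eps + O(\eps^2)$, which gives $(1 - e^{-\eps})^{\theta - 1}/\eps^{\theta - 1} \to 1$ as $\eps \to 0$; multiplying through then produces the claimed formula.

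There is no real obstacle here: the only subtle point is keeping track of the two conventions $(1 - q)^{\theta-1}$ versus $\eps^{\theta-1}$, which are interchangeable in the limit because $1 - e^{-\eps} \sim \eps$. Both subparts are one-line consequences of the appendix lemmas already proved.
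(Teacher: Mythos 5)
Your proof is correct and follows exactly the paper's route: the paper's own proof is the one-line "Apply Lemma \ref{lem:qpoch-asymp} for (a) and Lemma \ref{lem:ratio-qpoch} for (b)," which you have simply carried out with the exponents made explicit and the harmless conversion $(1-e^{-\eps})^{\theta-1}\sim\eps^{\theta-1}$ spelled out.
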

\begin{proof}
Apply Lemma \ref{lem:qpoch-asymp} for (a) and Lemma \ref{lem:ratio-qpoch} for (b).
\end{proof}

\bibliographystyle{alpha}
\bibliography{sc-bib}
\end{document}